\definecolor{col1}{rgb}{0.6, 0.7, 0.8}
\definecolor{col2}{rgb}{0.7, 0.8, 0.65}
\definecolor{col3}{rgb}{0.8, 0.9, 0.5}
\definecolor{col4}{rgb}{0.91,0.94, 0.53}
\definecolor{col5}{rgb}{0.98,0.99,0.6}
\definecolor{textcol}{rgb}{0.37,0,0.57}
\newcolumntype{C}[1]{>{\centering\arraybackslash}p{#1}}
\newcommand{\freeT}{\overrightarrow{\mathcal{T}}}
\newcommand{\freeU}{\overrightarrow{\mathcal{U}}}
\newcommand{\fR}{\overrightarrow{R}}
\newcommand{\fA}{\overrightarrow{\mathbb{A}}}
\newcommand{\fS}{\overrightarrow{\mathbb{S}}}
\newcommand{\fF}{\overrightarrow{\mathbb{F}}}
\newcommand{\fmu}{\overrightarrow{\mu}}
\newcommand{\fT}{\overrightarrow{\mathbb{T}}}
\newcommand{\ind}{{\small 1}\!\!1}
\newtheorem{thm}{Theorem}[section]
\newtheorem{lemma}[thm]{Lemma}
\newtheorem{proposition}[thm]{Proposition}
\newtheorem{conj}[thm]{Conjecture}
\theoremstyle{definition}
\newtheorem{example}[thm]{Example}
\newtheorem{rem}[thm]{Remark}
\newtheorem{que}[thm]{Question}
\definecolor{gurot}{RGB}{180,20,20}
\definecolor{jorot}{RGB}{220,20,20}
\begin{document}


\title[Fa\`a di Bruno's formula and inversion of power series]{Fa\`a di Bruno's formula and inversion of power series}

\author[S.G.G. Johnston]{Samuel G.G. Johnston}
\address{Samuel G.~G.~Johnston: Department of Mathematical Sciences, University of Bath, United Kingdom.}
\email{sggjohnston@gmail.com}

\author[J. Prochno]{Joscha Prochno}
\address{Joscha Prochno: Faculty of Computer Science and Mathematics, University of Passau, Germany.} \email{joscha.prochno@uni-passau.de}

\keywords{Rooted trees, Fa\`a di Bruno formula, power series inversion}
\subjclass[2010]{Primary: 26B05, 05C05, 13F25, 13P99, Secondary: 05A18}



\begin{abstract}
Fa\`a di Bruno's formula gives an expression for the derivatives of the composition of two real-valued functions. In this paper we prove a multivariate and synthesized version of Fa\`a di Bruno's formula in higher dimensions, providing a combinatorial expression for the derivatives of chain compositions $F^{(1)} \circ \ldots \circ F^{(m)}$ of functions $F^{(l)} : \mathbb{R}^N \to \mathbb{R}^N$ in terms of sums over labelled trees. We give several applications of this formula, including a new involution formula for the inversion of multivariate power series. We use this framework to outline a combinatorial approach to studying the invertibility of polynomial mappings, giving a purely combinatorial restatement of the Jacobian conjecture. Our methods extend naturally to the non-commutative case, where we prove a free version of Fa\`a di Bruno's formula for multivariate power series in free indeterminates, and use this formula as a tool for obtaining a new inversion formula for free power series.
\end{abstract}

\maketitle

\section{Introduction} \label{sec:introduction}

\subsection{The one dimensional case}
Before discussing our results in full generality, we give an outline of our approach by first considering the one dimensional case. Let $f,g:\mathbb{R} \to \mathbb{R}$ be smooth functions and consider the derivatives of the product $fg$. The product rule states that $D[fg] = D[f] g + f D[g]$, and by applying an induction argument it is straightforward to prove the more general Leibniz rule, which states that the $n^{\text{th}}$ derivative of $fg$ is given by 
\begin{align} \label{eq:new leibniz}
D^n [ fg ] = \sum_{ S \sqcup T = [n] } D^{ \# S } [f] D^{ \# T} [g],
\end{align}
where $[n] = \{1,\ldots,n\}$, and the sum in \eqref{eq:new leibniz} ranges over all pairs $(S,T)$ of disjoint (possibly-empty) subsets of $[n]$ satisfying $S \cup T = [n]$.

Our somewhat unconventional representation of the Leibniz rule \eqref{eq:new leibniz} as a sum over pairs of subsets is motivated by a desire to avoid using binomial coefficients. This choice is one we will make throughout the paper: wherever possible we will circumvent combinatorial coefficients by indexing every sum through a sufficiently rich collection of combinatorial objects.

Consider now the derivatives of the \emph{composition} $f \circ g$. Here the chain rule $D[f \circ g] = \left( D[f] \circ g \right) D[g]$ plays the role of the product rule, and the analogue of the Leibniz rule \eqref{eq:new leibniz} is furnished by the lesser known Fa\`a di Bruno formula
\begin{align} \label{eq:faa di bruno}
D^n[ f \circ g] = \sum_{ \pi \in \mathcal{P}_{[n]} } \left( D^{ \# \pi} [ f] \circ g  \right) ~ \prod_{ \Gamma \in \pi} D^{ \# \Gamma} [g],
\end{align}
where the sum ranges over $\mathcal{P}_{[n]}$, the collection of set partitions of $[n]$, $\# \pi$ counts the number of blocks in the partition $\pi$, and $\# \Gamma$ counts the number of elements of a block $\Gamma$. 

Suppose now $f(X) = \sum_{ k \geq 1} \frac{f_k}{k!} X^k$ and $g(X) = \sum_{ k \geq 1} \frac{g_k}{k!} X^k$ are formal power series with no constant term. Then Fa\`a di Bruno's formula \eqref{eq:faa di bruno} tells us that the $n^{\text{th}}$ coefficient of the composite power series for $(f \circ g)(X) = \sum_{ k \geq 1} \frac{(f \circ g)_k }{k!} X^k$ is given by 
\begin{align} \label{eq:faa di bruno2}
(f \circ g)_n  = \sum_{ \pi \in \mathcal{P}_{[n]} } f_{ \# \pi}  \prod_{ \Gamma \in \pi} g_{ \# \Gamma}.
\end{align}
We now show that \eqref{eq:faa di bruno2} may be applied to the problem of inverting power series. Indeed, suppose now that $f$ and $g$ are inverses of one another, in the sense that $(f \circ g )(X) = (g \circ f)(X) = X$. Then \eqref{eq:faa di bruno2} endows us with the system of equations
\begin{align} \label{eq:system1}
\sum_{ \pi \in \mathcal{P}_{[n]} } f_{ \# \pi}  \prod_{ \Gamma \in \pi} g_{ \# \Gamma} =
\begin{cases}
1 \qquad & \text{if $n = 1$},\\
0 \qquad & \text{otherwise}.
\end{cases}
\end{align}
Using this system of equations it is straightforward to give an expression for each coefficient $g_k$ in terms of the $(f_k)$ and a collection of rooted trees. To this end, let $\mathbb{S}_k$ be the set of finite rooted graph-theoretic trees $\mathcal{T}$ such that leaves are in bijection with $[k]$ and such that every non-leaf vertex has two or more children. Given such a tree $\mathcal{T}$, and a collection of indeterminates $(h_k : k \geq 2) $, define the $h$-energy of the tree $\mathcal{T}$ by 
\begin{align*}
\mathcal{E}_h( \mathcal{T} ) := \prod_{ v \in I } h_{ \# \text{children of $v$} },
\end{align*}
where the product is taken over $I$, the set of all internal vertices $v$ in the tree (those vertices which are not leaves).


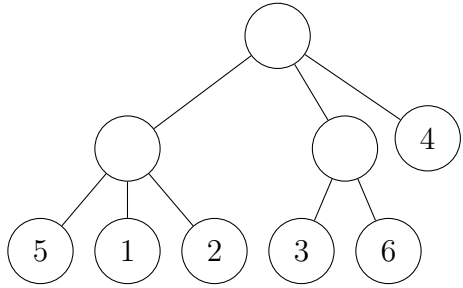
\begin{figure}[ht]
  \centering   
 \begin{forest}
[
[
[5]
[1]
[2]
]
[
[3]
[6]
]
[4]
]
\end{forest}
  \caption{An element $\mathcal{T}$ of $\mathbb{S}_6$. The $h$-energy of this tree is $\mathcal{E}_h(\mathcal{T}) = h_2 h_3^2$.}\label{fig:tree}
\end{figure}
We now use the system of equations \eqref{eq:system1} to give a sketch proof of the following result, which  states that the coefficients of a compositional inverse for a power series with coefficients $(h_k)$ may be written in terms of a sum over a set of trees of $h$-energies $\mathcal{E}_h(\mathcal{T})$. For the sake of simplicity we assume here that $f'(0) = 1$.
\begin{proposition} \label{thm:inversion1}
Let $g(X) = \sum_{ k \geq 0} \frac{ g_k }{k!} X^k$ be the formal inverse of the power series  $f(X) = X - \sum_{ k \geq 2} \frac{ h_k}{k!}  X^k$. Then $g_0 = 0$, $g_1 = 1$, and for $k \geq 2$, the coefficients of $g$ are given by
\begin{align} \label{eq:gform}
g_k = \sum_{ \mathcal{T} \in \mathbb{S}_k } \mathcal{E}_h \left( \mathcal{T} \right).
\end{align}
\end{proposition}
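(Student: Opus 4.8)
The plan is to induct on $k$ using the system of equations \eqref{eq:system1} to pin down $g_k$ recursively, and then identify the recursion with the tree sum $\sum_{\mathcal{T} \in \mathbb{S}_k} \mathcal{E}_h(\mathcal{T})$. First, set $f_1 = 1$ and $f_k = -h_k$ for $k \geq 2$, and observe that $g_0 = 0$ is forced since $f,g$ have no constant term, while the $n = 1$ case of \eqref{eq:system1} gives $f_1 g_1 = 1$, so $g_1 = 1$. For $n \geq 2$, isolate in \eqref{eq:system1} the unique partition $\pi = \{[n]\}$ consisting of a single block: its contribution is $f_n g_n + f_1 g_n \cdot (\text{correction})$... more carefully, the term where some block equals all of $[n]$ contributes $f_1 g_n$ (from $\pi$ the partition into singletons? no) — the cleanest bookkeeping is to split off the partition into $n$ singletons, which contributes $f_n \prod_{\Gamma} g_{\#\Gamma} = f_n g_1^n = -h_n$, and the partition $\{[n]\}$, which contributes $f_1 g_n = g_n$. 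Rearranging \eqref{eq:system1} then yields
\begin{align} \label{eq:gk recursion}
g_n = h_n - \sum_{\substack{\pi \in \mathcal{P}_{[n]} \\ \pi \neq \{[n]\},\, \pi \neq \{\{1\},\ldots,\{n\}\}}} f_{\#\pi} \prod_{\Gamma \in \pi} g_{\#\Gamma},
\end{align}
which expresses $g_n$ in terms of $h_n$ and products of $g_{\#\Gamma}$ with $\#\Gamma < n$ (since $\pi$ is neither the one-block nor the all-singletons partition, every such $\pi$ with $\#\pi \geq 2$ has all blocks of size $< n$, and if $\#\pi = 1$ we excluded it). This makes \eqref{eq:gk recursion} a genuine recursion. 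Actually it is cleaner to organize the recursion by expanding each $g_{\#\Gamma}$ again and tracking that $f_{\#\pi}$ with $\#\pi \geq 2$ carries a factor $-h_{\#\pi}$; the upshot is a recursion of the form $g_n = h_n + \sum_{j \geq 2} h_j \sum (\text{products of } g\text{'s over a } j\text{-block composition of a partition})$.

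Next I would set up the tree side. Write $T_k := \sum_{\mathcal{T} \in \mathbb{S}_k} \mathcal{E}_h(\mathcal{T})$; clearly $T_1 = 1$ (the single-vertex tree, empty product) and $T_2 = h_2$. For $k \geq 2$, decompose a tree $\mathcal{T} \in \mathbb{S}_k$ according to the root: the root has some number $j \geq 2$ of children, and deleting the root splits $\mathcal{T}$ into $j$ subtrees hanging off these children, inducing an (ordered-into-unordered) partition of the leaf set $[k]$ into $j$ nonempty blocks $\Gamma_1, \ldots, \Gamma_j$, with the subtree on block $\Gamma_i$ being an element of $\mathbb{S}_{\#\Gamma_i}$ after relabelling — except that a subtree consisting of a single leaf is the trivial tree in $\mathbb{S}_1$, which is exactly why the $\mathbb{S}_1$ convention with $T_1 = 1$ is the right one. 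Since $\mathcal{E}_h(\mathcal{T}) = h_j \prod_{i=1}^j \mathcal{E}_h(\mathcal{T}_i)$, summing gives
\begin{align} \label{eq:Tk recursion}
T_k = \sum_{j \geq 2} h_j \sum_{\substack{\pi \in \mathcal{P}_{[k]} \\ \#\pi = j}} \prod_{\Gamma \in \pi} T_{\#\Gamma}, \qquad k \geq 2,
\end{align}
with $T_1 = 1$. The main work is then to check that \eqref{eq:gk recursion} and \eqref{eq:Tk recursion} are the same recursion: one rewrites the right-hand side of \eqref{eq:gk recursion} by noting $f_{\#\pi} = -h_{\#\pi}$ for $\#\pi \geq 2$ and $f_{\#\pi} = 1$ for $\#\pi = 1$, carefully matching signs and the excluded partitions, so that the $-h_n$ and the exclusion of $\{[n]\}$ in \eqref{eq:gk recursion} conspire to produce exactly \eqref{eq:Tk recursion}. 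Then induction on $k$ (base case $k = 1$, or $k=2$) gives $g_k = T_k$ for all $k$.

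The step I expect to be the main obstacle is the bookkeeping in passing from \eqref{eq:system1} to a clean recursion that literally matches \eqref{eq:Tk recursion} — in particular getting the signs to work out (the $f_k = -h_k$ for $k \geq 2$ must cancel against the sign hidden in moving terms across \eqref{eq:system1}) and correctly handling the two "degenerate" partitions (the single block, which isolates $g_n$, and the all-singletons block, which produces the lone $h_n$). A clean way to sidestep some of this is to argue at the level of generating functions / the tree side directly: verify that $g(X) := X + \sum_{k \geq 2}\frac{T_k}{k!}X^k$ satisfies $f(g(X)) = X$ by plugging \eqref{eq:Tk recursion} into Fa\`a di Bruno's formula \eqref{eq:faa di bruno2} for the composition $f \circ g$ and checking the coefficients vanish for $n \geq 2$; since compositional inverses are unique this identifies $g$ with the formal inverse of $f$. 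Either route reduces to the same combinatorial identity, so I would present whichever makes the partition-vs-tree correspondence most transparent, likely the direct recursion route with \eqref{eq:Tk recursion} derived first.
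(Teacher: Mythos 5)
Your proposal is correct and takes essentially the same route as the paper's sketch proof: using the system \eqref{eq:system1} with $f_1 = 1$, $f_j = -h_j$ ($j \geq 2$), isolating the one-block partition to obtain the recursion $g_n = \sum_{\#\pi \geq 2} h_{\#\pi} \prod_{\Gamma \in \pi} g_{\#\Gamma}$, and matching this with the decomposition of a proper tree in $\mathbb{S}_n$ at its root via the inductive expansion of each $g_{\#\Gamma}$. Your separate treatment of the all-singletons partition and the explicit recursion for $T_k$ are just a more detailed bookkeeping of the same gluing argument, so no substantive difference.
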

\begin{proof}
It is immediate that $g_0 = 0$, and the fact that $g_1 = 1$ follows from the chain rule. The proof of formula \eqref{eq:gform} for $k \geq 2$ is established by an inductive argument using the system \eqref{eq:system1}. Indeed, plugging $ n = k+1$ in \eqref{eq:system1}, using the fact that $f_1 = 1$ and $f_j = - h_j$ for $j \geq 2$, we obtain
\begin{align*}
g_{k+1} =  \sum_{ \pi \in \mathcal{P}_{k+1 } : \# \pi  \geq 2 } h_{ \# \pi } \prod_{ \Gamma \in \pi} g_{ \# \Gamma}.
\end{align*}
Using the inductive hypothesis to expand each term $g_{ \# \Gamma}$ in the product as a tree, by thinking of each term in this product as a subtree of a tree in which the root has $\# \pi$ children,  each term in the sum on the right-hand side now corresponds to $\mathcal{E}_h(\mathcal{T})$ for a tree $\mathcal{T}$ in $\mathbb{S}^{k+1}$. 
\end{proof}

While the contents of the discussion above are well known, with Proposition \ref{thm:inversion1} following as a fairly straightforward consequence of the single-variable Lagrange inversion formula, the purpose of the present article is to expand this discussion to multivariate functions and power series in both commutative and non-commutative variables, and supply formulas for the composition and inversion of these functions. We now overview both of these cases.

\subsection{The commutative case} The main commutative result in this article is Theorem \ref{thm:main}, a generalisation of Fa\`a di Bruno's formula \eqref{eq:faa di bruno} giving a combinatorial expression for the higher derivatives 
\begin{align} \label{eq:chain}
D^\alpha \left[ F^{(1)} \circ \ldots \circ F^{(m)} \right],
\end{align}
where $F^{(l)}:\mathbb{R}^N \to \mathbb{R}^N$ are smooth functions and $\alpha$ is an arbitrary multi-index. The expression for \eqref{eq:chain} is given in terms of a sum over labelled trees with generations $0$ through $m$: the number of leaves in each of these trees is equal to degree of the multi-index $\alpha$, and every leaf lies in generation $m$. Figure \ref{fig:treefinalex} gives an example of one of these trees.

\begin{figure}[ht!]
  \centering   
 \begin{forest}
[2, fill=col2, name=g01,
[1,  fill=col1, name=g11,
[1, fill=col1, name=g21,
[{2,1}, fill=col2, name=g31,]
[{3,1},fill=col3, name=g32,]
]
]
[3,fill=col3, name=g12,
[1,fill=col1, name=g22,
[{3,2},  fill=col3, name=g33,]
]
[5,fill=col5, name=g23,
[{1,1},  fill=col1, name=g34,]
]
]
[4, fill=col4, name=g13,
[4, fill=col4, name=g24,
[{1,2},  fill=col1, name=g35,]
]
]
]
   \node (a) [right=of g01 -| g24.east] {Generation $0$};
    \node (b) [right=of g13,] {Generation $1$};
   \node (c) [right=of g24,]       {Generation $2$};
   \node (d) [right=of g35,]       {Generation $3$};
    \draw[dashed,] (g01) -- (a) ;
   \draw[dashed,] (g13) -- (b);
   \draw[dashed,] (g24) -- (c);
   \draw[dashed,] (g35) -- (d);
\draw[dashed,] (g11) -- (g12);
\draw[dashed,] (g12) -- (g13);
\draw[dashed,] (g21) -- (g22);
\draw[dashed,] (g22) -- (g23);
\draw[dashed,] (g23) -- (g24);
\draw[dashed,] (g34) -- (g35);
\draw[dashed,] (g33) -- (g34);
\draw[dashed,] (g32) -- (g33);
\draw[dashed,] (g31) -- (g32);
\end{forest}
  \caption{The fifth order partial derivatives of a three-fold composition $F \circ G \circ H$ are given in terms of a sum over labelled trees with $3+1$ generations and $5$ leaves. }\label{fig:treefinalex}
\end{figure}
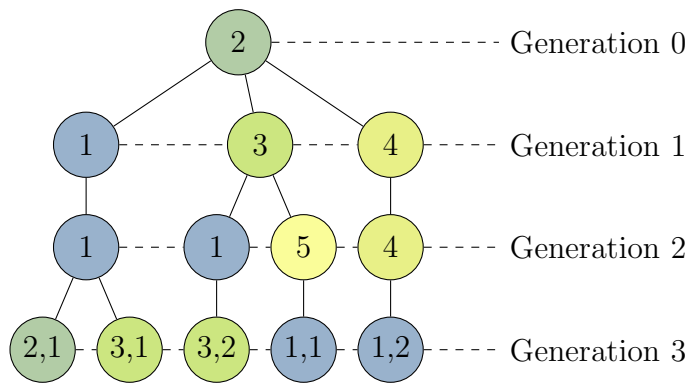

We present several combinatorial applications of Theorem \ref{thm:main}, including most importantly a new inversion formula for formal power series. Indeed, suppose that $F = (F_1,\ldots,F_N)$ is an $N$-dimensional formal power series with $i^{\text{th}}$ component
\begin{align*}
F_i(X) = \sum_{ \alpha \in \mathbb{Z}_{ \geq 0}^N } \frac{F_{i,\alpha}}{ \alpha!} X^\alpha,
\end{align*}
where $F_{i,\alpha}$ take values in some ring. In Theorem \ref{thm:general inversion} we show that the coefficients of the compositional formal inverse power series $G = (G_1,\ldots,G_N)$ may be given in terms of a sum over labelled trees of energies depending on the coefficients $(F_{i,\alpha})$ --- provided the linear term of $F$ is an invertible matrix. The special case of our result where the linear term of $F$ is identical appears implicitly in Haiman and Schmitt \cite{HS}. Roughly speaking, we find that when the linear term is non-identical, it interlaces the energy function of the tree. In Section \ref{sec:JC} we conclude our discussion of commutative power series by applying our framework to outline a combinatorial path to proving the Jacobian conjecture, building heavily on the work of Wright \cite{wright1,wright2,wright3,wright4} and Singer \cite{singer1,singer2,singer3}.

\subsection{The non-commutative case}
We then turn to looking at the non-commutative case, where the methods above are naturally adapted to studying composition and compositional inversion of formal power series in free variables $X_1,\ldots,X_N$ which are not assumed to commute (we use the term `free' throughout to mean `non-commutative'). In essence, our results in the commutative case extend to the free case via the rule of thumb \emph{``free variables means planar trees"}. 

Our first result in this direction is a free version of the multi-dimensional Fa\`a di Bruno formula, suitable for understanding compositions of formal expressions such as
\begin{align*}
f( X_1 , X_2 , X_3)  = X_1 X_2 X_1 + X_2 X_3 + 4 X_3 X_2,
\end{align*}
where $X_i$ are free variables. We permit such expressions to have infinitely many terms, and call such objects \emph{free formal power series}.

In Theorem \ref{thm:free fdb} we find that if $F^{(1)} ,\ldots,F^{(m)}$ are free formal power series in $N$ variables such that each $F^{(l)}$ has $N$ components $(F^{(l)}_1,\ldots,F^{(l)}_N)$, then the coefficient of a term $X_{i_1} \ldots X_{i_k}$ may be given in terms of a sum over labelled \emph{planar} trees with $m$ generations and $k$ leaves. By a \emph{planar} tree, we refer to a rooted tree in which the children of every internal vertex are ranked from left to right.

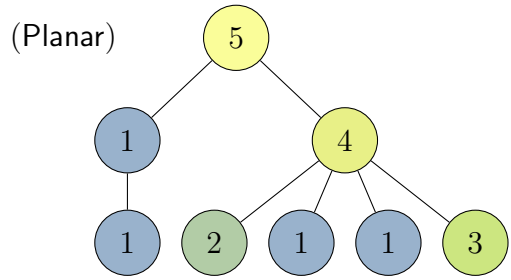
\begin{figure}[ht!]
  \centering   
 \begin{forest}
[5, fill=col5, name=L1,
[1,  fill=col1
[1, fill=col1, name=A]
]
[4,fill=col4
[2,  fill=col2]
[1,  fill=col1]
[1,fill=col1]
[3,  fill=col3, name=B]
]
]
\node (a) [left=of L1] {($\mathsf{Planar}$)};
\end{forest}
  \caption{The coefficient of $X_1 X_2 X_1 X_1 X_3$ in the fifth component of the composition $F \circ G$ may be given in terms of a sum over planar trees with two generations and leaves of types $1,2,1,1,3$ from left to right.}\label{fig:quick free tree}
\end{figure}

Consider now the compositional inversion of free power series. Our final result, Theorem \ref{thm:free inversion}, states that like the commutative case, a mapping $F$ in non-commutative variables has a left-inverse $G$ if and only if its linear term is invertible as matrix with entries in the coefficient ring, and that in this case, the mapping $G$ is also a right-inverse. Furthermore, Theorem \ref{thm:free inversion} gives an explicit formula for each inverse series $G_i$ in terms of rooted planar trees.

\subsection{Structure of the paper}

The remainder of the paper is structured as follows:
\begin{itemize}
\item In Section \ref{sec:results} we give full statements of our main results in the commutative case. This amounts to our multivariate generalisation of Fa\`a di Bruno's formula, Theorem \ref{thm:main}, our inversion formulas for multivariate power series, Theorems \ref{thm:inversion} and \ref{thm:general inversion}, and a new combinatorial statement for the Jacobian conjecture, Conjecture \ref{JC3}. 

\item In Section \ref{sec:free} we give statements of our results in the non-commutative case. These results consist of Theorem \ref{thm:free fdb} --- our free Fa\`a di Bruno formula for multivariate power series in $N$ non-commuting indeterminates --- as well as an inversion formula for such series,  Theorem \ref{thm:free inversion}.

\item  In Section \ref{sec:applications} we collect together several applications of Fa\`a di Bruno's formula and the power series inversion formulas, including a representation for the Hermite polynomials in terms of partitions, as well as a new formula for the reciprocal of a power series. Finally, we give a brief overview of recent appearances of Fa\`a di Bruno's formula in the genealogical structure of a class of random trees known as Galton-Watson trees.

\item The remaining sections are dedicated to giving proofs of the results stated in Sections \ref{sec:results} and \ref{sec:free}: in Section \ref{sec:fdbproof} we prove our generalisation of the classical Fa\`a di Bruno formula, Theorem \ref{thm:main}, in Section \ref{sec:lagrangeproof} we prove our power series inversion formulas, Theorem \ref{thm:inversion} and Theorem \ref{thm:general inversion}, and in the final section, Section \ref{sec:free proofs}, we prove Theorems \ref{thm:free fdb} and \ref{thm:free inversion} which give statements about the composition and inversion of power series in non-commuting variables.
\end{itemize}

\section{The commutative case} \label{sec:results}
We shall now present the main results and contributions of this article in the commutative case. In order to begin discussing the higher dimensional setting, we first need to introduce some additional notation.

\subsection{Multi-indices and the higher dimensional Leibniz rule}
Recall that $[N] := \{1,\ldots,N\}$. Whenever $S$ is a finite subset of $[N] \times \mathbb{Z}_{\geq 0}$, let $\# S$ be the multi-index whose $i^{\text{th}}$ component counts the number of elements of the form $(i,a)$ in $S$. Now given a multi-index $\alpha \in \mathbb{Z}_{ \geq 0}^N$, define
\begin{align*}
[\alpha] := \left \{ (i, a ) \in \mathbb{Z}^2 : i \in [N], 1 \leq a \leq \alpha_ i \right\} \subseteq [N] \times \mathbb{Z}_{ \geq 0}.
\end{align*}
 Plainly $\# [\alpha] = \alpha$. Finally, let $\left( \mathbf{e}_i \right)_{ i \in [N] }$ be the standard basis for $\mathbb{Z}_{ \geq 0}^N$.

We are now equipped to provide a high-dimensional version of the Leibniz rule, namely for functions $f^{(1)}, f^{(2)}, \ldots ,f^{(m)}:\mathbb{R}^N \to \mathbb{R}$ the higher derivatives of their product $f^{(1)} f^{(2)} \ldots f^{(m)}$ are given by 
\begin{align} \label{eq:leib2}
D^\alpha \left[ f^{(1)}  f^{(2)} \ldots f^{(m)}   \right] = \sum_{ S_1 \sqcup \ldots \sqcup S_m = [\alpha] } \prod_{ l = 1}^m D^{ \# S_i } [f^{(i)} ],
\end{align}
where the sum ranges over all $m$-tuples $(S_1,\ldots,S_m)$ of disjoint (and possibly empty) subsets of $[\alpha]$ whose union is equal to $\alpha$. The expression \eqref{eq:leib2} is easily proved by induction.

With a view to stating Theorem \ref{thm:main}, which gives an expression analogous to \eqref{eq:leib2} for higher derivatives $D^\alpha \left[ F^{(1)} \circ \ldots \circ F^{(m)} \right]$ of chain compositions, in the next section we introduced labelled rooted trees.

\subsection{Labelled rooted trees} \label{sec:trees}
 We say a finite graph $(V,E)$ is a rooted tree if it is a finite tree with a designated root vertex $v_0$. For $k \in \mathbb{Z}_{ \geq 0}$, let $V_k$ denote the set of vertices whose graph distance from the root is $k$. We refer to $V_k$ as generation $k$, and note that $V_0 = \{ v_0 \}$. Each vertex $w$ in $V_{k+1}$ has a unique parent vertex $v$ in $V_{k}$, and in this case we say $w$ is a child of $v$. We say a vertex is a leaf if it has no children, and we say it is internal if it has at least one child. The vertex set of a rooted tree has a decomposition into a disjoint union $V = L \sqcup I$, where $L$ are the leaves of the tree and the set $I$ consists of the internal vertices of the tree. 

For $i \in [N]$ and a finite subset $S$ of $[N] \times \mathbb{Z}_{ \geq 0}$ let $\mathbb{T}_{i,S}$ to be the set of labelled trees $\mathcal{T}$ with root type $i$ and leaf types $S$. More specifically, $\mathbb{T}_{i,S}$ is the set of quadruplets $\mathcal{T} := (V,E,\tau,\phi)$ with the following properties:
\begin{itemize}
\item The pair $(V,E)$ denotes a rooted tree.
\item The labelling function $\phi:L \to S$ is a bijection between the leaves of the tree $L$ and the set $S$, giving each  each leaf a \emph{label} in $S$.
\item The typing function $\tau:V \to [N]$ gives each vertex of the tree a \emph{type} in $[N]$ according to certain rules: it gives the root type $i$, and the type given to each leaf respects the label of that leaf, in the sense that that whenever $\phi(v) = (j,a)$ for some $a \in \{1,2,\ldots\}$, we have $\tau(v) = j$. There are no constraints on the types of non-root internal vertices.
\end{itemize}

\begin{figure}[ht!]
  \centering   
 \begin{forest}
[1, fill=col1
[1,  fill=col1
[{4,3}, fill=col4]
[{3,4},fill=col3]
]
[3,fill=col3
[{3,1},  fill=col3]
[{1,1},  fill=col1]
]
[4, fill=col4
[{1,2},  fill=col1]
]
]
\end{forest}
  \caption{An element $\mathbb{F}_{i,S}(2)$ where $i = 1$ and $S = \{ (1,1), (1,2), (3,1),(3,4),(4,3)\}$. A vertex labelled $(k,a)$ has type $k$. The colours of the vertices correspond to their types. Note that every leaf lies in the final generation.}\label{fig:treefinal}
\end{figure}
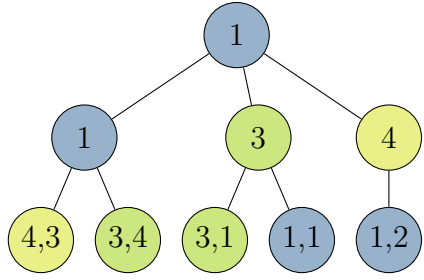

We say two labelled trees  $(V,E,\tau,\phi)$ and $(V',E',\tau',\phi')$ are \emph{isomorphic} if there is a bijection $\Psi:V \to V'$ between the underlying vertex sets preserving all of the structure of the tree: $\left( \Psi(u), \Psi(v) \right) \in E'$ if and only if $(u,v) \in E$, $\tau'(\Psi(u)) = \tau(u)$ and $\phi'(\Psi(u) ) = \phi(u)$. Whenever we speak of a set or collection of trees, technically we mean a set or collection of isomorphism classes according to this equivalence.

So in summary, for subsets $S$ of $[N] \times \{1,2,\ldots \}$, $\mathbb{T}_{i,S}$ is the collection of (isomorphism classes of) labelled trees with root type $i$ and whose leaves are in bijection with $S$. We will consider several subsets of $\mathbb{T}_{i,S}$:
\begin{itemize}
\item The set of \emph{final} trees of length $m$, $\mathbb{F}_{i,S}(m)$: those trees in $\mathbb{T}_{i,S}$ such that every leaf is contained in generation $m$. The tree in Figure \ref{fig:treefinal} is a final tree of length $2$.
\item The set of \emph{proper} trees $\mathbb{S}_{i,S}$: those trees in $\mathbb{T}_{i,S}$ such that every internal vertex has two or more children.
\item The set of \emph{alternating} trees $\mathbb{A}_{i,S}$: the subset of $\mathbb{T}_{i,S}$ consisting of trees such that every vertex in some $V_{2k}$ has exactly one child, and every vertex in some $V_{2k+1}$ has no children or two or more children. In particular, every leaf of an alternating tree lies in some $V_{2k+1}$. See e.g. Figure \ref{fig:treealternating}
\end{itemize}

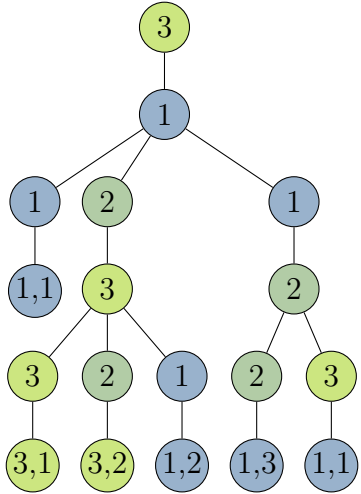
\begin{figure}[ht!]
  \centering   
 \begin{forest}
   for tree={circle, draw, 
            minimum size=1.61em, 
            inner sep=0.7pt} 
[3, fill=col3 
[1, fill=col1
[1, fill=col1
[{1,1},  fill=col1] 
]
[2, fill=col2
[3, fill=col3
[3, fill=col3
[{3,1}, fill=col3]
]
[2,fill=col2
[{3,2}, fill=col3]
]
[1,  fill=col1
[{1,2},  fill=col1]
]
]
] 
[1,  fill=col1
[2, fill=col2
[2, fill=col2
[{1,3}, fill=col1]
]
[3, fill=col3
[{1,1}, fill=col1]
]
]
]
]
]
\end{forest}
  \caption{An alternating tree in $A_{i,S}$ with root type $i = 3$, and leaf labelling set $S = \{ (1,1),(1,2),(1,3), (3,1),(3,2)\} = [3 \mathbf{e}_1 + 2 \mathbf{e}_3]$. Note that every vertex in an even generation has exactly one child.}\label{fig:treealternating}
\end{figure}

To lighten notation we write $\mathbb{T}_{i,\alpha} := \mathbb{T}_{i,[\alpha]}$, and we do similarly with $\mathbb{F}_{i,\alpha}(m)$, $\mathbb{S}_{i,\alpha}$ and $\mathbb{A}_{i,\alpha}$. Final trees $\mathbb{F}_{i,\alpha}(m)$ will appear in our generalisation of Fa\`a di Bruno's formula, specifically for the $\alpha^{\text{th}}$ derivative of the $i^{\text{th}}$ component of $F^{(1)} \circ \ldots \circ F^{(m)}$. The proper trees $\mathbb{S}_{i,\alpha}$ will appear in our inversion formula for power series with identity linear terms. The unusual looking alternating trees $\mathbb{A}_{i,\alpha}$ are used in our inversion formula for power series with non-identical linear terms.

For an internal vertex $v$ the outdegree $\mu(v)$ of $v$ is defined to be the multi-index whose $i^{\text{th}}$ component is given by 
\begin{align*}
\mu(v)_i := \# \{ w \in V : \text{$w$ is a child of $v$ and $\tau(w) = i$} \}.
\end{align*}

\begin{figure}[ht!]
  \centering   
 \begin{forest}
[1, fill=col1
[{3,1}, fill=col3]
[4, fill=col4]
[4, fill=col4]
]
\end{forest}
  \caption{An internal vertex $v$ with type $\tau(v) = 1$ and outdegree $\mu(v) = \mathbf{e}_3 + 2 \mathbf{e}_4$. }\label{fig:v}
\end{figure}
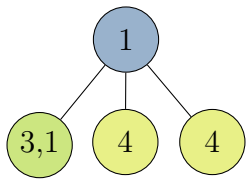

All of our formulas in the commutative case for the composition and inversion of functions take the form $\sum_{ \mathcal{T} \in \mathcal{C} } \mathcal{E} (\mathcal{T})$, where $\mathcal{C}$ is some collection of labelled trees, and the energy has the form $\mathcal{E}(\mathcal{T}) = \prod_{v \in I} K ( \tau(v), \mu(v) )$ for some function $K$ depending on the type of the vertex and its outdegree.

We are now equipped to discuss the specific cases in the next two sections.

\subsection{Generalisation of Fa\`a di Bruno's formula}
Let $F^* := (F^{(1)},\ldots,F^{(m)})$ be an $m$-tuple of smooth functions from $\mathbb{R}^N$ to $\mathbb{R}^N$. 
We now give a formula for the higher derivatives of their composition in terms of the derivatives of of each individual $F^{(l)}$ and a sum over a collection of trees. 

Whenever $F:\mathbb{R}^N \to \mathbb{R}^N$, let $D_i^\alpha[F]:\mathbb{R}^N \to \mathbb{R}$ denote the $\alpha^{\text{th}}$ derivative of the $i^{\text{th}}$ component. Given a labelled tree $\mathcal{T} \in \mathbb{F}_{i,\alpha}(m)$ define the energy function $\mathcal{E}_{F^*}(\mathcal{T}): \mathbb{R}^N \to \mathbb{R}$ by 
\begin{align*}
\mathcal{E}_{F^*}(\mathcal{T}) := \prod_{ l = 1}^{m} \prod_{ v \in V_{l-1} } D^{ \mu(v) }_{ \tau(v)} [F^{(l)} ]  \circ F^{(l+1)} \circ \ldots \circ F^{(m)}.
\end{align*}
Suppose now $F^* = (F.G)$, and consider the tree $\mathcal{T}$ in $\mathbb{F}_{5,\alpha}(2)$ in Figure \ref{fig:treefinal}. For this tree we have 
\begin{align*}
\mathcal{E}_{F^*}(\mathcal{T} ) = \left( D_{5}^{ \mathbf{e}_1 + \mathbf{e}_3 +  \mathbf{e}_4 } [F] \circ G \right) D_1^{\mathbf{e}_3 + \mathbf{e}_4} [G] D_3^{\mathbf{e}_1 + \mathbf{e}_3} [G] D_4^{\mathbf{e}_1}[G] .
\end{align*}
We are now equipped to state our main result, Theorem \ref{thm:main}, which states that the higher derivatives of chain compositions may be given in terms of a sum over tree energies.
\begin{thm} \label{thm:main}
The function $D_i^\alpha[ F^{(1)} \circ \ldots \circ F^{(m)} ]:\mathbb{R}^N \to \mathbb{R}$ is given by 
\begin{align*}
D_i^\alpha [ F^{(1)} \circ \ldots \circ F^{(m)} ](s) = \sum_{ \mathcal{T} \in \mathbb{F}_{i,\alpha}(m) } \mathcal{E}_{F^*}(\mathcal{T})(s).
\end{align*}
\end{thm}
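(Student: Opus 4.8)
The plan is to prove the identity by induction on the length $m$ of the chain, peeling off the outermost map $F^{(1)}$; throughout we take $|\alpha|\ge1$. The base case $m=1$ is immediate: every leaf of a tree in $\mathbb F_{i,\alpha}(1)$ must lie in generation $1$, so such a tree is forced to be a single root of type $i$ carrying $\alpha_j$ leaf-children of type $j$ for each $j\in[N]$; it is unique up to isomorphism, its root has outdegree $\alpha$, and hence its energy is exactly $D^\alpha_i[F^{(1)}]$. For the inductive step one writes $F^{(1)}\circ\cdots\circ F^{(m)}=F^{(1)}\circ G$ with $G:=F^{(2)}\circ\cdots\circ F^{(m)}$ and combines two ingredients: a one-step (single-composition) version of the formula for $D^\alpha_i[F^{(1)}\circ G]$, and a combinatorial decomposition of $\mathbb F_{i,\alpha}(m)$ compatible with it.

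The first ingredient is the one-step lemma: for smooth $H,G:\R^N\to\R^N$,
\begin{align*}
D^\alpha_i[H\circ G]=\sum_{\pi\in\mathcal P_{[\alpha]}}\ \sum_{c:\pi\to[N]}\Bigl(D^{\beta(\pi,c)}_i[H]\circ G\Bigr)\prod_{B\in\pi}D^{\#B}_{c(B)}[G],
\end{align*}
where $\pi$ runs over set partitions of $[\alpha]$, $c$ over colourings of its blocks by $[N]$, and $\beta(\pi,c)\in\Z_{\ge0}^N$ is the multi-index whose $k$-th entry counts the blocks coloured $k$. I would prove this by induction on $|\alpha|$ using nothing more than the ordinary chain rule $D_p[u\circ G]=\sum_k(D_ku\circ G)D_p[G_k]$: going from $\alpha$ to $\alpha+\mathbf{e}_p$, the product rule sends $D_p$ either onto the factor $D^{\beta(\pi,c)}_i[H]\circ G$, which (summing over $k$) produces exactly the terms in which a fresh singleton block $\{(p,\alpha_p+1)\}$ of colour $k$ has been adjoined, or onto one of the factors $D^{\#B}_{c(B)}[G]$, which produces the term in which $(p,\alpha_p+1)$ has been inserted into $B$. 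Since every (partition, colouring) pair of $[\alpha+\mathbf{e}_p]$ arises once in exactly one of these two ways, the right-hand side obeys the same recursion as the left; the case $|\alpha|=1$ is a bare chain rule. (When $N=1$ this is \eqref{eq:faa di bruno}.)

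The second ingredient is a decomposition of $\mathbb F_{i,\alpha}(m)$ for $m\ge2$: removing the root $v_0$ of a tree $\mathcal T\in\mathbb F_{i,\alpha}(m)$ leaves, below each child $w$ of $v_0$, a tree $\mathcal T_w\in\mathbb F_{\tau(w),S_w}(m-1)$, where the leaf-label sets $S_w$ form a partition $\pi$ of $[\alpha]$ into nonempty blocks; recording the colouring $c(S_w):=\tau(w)$, the map $\mathcal T\mapsto(\pi,c,(\mathcal T_B)_{B\in\pi})$ is a bijection of isomorphism classes (the blocks of $\pi$ are distinct subsets of $[\alpha]$, so there are no symmetries to account for), under which $\mu(v_0)=\beta(\pi,c)$ and the energy factorises as
\begin{align*}
\mathcal E_{F^*}(\mathcal T)=\Bigl(D^{\beta(\pi,c)}_i[F^{(1)}]\circ G\Bigr)\prod_{B\in\pi}\mathcal E_{(F^{(2)},\dots,F^{(m)})}(\mathcal T_B).
\end{align*}
Summing over $\mathbb F_{i,\alpha}(m)$, pushing the sum through the product over $\pi$, and using the inductive hypothesis --- Theorem \ref{thm:main} for the length-$(m-1)$ chain $F^{(2)}\circ\cdots\circ F^{(m)}$, applied with the nonzero multi-indices $\#B$ --- to replace each block-sum $\sum_{\mathcal T_B\in\mathbb F_{c(B),B}(m-1)}\mathcal E_{(F^{(2)},\dots,F^{(m)})}(\mathcal T_B)$ by $D^{\#B}_{c(B)}[G]$, turns $\sum_{\mathcal T\in\mathbb F_{i,\alpha}(m)}\mathcal E_{F^*}(\mathcal T)$ into exactly the right-hand side of the one-step lemma with $H=F^{(1)}$, i.e.\ into $D^\alpha_i[F^{(1)}\circ G]=D^\alpha_i[F^{(1)}\circ\cdots\circ F^{(m)}]$. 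This closes the induction.

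I expect the main obstacle to be bookkeeping rather than genuine difficulty: one must check that the two moves in the one-step induction enumerate the (partition, colouring) pairs of $[\alpha+\mathbf{e}_p]$ without omission or repetition, and --- more fiddly --- that the tree-surgery above is an honest bijection on isomorphism classes and that the composition tails $F^{(l+1)}\circ\cdots\circ F^{(m)}$ appearing in the energy reindex correctly when the root is stripped away. A variant proof by a single induction on $|\alpha|$ for all $m$ at once (differentiate $\sum_{\mathcal T}\mathcal E_{F^*}(\mathcal T)$ term by term and match $D_p$ to either attaching a new leaf in generation $m$, or attaching a pendant path down to generation $m$ when the differentiated factor sits at a vertex in an earlier generation) is also available, but it packages the same bookkeeping less transparently.
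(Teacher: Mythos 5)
Your proposal is correct and follows essentially the same route as the paper: the one-step case is proved by induction on $|\alpha|$ using the chain and product rules (your two moves for $[\alpha+\mathbf{e}_p]$ are exactly the paper's $A_k(\mathcal{T})$ and $B_v(\mathcal{T})$ terms), and the general case by induction on $m$, peeling off $F^{(1)}$ and matching the root-decomposition of trees in $\mathbb{F}_{i,\alpha}(m)$ against the one-step formula. The only cosmetic difference is that you phrase the one-step lemma in coloured-partition language rather than via two-generation trees, an equivalence the paper itself records in its discussion of Haiman and Schmitt.
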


\begin{rem} \label{rem:main}
A formula for the case where $F^{(l)} : \mathbb{R}^{N_l} \to \mathbb{R}^{N_{l+1}}$ are maps between Euclidean spaces of different dimensions is easily obtained from Theorem \ref{thm:main} by means of an embedding argument. Indeed, choose $N$ large enough so that $N \geq N_l$ for all $l$, and then canonically associate each $F^{(l)}$ with $\hat{F}^{(l)} : \mathbb{R}^N \to \mathbb{R}^N$, where $\hat{F}^{(l)}$ is independent of the final $N-N_l$ variables, and the final $N-N_{l+1}$ of $\hat{F}^{(l)}$ components are zero. Applying Theorem \ref{thm:main} to the chain $\hat{F}^{(1)} \circ \ldots \circ \hat{F}^{(m)}$, this procedure tells us that
\begin{align*}
D_i^\alpha \left[ F^{(1)} \circ \ldots \circ F^{(m)} \right] (s) = \sum_{ \mathcal{T} \in \mathbb{F}_{i,\alpha}(m) } \mathcal{E}_{F^*}(\mathcal{T})(s),
\end{align*}
where the sum is supported only on trees where the type function $\tau$ is restricted so that for each $\ell$, the vertices in the $\ell^{\text{th}}$ generation $V_l$ may only have types in $\{1,\ldots,N_\ell\} \subset [N]$. 
\end{rem}

\vspace{8mm}

Theorem \ref{thm:main} reduces significantly when each $F^{(l)}$ is assumed to be a power series with zero constant term and we evaluate the result at the origin. Namely suppose the $i^{\text{th}}$ component of each $F^{(l)}$ is given by
\begin{align*}
F^{(l)}_i(X_1,\ldots,X_N) = \sum_{ \alpha \neq 0} \frac{ F^{(l)}_{i,\alpha}}{ \alpha!} X^\alpha.
\end{align*}
Then by setting $X = 0 $ in Theorem \ref{thm:main} we see that the coefficient of $X^\alpha$ in the $i^{\text{th}}$ component of the composition power series $F^{(1)} \circ \ldots \circ F^{(m)}$ is given by $\frac{1}{\alpha!} (F^{(1)} \circ \ldots \circ F^{(m)} )_{i, \alpha}$, where
\begin{align} \label{eq:fdbpower}
 (F^{(1)} \circ \ldots \circ F^{(m)} )_{i, \alpha} := \sum_{ \mathcal{T} \in \mathbb{F}_{i,\alpha}(m) } \prod_{ l = 1}^{m} \prod_{v \in V_{l-1}} F^{(l)}_{\tau(v),\mu(v)}.
\end{align}
We now take a brief look at the special case $m=2$, for which some results have appeared in the literature. Setting $ m=2$ in \eqref{eq:fdbpower} we obtain
\begin{align} \label{eq:fdbpower2}
(F \circ G)_{i, \alpha} = \sum_{ \mathcal{T} \in \mathbb{F}_{i,\alpha}(2)} F_{i, \mu(v_0) } \prod_{ v \in V_1} G_{ \tau(v) , \mu(v) }.
\end{align}
The formula \eqref{eq:fdbpower2} essentially appears in Haiman and Schmitt \cite{HS}, who give their formula in terms of what they call a \emph{multicolor partition}. Taking a moment to sketch this idea here, each tree $\mathbb{F}_{i,\alpha}(2)$ of length $2$ induces a partition $\pi$ of $[\alpha]$, by letting the blocks of $\pi$ correspond to the vertices in the middle generation $V_1$: we say two elements $(i,j)$ and $(i',j')$ of $[\alpha]$ are in the same block of the partition if the leaves with these labels are children of the same vertex in $V_1$. The types of the vertices in $V_1$ then induces a function $\tau:\pi \to [N]$. Appealing to this correspondence we may rewrite \eqref{eq:fdbpower2} as
\begin{align*} 
(F \circ G)_{i, \alpha } = \sum_{ \pi \in \mathcal{P}_{[\alpha]}} \sum_{ \tau: \pi \to [N]} F_{i,\# \tau} \prod_{ \Gamma \in \pi} G_{ \tau(\Gamma), \# \Gamma},
\end{align*}
where the former sum ranges over all partitions $\pi$ of $[\alpha]$, the latter sum ranges over all functions $\tau$ labelling the blocks of $\pi$, and $\# \tau$ denotes the multi-index whose $i^{\text{th}}$ counts the number of blocks of $\pi$ such that $\tau(\Gamma) = i$..

Before discussing inversion of power series in the next section, we close this section with a brief discussion of the Fa\`a di Bruno formula and its generalisations. While the formula, is as the name indicates, typically attributed to Italian mathematician Fa\`a di Bruno \cite{Brunoa, Brunob, Bruno1}, it first appeared in the calculus book \cite{Arbogast} of Arbogast in the year 1800. 
We refer the reader to Johnson \cite{johnson} for a more thorough historical discussion of Fa\`a di Bruno formula.

Several intermediate generalisations of Fa\`a di Bruno's formula have appeared in the literature. Abraham and Robbin \cite[Section 1.4]{AR} develop a composite mapping formula for sufficiently differentiable functions $f:E \to F$ and $g:F \to G$ defined on Banach spaces, only giving the integer coefficients implicitly through a recurrence relation. Their composite mapping formula is mainly used in \cite{AR} as a tool to prove the Glaeser rough composition theorem. Constantine and Savits \cite{CS} study the case where $f:\mathbb{R}^b \to \mathbb{R}^c$ and $g: \mathbb{R}^a \to \mathbb{R}^b$, giving the derivatives for $D^\alpha [ f \circ g ]$ in terms of various combinatorial coefficients. The book \emph{Vertex Operator Algebras and the Monster} by Frenkel, Lepowsky and Meurman involves a formal calculus including a Faà di Bruno formula for derivations.  There are countless related formulas out there, for instance in \cite{AA1,gzyl,LR,EM,hardy,yang}.

\subsection{Inversion of power series} \label{sec:inversion}
In this section we turn to looking at combinatorial inversion of formal power series. We broaden our framework, working with the ring $R := \mathbb{K}[[X_1,\ldots,X_N]]$ of formal power series in $N$ determinates with coefficients in a commutative ring $\mathbb{K}$.

Suppose now that we have a vector $F = (F_1,\ldots,F_N)$ in $R^N$: namely each $F_i$ is an element of $R$. Each such $F$ may be thought of as a collection $F = \left( F_{i,\alpha} : i \in [N] , \alpha \in \mathbb{Z}_{ \geq 0}^N \right)$ of elements of the ring $\mathbb{K}$. The Fa\`a di Bruno formula for power series \eqref{eq:fdbpower} gives us an associative composition sending a pair $(F,G)$ of elements in $R^N$ to $F \circ G$ in $R^N$. Moreover, the element $I = \left( I_{i,\alpha} : i \in [N], \alpha \in \mathbb{Z}_{ \geq 0}^N \right)$ of $R^N$ defined by $I_{i,\alpha} = 1$ if and only if $\alpha = \mathbf{e}_i$ is an identity for the composition in the sense that $F \circ I = I \circ F = F$. We are interested in the problem of classifying which elements $F$ of $R^N$ are invertible, that is in identifying for which $F$ there exists a $G$ such that $F \circ G = G \circ F = I$, and in obtaining combinatorial expressions for the coefficients of the inverse $G$ in terms of the coefficients of $F$. 

Given an element $F$ of $R^N$, we write $J(F)$ for its Jacobian matrix, the $N \times N$ matrix given by 
\begin{align*} 
J(F) := \left( \frac{\partial F_i}{\partial X_j}  \right)_{1 \leq i,j \leq N} = \left( \sum_{ \alpha \in \mathbb{Z}_{ \geq 0}^N } \frac{ F_{i, \alpha + \mathbf{e}_j} }{ \alpha!} X^\alpha \right)_{1 \leq i,j \leq N}.
\end{align*}
The Jacobian matrix $J(F)$ is a matrix whose entries take values in the ring $R = \mathbb{K}[[X_1,\ldots,X_N]]$. Evaluating the Jacobian matrix at $X = 0$, we obtain the \emph{linear term} of the power series $F$, the matrix
\begin{align*}
J(F)(0) := \left( F_{i,\mathbf{e}_j} \right)_{1 \leq i,j \leq N}.
\end{align*}
The linear term $J(F)(0)$ is a matrix whose entries take values in $\mathbb{K}$. We will see below that a power series $F$ has a compositional inverse $G$ if and only if its linear term $J(F)(0)$ is invertible as an $N \times N$ matrix with coefficients in the commutative ring $\mathbb{K}$. 

Classifying the coefficients of the power series inverse is more delicate, and we begin with the known case where $F(0) = 0$ and $J(F)(0)$ is the identity matrix, sketching how \eqref{eq:fdbpower2} may be used to identify the derivatives (or equivalently, the coefficients of the power series for) $G$, much in analogy to how we developed the one-dimensional inversion formula in the introduction

To this end, we introduce some notation. We will consider inverting power series in the following two subsets of $R^N$:
\begin{itemize}
\item Let $R_0^N$ denote the set of power series in $R^N$ such that $F(0) = 0$. 
\item Let $R_1^N$ denote the set of power series in $R^N$ such that $F(0) = 0$ and $J(F)(0) = I$. 
\end{itemize}
We begin with the simpler case, where $F$ is an element of $R_1^N$. The $i^{\text{th}}$ component of such an $F$ has the form
\begin{align*}
F_i(X)  = X_i + \sum_{ |\alpha| \geq 2 } \frac{ F_{i,\alpha}}{\alpha!} X^\alpha.
\end{align*}
Now suppose $G$ is an inverse of $F$. Since $F \circ G$ is the identity, we must have $(F \circ G)_{i,\mathbf{e}_i } = 1$ and $(F \circ G)_{i, \alpha } = 0$ whenever $\alpha \neq \mathbf{e}_i$. Using this observation in conjunction with \eqref{eq:fdbpower2} we obtain the relations
\begin{align} \label{eq:system}
\sum_{ \mathcal{T} \in \mathbb{F}_{i,\alpha}(2)} F_{i, \mu(v_0) } \prod_{ v \in V_1} G_{ \tau(v) , \mu(v) } =
\begin{cases}
1 &\text{if $\alpha = \mathbf{e}_i$},\\
0 &\text{otherwise},
\end{cases}
\end{align}
which hold for all $i \in [N]$ and all multi-indices $\alpha$. The system of equations \eqref{eq:system} gives us a straightforward inductive proof of Theorem \ref{thm:inversion}, which states that each $G_{i,\alpha}$ may be written as a sum of weighted trees whose weights are given in terms of the $\left(  F_{ i , \alpha } \right)$. 

Given a proper labelled tree $\mathcal{T} \in \mathbb{S}_{i,\alpha}$ and a collection $H := \left( H_{i,\alpha}  : 1 \leq i \leq N, \alpha \in \mathbb{Z}_{\geq 0}^N : |\alpha| \geq 2 \right)$ of elements of $\mathbb{K}$, we define the $H$-energy of the tree $\mathcal{T}$ to be the element of $\mathbb{K}$ given by 
\begin{align} \label{eq:energy}
\mathcal{E}_H \left( \mathcal{T}  \right) := \prod_{ v \in \mathcal{I} }H_{ \tau(v) , \mu(v) } 
\end{align}

The following theorem states that the coefficients of inverse power series may be given in terms of sums of tree energies over proper trees.

\begin{thm}\label{thm:inversion}
Let $F$ be an element of $R_1^N$, so that the $i^{\text{th}}$ component of $F$ takes the form
\begin{align*}
F_i(X) = X_i - \sum_{ |\alpha| \geq 2 } \frac{ H_{i,\alpha}}{\alpha!} X^\alpha
\end{align*}
for some coefficients $(H_{i,\alpha})$. Then the $i^{\text{th}}$-component of the inverse $G$ of $F$ is given by $G_i(X) = X_i +   \sum_{ |\alpha| \geq 2 } \frac{ G_{i,\alpha}}{\alpha!} X^\alpha$, where for $|\alpha| \geq 2$,
\begin{align} \label{eq:GDEF}
G_{i, \alpha}  = \sum_{ \mathcal{T} \in \mathbb{S}_{i,\alpha} } \mathcal{E}_H \left( \mathcal{T}  \right).
\end{align}

\end{thm}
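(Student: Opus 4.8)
The plan is to imitate the inductive argument behind Proposition~\ref{thm:inversion1}, now driven by the multivariate system \eqref{eq:system}, the one genuinely new ingredient being a grafting bijection that glues a final tree of length $2$, together with a proper tree attached at each of its middle vertices, into a single proper tree on the same leaf set. First I would record the low-degree coefficients of $G$: since $F(0)=0$ its inverse satisfies $G(0)=0$, i.e.\ $G_{i,0}=0$, and taking Jacobians of $F\circ G=I$ at the origin gives $J(F)(0)\,J(G)(0)=I$, so $J(G)(0)=I$ because $J(F)(0)=I$, i.e.\ $G_{i,\mathbf e_j}=\delta_{ij}$. (Existence of the compositional inverse $G\in R_1^N$ is itself elementary, since $F_{i,\mathbf e_i}=1$ lets one solve \eqref{eq:system} for $G_{i,\alpha}$ recursively in $|\alpha|$, and the resulting right inverse is two-sided by the usual identity $\tilde G=\tilde G\circ(F\circ G)=(\tilde G\circ F)\circ G=G$.) I then fix $i$ and induct on $|\alpha|\ge 2$, with inductive hypothesis that $G_{j,\beta}=\sum_{\mathcal S\in\mathbb S_{j,\beta}}\mathcal E_H(\mathcal S)$ for all $j$ and all $\beta$ with $2\le|\beta|<|\alpha|$.

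For the inductive step I would first extract a recursion from \eqref{eq:system}. A tree $\mathcal T'\in\mathbb F_{i,\alpha}(2)$ with $|\alpha|\ge 2$ has $V_1\ne\varnothing$, every $v\in V_1$ internal with $|\mu(v)|\ge 1$ and all its children leaves, and $\sum_{v\in V_1}\mu(v)=\alpha$. Using $F_{i,\mathbf e_i}=1$, $F_{i,\mathbf e_k}=0$ for $k\ne i$, $F_{i,0}=0$ and $F_{i,\beta}=-H_{i,\beta}$ for $|\beta|\ge 2$, the trees with $|\mu(v_0)|=1$ contribute exactly $G_{i,\alpha}$ to the left-hand side of \eqref{eq:system} (the coefficient $F_{i,\mathbf e_j}$ kills all but the one whose single middle vertex has type $i$, which then has outdegree $\alpha$), while the right-hand side vanishes as $\alpha\ne\mathbf e_i$; this yields
\begin{align*}
G_{i,\alpha}=\sum_{\substack{\mathcal T'\in\mathbb F_{i,\alpha}(2)\\ |\mu(v_0)|\ge 2}}H_{i,\mu(v_0)}\prod_{v\in V_1}G_{\tau(v),\mu(v)}.
\end{align*}
In each term $|\mu(v)|\le|\alpha|-1$ for every $v\in V_1$ (because $|V_1|=|\mu(v_0)|\ge 2$), so each factor equals $\delta_{\tau(v),k}$ when $\mu(v)=\mathbf e_k$, and equals $\sum_{\mathcal S_v\in\mathbb S_{\tau(v),\mu(v)}}\mathcal E_H(\mathcal S_v)$ by the inductive hypothesis when $|\mu(v)|\ge 2$.

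Expanding the product of sums, the heart of the argument is a bijection between these data and the trees in $\mathbb S_{i,\alpha}$. From a tree $\mathcal T'$ as above (with $\tau(v)=k$ forced whenever $\mu(v)=\mathbf e_k$) together with a choice of $\mathcal S_v\in\mathbb S_{\tau(v),\mu(v)}$ for each $v\in V_1$ with $|\mu(v)|\ge 2$, I build $\mathcal T\in\mathbb S_{i,\alpha}$ by keeping the root $v_0$, contracting each middle vertex $v$ with $\mu(v)=\mathbf e_k$ and its unique leaf-child into a single leaf of type $k$ inheriting that leaf's label, and grafting $\mathcal S_v$ at each middle vertex $v$ with $|\mu(v)|\ge 2$: identify the root of $\mathcal S_v$ (type $\tau(v)$) with $v$, after transporting the leaves of $\mathcal S_v$ from $[\mu(v)]$ onto the set of labels of the leaf-children of $v$ by the canonical order bijection, which preserves first coordinates and hence leaf types. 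One then checks that $\mathcal T\in\mathbb S_{i,\alpha}$ (the root keeps its $|\mu(v_0)|\ge 2$ children, and every vertex produced by a graft already had $\ge 2$ children in a proper tree), that its leaves biject with $[\alpha]$ since the leaf-label sets of the middle vertices of $\mathcal T'$ partition $[\alpha]$, and that $\mathcal E_H(\mathcal T)=H_{i,\mu(v_0)}\prod_{v:\,|\mu(v)|\ge 2}\mathcal E_H(\mathcal S_v)$, because each contracted vertex is now a leaf and leaves the energy while each root of an $\mathcal S_v$ stays internal. The inverse map reads off $\mu(v_0)$ as the root outdegree of $\mathcal T$, replaces each leaf-child of the root by a single-child middle vertex, and records each internal child of the root together with its subtree, relabelled canonically, as an $\mathcal S_v$. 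Summing $\mathcal E_H(\mathcal T)$ over this bijection recovers precisely the right-hand side of the displayed recursion, closing the induction.

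I expect the only delicate points to be the leaf-label bookkeeping under grafting --- carried by the canonical order bijection between a subset of $[N]\times\{1,2,\ldots\}$ of a prescribed shape and the corresponding $[\mu(v)]$ --- and the verification that ``every internal vertex has at least two children'' is exactly the property cut out by the image of the construction; in particular one must see that the degenerate middle vertices, those with $|\mu(v)|=1$, correspond precisely to the leaves hanging directly off the root of $\mathcal T$. The rest is a routine unwinding of the definitions of $\mathbb F_{i,\alpha}(2)$, $\mathbb S_{i,\alpha}$ and the energy $\mathcal E_H$.
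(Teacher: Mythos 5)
Your proposal is correct and follows essentially the same route as the paper's second (constructive) proof of Theorem \ref{thm:inversion}: derive the system \eqref{eq:system}, isolate the trees with $|\mu(v_0)|=1$ to get the recursion for $G_{i,\alpha}$, use $G_{j,\mathbf e_k}=\delta_{jk}$ to handle the degenerate middle vertices, expand the remaining factors by the inductive hypothesis, and conclude via the grafting bijection onto $\mathbb S_{i,\alpha}$. Your treatment of the contraction of single-child middle vertices and of the leaf-label bookkeeping is in fact slightly more explicit than the paper's, but the argument is the same.
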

As mentioned in the introduction, Theorem \ref{thm:inversion} is implicit in Corollary 2 of Haiman and Schmitt \cite{HS}. In Section \ref{sec:fdbproof}, we give two proofs of Theorem \ref{thm:inversion}, both of which rely on Theorem \ref{thm:main} as a key tool. 
\vspace{5mm}

With a view to discussing the Jacobian conjecture below, we would like to highlight a key property of formal inversion of power series. Since every $F \in R_1^N$ may be written
\begin{align*}
F_i(X) = X_i - \sum_{ |\alpha| \geq 2} \frac{H_{i,\alpha}}{\alpha!} X^\alpha,
\end{align*}
we may canonically associate $R_1^N$ with the set
\begin{align*}
R_1^N := \left( H_{i,\alpha} : i \in [N], \alpha \in \mathbb{Z}_{\geq 0}^N : |\alpha| \geq 2 \right).
\end{align*}
(The choice to work with minus the nonlinear coefficients simplifies several formulas we encounter below.) Under this association, consider the map $\Phi: R_1^N \to R_1^N$ defined by setting
\begin{align*}
\Phi(H)_{i, \alpha} := - \sum_{ \mathcal{T} \in \mathbb{S}_{ i ,\alpha} } \mathcal{E}_H(\mathcal{T}).
\end{align*}
The following remark is a consequence of the fact that in the setting of Theorem \ref{thm:inversion}, the coefficients of $F$ may be recovered from the coefficients of the inverse $G$. We will use the mapping $\Phi$ below in our discussion of the Jacobian conjecture.

\begin{rem} \label{rem:involution}
The mapping $\Phi$ is an involution on $R_1^N$. That is, $\Phi \circ \Phi$ is the identity map on $R_1^N$. 
\end{rem}

\vspace{5mm}

Next we consider the inversion of power series with non-identity linear terms, showing that the inverse coefficients may be given in terms of a sum of alternating tree energies over alternating trees.

Indeed, given collections
\begin{align*}
\left ( H_{i,\alpha} : i \in [N],  |\alpha| \geq 2 \right) \qquad \text{and } \qquad \left( Q_{i, \mathbf{e}_j} : i,j \in [N] \right)
\end{align*}
of elements of a ring $\mathbb{K}$, and an alternating tree $\mathcal{T} \in \mathbb{A}_{i,\alpha}$, define the $(H,Q)$ energy of $\mathcal{T}$ to be the double product
\begin{align*}
\mathcal{E}^{\mathsf{alt}}_{H,Q} \left( \mathcal{T} \right) = \prod_{ \text{$v$  even} } Q_{ \tau(v), \mu (v) } \prod_{ \text{$v$   odd} } H_{ \tau(v) , \mu (v) }
\end{align*}
where the first product is taken over internal vertices whose graph distance from the root is even, and the latter over internal vertices for which it is odd. 

\begin{thm}\label{thm:general inversion} 
Let $F$ be an element of $R_0^N$, so that the $i^{\text{th}}$ compenent of $F$ takes the form 
\begin{align*}
F_i(X) = \sum_{ j = 1}^N P_{i,\mathbf{e}_j} X_j - \sum_{ |\alpha| \geq 2 } \frac{ H_{i,\alpha}}{\alpha!} X^\alpha
\end{align*}
for some matrix $P = \left( P_{i, \mathbf{e}_j } \right)_{ 1 \leq i,j \leq N}$ and some coefficients $(H_{i,\alpha})$. 

If $P$ is invertible, then $F$ has a compositional power series inverse $G$. Moreover, the $i^{\text{th}}$ component of $G$ is given by 
\begin{align*}
G_i(s) = \sum_{ j = 1}^N Q_{i, \mathbf{e}_j } X_j + \sum_{ |\alpha| \geq 2} \frac{ G_{i,\alpha}}{ \alpha!} X^\alpha, 
\end{align*} 
where $Q$ is the matrix inverse of $P$, and for $|\alpha| \geq 2$, the coefficients $G_{i,\alpha}$ are given by 
\begin{align} \label{eq:gsum}
G_{i,\alpha}  := \sum_{ \mathcal{T} \in \mathbb{A}_{i,\alpha} } \mathcal{E}^{\mathsf{alt}}_{H,Q}\left( \mathcal{T} \right).
\end{align}
\end{thm}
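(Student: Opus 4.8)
The plan is to first strip the linear part off $F$ so as to reduce to the identity-linear-term case already handled in Theorem~\ref{thm:inversion}, use that result both for existence and to identify the linear term of $G$, and then run a separate strong induction on $|\alpha|$ that converts the proper-tree sum of Theorem~\ref{thm:inversion} into an alternating-tree sum. Throughout I use that composition on $R_0^N$ is associative with two-sided unit $I$ --- this is recorded just above via \eqref{eq:fdbpower} --- together with the elementary identity $L_P\circ L_{P'}=L_{PP'}$ for the linear maps $L_P\colon X\mapsto PX$. Since $P$ is invertible, set $Q:=P^{-1}$ and $\hat F:=L_Q\circ F$. As $L_Q$ is linear we have $\hat F_i=\sum_j Q_{i,\mathbf{e}_j}F_j$, and a one-line expansion gives $\hat F\in R_1^N$ with $\hat F_i(X)=X_i-\sum_{|\alpha|\geq 2}\frac{\hat H_{i,\alpha}}{\alpha!}X^\alpha$, where $\hat H_{i,\alpha}=\sum_j Q_{i,\mathbf{e}_j}H_{j,\alpha}$. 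By Theorem~\ref{thm:inversion}, $\hat F$ has a two-sided compositional inverse $\hat G\in R_1^N$ with $\hat G_{i,\alpha}=\sum_{S\in\mathbb{S}_{i,\alpha}}\mathcal{E}_{\hat H}(S)$ for $|\alpha|\geq 2$. Put $G:=\hat G\circ L_Q$. Since $F=L_P\circ\hat F$, associativity yields $F\circ G=L_P\circ(\hat F\circ\hat G)\circ L_Q=L_P\circ L_Q=I$, and likewise $G\circ F=\hat G\circ L_{QP}\circ\hat F=\hat G\circ\hat F=I$; this proves $F$ is invertible, and reading off the degree-one part of $G_i(X)=\hat G_i(QX)$ shows the linear term of $G$ is $Q$.

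It remains to prove \eqref{eq:gsum}. Working from $F\circ G=I$ and the two-fold formula \eqref{eq:fdbpower2}, one splits off the unique term in which the root $v_0$ of the tree has a single child --- which contributes $\sum_j F_{i,\mathbf{e}_j}G_{j,\alpha}=\sum_j P_{i,\mathbf{e}_j}G_{j,\alpha}$, since in that case the child of $v_0$ carries all $|\alpha|$ leaves --- and uses $F_{i,\mu(v_0)}=-H_{i,\mu(v_0)}$ when $|\mu(v_0)|\geq 2$, to obtain, for all $i$ and all $\alpha$ with $|\alpha|\geq 2$, the relation
\[
\sum_{j=1}^N P_{i,\mathbf{e}_j}\,G_{j,\alpha}=\sum_{\substack{\mathcal{T}\in\mathbb{F}_{i,\alpha}(2)\\ |\mu(v_0)|\geq 2}}H_{i,\mu(v_0)}\prod_{v\in V_1}G_{\tau(v),\mu(v)}.
\]
Multiplying on the left by $Q=P^{-1}$ gives the recursion
\[
G_{k,\alpha}=\sum_{i=1}^N Q_{k,\mathbf{e}_i}\sum_{\substack{\mathcal{T}\in\mathbb{F}_{i,\alpha}(2)\\ |\mu(v_0)|\geq 2}}H_{i,\mu(v_0)}\prod_{v\in V_1}G_{\tau(v),\mu(v)},
\]
which expresses $G_{k,\alpha}$ through coefficients $G_{\tau(v),\mu(v)}$ with $|\mu(v)|<|\alpha|$, and, when $|\mu(v)|=1$, through the linear coefficients $G_{j,\mathbf{e}_c}=Q_{j,\mathbf{e}_c}$ already computed.

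I would then prove \eqref{eq:gsum} by strong induction on $|\alpha|$, matching this recursion to the recursive anatomy of alternating trees. Every $\mathcal{T}\in\mathbb{A}_{k,\alpha}$ with $|\alpha|\geq 2$ consists of an even root of type $k$ with a single child $u$, where $u$ is a generation-$1$ odd vertex of some type $i$ with $d\geq 2$ children $u_1,\dots,u_d$, and, below those children (which are again even vertices), maximal subtrees $\mathcal{T}_1,\dots,\mathcal{T}_d$ that are themselves alternating trees with $\mathcal{T}_s\in\mathbb{A}_{\tau(u_s),\delta_s}$, $\sum_s\delta_s=\alpha$, and $|\delta_s|<|\alpha|$. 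As the parity of a vertex inside $\mathcal{T}_s$ equals its parity in $\mathcal{T}$, the energy factorises as $\mathcal{E}^{\mathsf{alt}}_{H,Q}(\mathcal{T})=Q_{k,\mathbf{e}_i}\,H_{i,\mu(u)}\prod_s\mathcal{E}^{\mathsf{alt}}_{H,Q}(\mathcal{T}_s)$. Summing over $\mathcal{T}$ and invoking the inductive hypothesis on each $\mathcal{T}_s$ --- in the degenerate case $\delta_s=\mathbf{e}_c$ one uses that $\mathbb{A}_{\tau(u_s),\mathbf{e}_c}$ consists of the single two-vertex tree, an even root over a leaf, of energy $Q_{\tau(u_s),\mathbf{e}_c}=G_{\tau(u_s),\mathbf{e}_c}$ --- replaces each $\sum_{\mathcal{T}_s}\mathcal{E}^{\mathsf{alt}}_{H,Q}(\mathcal{T}_s)$ by $G_{\tau(u_s),\delta_s}$. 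Observing that the data of the type $i$, the children $u_s$ with their types, and the labelled partition $(\delta_s)$ of $[\alpha]$ is precisely the data of a tree in $\mathbb{F}_{i,\alpha}(2)$ with $|\mu(v_0)|\geq 2$, the resulting expression is exactly the right-hand side of the recursion above, completing the induction.

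I expect the crux to be this last identification. One must verify that the layer-by-layer decomposition of an alternating tree keeps all $Q$-factors on even vertices and all $H$-factors on odd ones; that the two-vertex alternating trees produced by the linear coefficients of $G$ are accounted for exactly once each; and --- the fussiest point, and the one most parallel to the proof of Theorem~\ref{thm:inversion} --- that leaf labels are carried along so that the decomposition of $\mathbb{A}_{k,\alpha}$ is a genuine bijection with the index set of the recursion rather than only an equality of formal sums. By contrast, the reduction in the first paragraph is valid over an arbitrary commutative ring $\mathbb{K}$, since it uses nothing beyond the assumed invertibility of $P$.
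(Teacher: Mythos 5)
Your proof is correct, but it reaches \eqref{eq:gsum} by a genuinely different route than the paper. The paper also strips off the linear part, but on the other side ($\widetilde{F}:=F\circ Q$, so $\widetilde{H}=H\circ Q$, and $G:=Q\circ\widetilde{G}$), and then deduces \eqref{eq:gsum} from the proper-tree formula of Theorem \ref{thm:inversion} applied to $\widetilde{F}$: it introduces a collapse map $\mathsf{P}:\mathbb{A}_{i,\alpha}\to\bigcup_{j}\mathbb{S}_{j,\alpha}$ (delete the even generations) and proves the energy identity $Q_{i,\mathbf{e}_j}\,\mathcal{E}_{H\circ Q}(\mathcal{T})=\sum_{\mathcal{A}\in\mathsf{P}^{-1}(\mathcal{T})}\mathcal{E}^{\mathsf{alt}}_{H,Q}(\mathcal{A})$, so that alternating trees appear as the combinatorial expansion of the composite coefficients $H\circ Q$ together with the outer $Q$-factor. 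You instead use Theorem \ref{thm:inversion} only to get existence of the two-sided inverse and the linear term $Q$ (your reduction $\hat{F}=L_Q\circ F$, with $\hat{H}_{i,\alpha}=\sum_j Q_{i,\mathbf{e}_j}H_{j,\alpha}$, is just as valid as the paper's), and you then prove \eqref{eq:gsum} by extracting the recursion $\sum_j P_{i,\mathbf{e}_j}G_{j,\alpha}=\sum_{\mathcal{T}\in\mathbb{F}_{i,\alpha}(2),\,|\mu(v_0)|\geq 2}H_{i,\mu(v_0)}\prod_{v\in V_1}G_{\tau(v),\mu(v)}$ from $F\circ G=I$ via \eqref{eq:fdbpower2}, multiplying by $Q$, and running a strong induction on $|\alpha|$ that matches the root-decomposition of an alternating tree (even root with one odd child $u$ of type $i$, $H_{i,\mu(u)}$ at $u$, alternating subtrees below); this is essentially the constructive ``Proof 2'' of Theorem \ref{thm:inversion} transplanted to alternating trees, with the single-leaf subtrees correctly accounting for the linear coefficients $G_{j,\mathbf{e}_c}=Q_{j,\mathbf{e}_c}$. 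The trade-off: the paper's argument isolates the combinatorics in one clean bijection ($\mathsf{P}^{-1}$, explaining structurally why even generations carry $Q$ and odd generations carry $H$) and reuses Theorem \ref{thm:inversion} wholesale, while yours avoids computing $\widetilde{H}=H\circ Q$ and the tree surgery altogether at the cost of redoing the induction and the relabelling/bijection bookkeeping inside it --- the point you rightly flag as the crux, and which is handled the same way as in the paper's inductive proofs (blocks of $[\alpha]$ are relabelled to $[\#S]$, and the data of types plus blocks is exactly a tree in $\mathbb{F}_{i,\alpha}(2)$ with $|\mu(v_0)|\geq 2$). Both arguments work over an arbitrary commutative ring $\mathbb{K}$.
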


We remark that thanks to Theorem \ref{thm:general inversion}, the mapping $\Phi$ defined above Remark \ref{rem:involution} has an extension to the set of all invertible power series which is still an involution. 

Moreover, it is straightforward to see that the more general result, Theorem \ref{thm:general inversion} implies the special case Theorem \ref{thm:inversion}. Indeed, suppose that $P$ in Theorem \ref{thm:general inversion} is the identity matrix. Then the inverse $Q$ of $P$  is also the identity, and it follows that each sum in \eqref{eq:gsum} is supported only on trees such that every internal even vertex has outdegree $\mu(v) = \mathbf{e}_{\tau(v)}$, i.e. all vertices in odd generations have the same type as their parent, and in this case we have $\prod_{ \text{$v$  even} } Q_{\tau(v), \mu(v)} = 1$. By collapsing each edge from an even parent to their odd child, we obtain a canonical bijection between those trees in $\mathbb{A}_{i,\alpha}$ such that every odd vertex has the same type as their parent, and trees in $\mathbb{S}_{i,\alpha}$. That this bijection is energy preserving follows from the fact that $\prod_{ \text{$v$  even} } Q_{\tau(v), \mu(v)} = 1$.

Let us take a moment to discuss other formulas for inversion of power series. As mentioned, Theorem \ref{thm:inversion} appears in implicitly in the work of Haiman and Schmitt, where it is used to express the algebra of power series with composition in terms of the powerful \emph{incidence algebra} framework. A less explicit version of Theorem \ref{thm:general inversion} appears in Cheng et al. \cite{cheng}.

Bass, Connell and Wright \cite{BCW} provide inversion formulas for the entire inverse series $G$ in terms of a sum of tree energies over a collection of trees of arbitrary size. (See also the more general Wright \cite{wright1}.) Their analogues for tree energies are themselves polynomials, taking the form \begin{align*}
\mathcal{E}^{\mathsf{BCW}}(\mathcal{T})= \prod_{ v \in I} H_{\tau(v),\mu(v)} \prod_{v \in L } x_{\tau(v)},
\end{align*} 
where the leaves are included as monomials in the energy products. There are also a large collection of inversion formulas variously referred to as Lagrange inversion formulas, see Gessel \cite{gessel} for a comprehensive discussion.
Finally, let us mention that tree formalisms have been used to tackle problems in countless other related areas. See for instance Carletti \cite{carletti}, who studies a problem in dynamical systems, as well as work by Gentile and coauthors \cite{gentile, BG}.

\subsection{The Jacobian conjecture} \label{sec:JC}
We now discuss the celebrated \emph{Jacobian conjecture}.  Recall that $R^N = \mathbb{K} [[ X_1,\ldots,X_N]]^N$ is the set of $N$-tuples $(F_1,\ldots,F_N)$ of formal power series in $N$ variables with coefficients in a ring $\mathbb{K}$. We say an element $F = \left(F_{i,\alpha} : i \in [N], \alpha \in \mathbb{Z}_{ \geq 0}^N \right)$ of $R^N$ is a \emph{polynomial mapping} if only finitely many of the values $(F_{i,\alpha})$ are non-zero. A polynomial mapping is said to be a \emph{polynomial automorphism} if there exists another polynomial mapping $G$ such that $F \circ G = G \circ F = I$. The Jacobian conjecture is concerned with identifying which polynomial mappings are polynomial automorphisms, and has been the subject of a large deal of research over the last fifty years. We refer the reader to the book \cite{VDEbook} for the most comprehensive source on the Jacobian conjecture.

For the sake of concreteness, we work over the complex numbers, setting $\mathbb{K} = \mathbb{C}$, though it is known \cite[Proposition 1.12]{VDEbook} that the statement over $\mathbb{C}$ is equivalent to statements over general fields of characteristic zero. The Jacobian conjecture asserts that in order for a polynomial mapping $F$ to be a polynomial automorphism, it is sufficient to check the global invertibility of its Jacobian matrix:

\begin{conj}[The Jacobian conjecture] \label{JC} Let $N \geq 1$ be any integer. 
Let $F$ be a polynomial mapping such that the Jacobian determinant $j(F) : \mathbb{C}^N \to \mathbb{C}$ defined by 
\begin{align*}
j(F) := \det \left( J(F) \right) = \det_{i,j = 1}^N \left( \frac{ \partial F_i }{ \partial X_j } \right) 
\end{align*}
is equal in value on $\mathbb{C}^N$ to a constant in $\mathbb{C} - \{0\}$. Then $F$ is a polynomial automorphism.
\end{conj}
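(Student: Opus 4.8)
The plan is to prove the conjecture by combining the classical reductions of Bass–Connell–Wright with the explicit tree-energy form of the inverse supplied by Theorem~\ref{thm:general inversion}, and then to establish the single combinatorial cancellation on which everything rests. First I would normalize: after translating so that $F(0)=0$ and composing with the (polynomial, invertible) linear map $J(F)(0)^{-1}$—operations that preserve both the constant-Jacobian hypothesis and the property of being a polynomial automorphism—we may assume $F\in R_1^N$, so that $F_i(X)=X_i-\sum_{|\alpha|\geq 2}\frac{H_{i,\alpha}}{\alpha!}X^\alpha$ with only finitely many $H_{i,\alpha}$ nonzero and $\det J(F)=1$. Since the linear term is now the identity, hence invertible, Theorem~\ref{thm:general inversion} (in the special case recorded as Theorem~\ref{thm:inversion}) guarantees that $F$ has a two-sided \emph{formal} compositional inverse $G$ whose coefficients are the tree sums $G_{i,\alpha}=\sum_{\mathcal{T}\in\mathbb{S}_{i,\alpha}}\mathcal{E}_H(\mathcal{T})$. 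The decisive observation is that $F$ is a polynomial automorphism precisely when this $G$ is a polynomial; thus the whole conjecture becomes the assertion that $G_{i,\alpha}=0$ for every $i$ and every multi-index $\alpha$ of sufficiently large degree, and my task reduces to forcing these tree sums to vanish.

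Second, I would feed in the hypothesis and the available structural reductions. Writing $J(F)=I-M$, the condition $\det(I-M)=1$ is equivalent, via $\log\det=\Tr\log$, to the vanishing of the formal series $\sum_{k\geq 1}\frac1k\Tr(M^k)$, which unpacks degree by degree into an infinite hierarchy of polynomial \emph{Jacobian relations} among the coefficients $H_{i,\alpha}$. By the Bass–Connell–Wright reduction \cite{BCW} (see also Wright \cite{wright1}) it suffices to treat the homogeneous cubic case, where $H_{i,\alpha}=0$ unless $|\alpha|=3$ and the matrix $J(H)$ is nilpotent, say $J(H)^N=0$. In this regime the only trees contributing to $G_{i,\alpha}$ are the \emph{ternary} proper trees, every internal vertex carrying a cubic tensor $H_{\tau(v),\mu(v)}$, and a directed root-to-leaf path of length $\ell$ corresponds to an $\ell$-fold contraction built from $J(H)$; nilpotency then forces the energy of any tree containing a sufficiently long path to vanish. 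This disposes of the "deep" trees for free.

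The heart of the argument—and the step I expect to be the genuine obstacle—is to upgrade this path-level vanishing to the vanishing of the \emph{entire} sum $\sum_{\mathcal{T}\in\mathbb{S}_{i,\alpha}}\mathcal{E}_H(\mathcal{T})$, since trees may be wide and shallow and so escape any purely path-based bound. The mechanism I would use to prove the cancellation is to treat the Jacobian relations $\Tr(M^k)=0$ as syzygies organizing the ternary trees into canceling families: concretely, to construct an energy-preserving, sign-reversing correspondence on $\mathbb{S}_{i,\alpha}$ (equivalently, through the collapsing bijection to the alternating trees of Theorem~\ref{thm:general inversion}, on $\mathbb{A}_{i,\alpha}$) whose existence is dictated by those relations, with the involution $\Phi$ of Remark~\ref{rem:involution} providing the algebraic scaffolding for the pairing. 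Carrying this through for all large $|\alpha|$ would complete the proof that $G$ is polynomial and hence that $F$ is a polynomial automorphism. I should be candid that this is exactly the purely combinatorial restatement the paper isolates as Conjecture~\ref{JC3}, that it concentrates the full difficulty of the problem in one place, and that producing the required correspondence unconditionally is where the plan must confront—and, as matters stand, cannot yet circumvent—the genuine depth of the Jacobian conjecture.
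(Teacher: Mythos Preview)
The statement you are attempting to prove is not a theorem of the paper: it is the Jacobian conjecture, a famous open problem, and the paper offers no proof of it. What the paper does is precisely what your proposal does---normalize to $F\in R_1^N$, invoke Theorem~\ref{thm:inversion} to write the formal inverse coefficients as tree sums $G_{i,\alpha}=\sum_{\mathcal T\in\mathbb S_{i,\alpha}}\mathcal E_H(\mathcal T)$, apply the Bass--Connell--Wright reduction to the homogeneous case with $J(H)$ nilpotent, and then isolate as Conjecture~\ref{JC3} the purely combinatorial assertion that the tree sums vanish for all large $|\alpha|$. Your plan faithfully reproduces this reduction but does not advance beyond it.

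The genuine gap is exactly where you locate it and candidly admit it: the step you call ``the heart of the argument,'' namely producing an energy-preserving sign-reversing correspondence on $\mathbb S_{i,\alpha}$ forced by the trace relations $\Tr(J(H)^k)=0$. You propose this as a mechanism but supply no construction, and indeed no such construction is known. Nilpotency of $J(H)$ kills the energy of trees containing a long enough root-to-leaf path (this is essentially the content of Lemma~\ref{lem:fern}), but as you yourself note, wide shallow trees escape this, and passing from path-level vanishing to vanishing of the full sum is the entire difficulty. Your final paragraph is an honest acknowledgment that the proposal is not a proof but a restatement of the open problem; that is an accurate self-assessment, and it means the proposal cannot be accepted as a proof of Conjecture~\ref{JC}.
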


Several partial results exist for the Jacobian conjecture, usually studying the invertibility of polynomial mappings in a certain dimension, of a certain degree, or with certain other structural properites. Most notably, building on work by Moh \cite{moh}, Wang \cite{lcwang} has shown that the Jacobian conjecture is true for all quadratic mappings.

A large amount of literature on the Jacobian conjecture has been concerned with reducing the complexity of the problem. A foundational idea in this direction is the following reduction due to 
Bass, Connell and Wright \cite{BCW}.

\begin{thm}[Bass, Connell and Wright \cite{BCW}] \label{thm:BCW}
In order to prove Conjecture \ref{JC} it is sufficient to fix any $\delta \geq 3$, and consider maps of the form $F = I - H$, where $H$ is a homogenous polynomial of degree $\delta$. Moreover, we may assume the Jacobian matrix of $H$ is nilpotent.
\end{thm}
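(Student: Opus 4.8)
The plan is to reproduce the classical reduction of Bass, Connell and Wright \cite{BCW} (see also \cite{VDEbook}) in three stages: normalise the linear part of $F$, raise its nonlinear part to a homogeneous polynomial of the prescribed degree $\delta$ by enlarging the number of variables, and finally observe that nilpotency of the Jacobian then comes for free. \textbf{Stage 1 (normalising the linear part).} Let $F$ be a polynomial mapping with $j(F)$ equal to a nonzero constant $c$. Post-composing with the translation $Y \mapsto Y - F(0)$ --- a polynomial automorphism that leaves $J(F)$ unchanged --- we may assume $F(0) = 0$. Then the linear term $L := J(F)(0)$ has $\det L = j(F)(0) = c \neq 0$, so $L$ is invertible; post-composing with $Y \mapsto L^{-1}Y$ multiplies $j(F)$ by the nonzero constant $\det L^{-1}$ and renders the linear term the identity. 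This reduces matters to the case $F = I - H$ with $H$ of order $\geq 2$, noting that $F$ is a polynomial automorphism if and only if the normalised map is; write $H = H_2 + \cdots + H_d$ for its decomposition into homogeneous pieces.

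\textbf{Stage 2 (fixing the degree).} This is the technical core. From $F = I - H$ in $N$ variables we must manufacture a map $\tilde F = I - \tilde H$ in $N + M$ variables, for a suitable $M$, with $\tilde H$ homogeneous of degree exactly $\delta$, in such a way that $j(\tilde F)$ is again a nonzero constant and $\tilde F$ is a polynomial automorphism if and only if $F$ is. Two auxiliary-variable devices accomplish this. First, a monomial of degree $d \geq 4$ occurring in $H$ can be split by adjoining a fresh variable constrained to equal a sub-product of that monomial and rewriting accordingly; iterating lowers the total degree to $3$. Second, once the nonlinear part is cubic homogeneous, adjoin one further coordinate $z$, replace each component $H_i(X)$ by $z^{\delta - 3}H_i(X)$, and keep the $z$-component of the map equal to $z$; the nonlinear part is then homogeneous of degree $\delta$. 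At each step the new Jacobian matrix is block triangular, with the old Jacobian (respectively $I - z^{\delta-3}J(H)$) in one diagonal block and $1$'s in the extra rows, so its determinant agrees with the previous one up to a nonzero constant; and a routine bookkeeping turns a polynomial inverse of one map into a polynomial inverse of the other, in both directions. Composing these operations produces the desired $\tilde F = I - \tilde H$ with $\tilde H$ homogeneous of degree $\delta$.

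\textbf{Stage 3 (nilpotency for free).} Suppose now $F = I - H$ with $H$ homogeneous of degree $\delta \geq 3$ and $j(F) = \det\big(I - J(H)\big)$ constant; since $J(H)(0) = 0$ the constant value is $1$. Substituting $X \mapsto \lambda X$ and using the homogeneity identity $J(H)(\lambda X) = \lambda^{\delta - 1}J(H)(X)$ yields $\det\big(I - \lambda^{\delta-1}J(H)(X)\big) = 1$ for every scalar $\lambda$; as $\lambda \mapsto \lambda^{\delta-1}$ is surjective onto $\mathbb{C}$, this gives $\det\big(I - s\,J(H)(X)\big) = 1$ for all $s \in \mathbb{C}$ and all $X$, so the characteristic polynomial of $J(H)(X)$ equals $s^N$ identically in $s$ and $X$. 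By Cayley--Hamilton over the polynomial ring it follows that $J(H)^N = 0$, i.e.\ the Jacobian matrix of $H$ is nilpotent, completing the reduction. I expect Stage 2 to be the main obstacle: the work lies in organising the auxiliary-variable constructions so that both ``is a polynomial automorphism'' and ``has constant Jacobian determinant'' are transported faithfully in both directions, and in checking the bookkeeping for the inverse maps; Stages 1 and 3 are short and essentially formal.
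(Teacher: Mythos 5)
Your Stages 1 and 3 are fine and essentially coincide with what the paper itself says around Theorem \ref{thm:BCW}: the normalisation $F \mapsto F \circ L^{-1}$ is the same elementary reduction given there, and your dilation argument for nilpotency is the paper's argument with the Cayley--Hamilton theorem in place of the expansion \eqref{eq:tracey}. The paper, however, does not prove the homogenisation step at all --- it is explicitly deferred to \cite[Section 2]{BCW} with the comment that it uses stabilisation ideas from K-theory --- and that is exactly where your Stage 2 has a genuine gap. Your first device (adjoin a fresh variable $w$ ``constrained to equal'' a sub-monomial $M_1$ and replace a monomial $M = M_1 M_2$ of degree $\geq 4$ by $wM_2$) is not justified by the reasons you give: the new Jacobian is \emph{not} block triangular, since the row of the new component $w \pm M_1(X)$ contains $\partial M_1/\partial X \neq 0$ in the old columns while the rewritten component contains $M_2(X)$ in the new column, so both off-diagonal blocks are nonzero and the determinant does not visibly agree with $j(F)$; moreover the new map does not preserve the locus $\{w = M_1(X)\}$, so the ``routine bookkeeping'' transporting polynomial inverses in both directions is not routine. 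Preserving simultaneously the Keller condition and the automorphism property under such variable-splitting is precisely the nontrivial content of the Bass--Connell--Wright reduction, which they obtain by realising the enlarged map as a composition of $F \oplus \mathrm{id}$ with elementary automorphisms after adding sufficiently many variables (the ``stabilisation''); your sketch asserts the conclusion of that analysis rather than supplying it.

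There is a second, related gap: even granting the first device, it only yields a nonlinear part of degree at most $3$, i.e.\ $H_2 + H_3$, whereas your second device takes as input a \emph{cubic homogeneous} $H$. The bridge from degree $\leq 3$ to homogeneous of degree $3$ is missing, and the naive homogenisation $H_2 \mapsto zH_2$ does not obviously preserve constancy of the Jacobian determinant: from $\det\bigl(I + J H_2 + J H_3\bigr) \equiv 1$ one gets, by scaling, $\det\bigl(I + tJH_2(X) + t^2 JH_3(X)\bigr) \equiv 1$, but what is needed is the identity with independent parameters, and this is again where the actual \cite{BCW} construction does real work. (Your final device --- passing from cubic homogeneous $H$ with $J(H)$ nilpotent to $z^{\delta-3}H$ --- is correct: there the enlarged Jacobian genuinely is block triangular with nilpotent diagonal blocks, and restriction of the inverse to $z=1$ recovers an inverse of $F$.) As written, Stage 2 should either be replaced by the stabilisation argument of \cite{BCW} (see also \cite{VDEbook}) or cited, as the paper does.
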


We now make a few remarks about Theorem \ref{thm:BCW}. It is straightforward to derive a weaker version of Theorem \ref{thm:BCW}, namely that in order to prove the Jacobian conjecture one need only consider maps of the form $F = I - H$, where $H$ contains only degree two terms or higher. To see this, first note that we may assume $F(0) = 0$, since translations are clearly polynomial automorphisms. Now suppose $F$ is a polynomial mapping fixing the origin and let $L$ be the (invertible) linear transformation $L := J(F)(0)$ associated with evaluating the Jacobian matrix of $F$ at zero. Then the map $\widetilde{F} := F \circ L^{-1}$ has the desired form $\widetilde{F} = I - H$, and $\widetilde{F}$ is a polynomial automorphism if and only if $F$ is a polynomial automorphism. 

The fact that we may further assume that $H$ is homogenous of some degree greater than or equal to three is less elementary, and uses stabilisation ideas from K-theory, which involves treating maps of high degrees in low dimensions as lower degree maps in higher dimensions \cite[Section 2]{BCW}.

When $H$ is homogenous, it is fairly straightforward to show that $j(F)$ constant implies that $H$ is nilpotent. Indeed, if $F = I - H$ has constant determinant, then evaluating this determinant at the origin we see that $j(F)(X) =1$ for all $X \in \mathbb{C}^N$. Define the dilation map $\varphi_t : \mathbb{C}^N \to \mathbb{C}^N$ by $\varphi_t( X ) := t X$ and set $F_t := \varphi_t^{-1} \circ F \circ \varphi_t$. If $F = I -H$ where $H$ is homogenous of degree $\delta \geq 3$, then 
\begin{align*}
F_t = I - t^{\delta-1} H.
\end{align*}
Since $\varphi_t$ is a polynomial automorphism, $F_t$ is a polynomial automorphism if and only if $F$ is. Moreover, by the chain rule $j(F_t)(X) = j(F)(X) = 1$ for all $X \in \mathbb{C}^N$. In particular, for all $t > 0$,
\begin{align} 
1 &= \det \left( I - t^{\delta-1} J(H) \right) \nonumber \\
&= \exp \left( \mathrm{Tr} \log \left( I - t^{\delta-1} J(H) \right) \right) \nonumber\\
&= 1 + \sum_{ k \geq 1} a_k t^{k(\delta-1)} \mathrm{Tr} \left( J(H)^k \right), \label{eq:tracey}
\end{align}
for some constants $(a_k)_{k \geq 1}$. For \eqref{eq:tracey} to hold for every $t$, we must have that every power of $J(H)$ is traceless. In particular, every eigenvalue of $J(H)$ is zero, so that $J(H)$ is nilpotent. It follows that whenever $F = X - H$ has constant Jacobian determinant and $H$ is nilpotent, $J(H)$ is nilpotent.

There are several further reductions to Theorem \ref{thm:BCW}. Dru\.zkowski \cite{druzkowski} showed that we may assume further that each component of $H$ is the cube of a linear map (such mappings are known as Dru\.zkowski mappings). De Bondt and Van den Essen \cite{DBVDE} showed that we may assume that the Jacobian matrix of $H$ is symmetric. Remarkably, if both of these properties are assumed simultaneously for a mapping --- i.e. a mapping $F$ is both symmetric and the cube of a linear map --- then this mapping is a polynomial automorphism \cite{DBVDE2}. 

Several authors, most notably Wright \cite{wright1,wright2,wright3,wright4}, Singer \cite{singer1,singer2,singer3}, Zeilberger \cite{zeilberger} and Abdesselam \cite{AA2}, have remarked on approaching the Jacobian conjecture from a combinatorial standpoint. We would like to collect together some of their ideas here in our notation to give a purely combinatorial formulation of the Jacobian conjecture that we hope will motivate future research. 

To this end, recall the mapping $\Phi$ defined above Remark \ref{rem:involution}, which takes an element of $R_1^N$, i.e. a collection of coefficients $H := \left( H_{i,\alpha } : i \in [N] , \alpha \in \mathbb{Z}_{ \geq 0}^N \right)$ and outputs a second collection of coefficients $\Phi(H) := \left( \Phi(H)_{i,\alpha}: i \in [N] , \alpha \in \mathbb{Z}_{ \geq 0}^N \right)$ in such a way that formal power series with components
\begin{align*}
F_i = X_i - \sum_{ |\alpha| \geq 2 } \frac{ H_{i,\alpha}}{\alpha!} X^\alpha \qquad \text{and} \qquad G_i = X_i - \sum_{ |\alpha| \geq 2} \frac{ \Phi(H)_{i,\alpha}}{\alpha!} X^\alpha
\end{align*}
are compositional inverses of one another. (In particular, $\Phi$ is an involution.)

In order to pursue this direction further, it is useful to phrase the nilpotency of the non-linear term $H$ in combinatorial terms. We say the Jacobian matrix of $H$ has index of nilpotency $m$ if $J(H)^m$ is equal to zero on $\mathbb{C}^N$, and $m$ is the smallest integer with this property. Whenever  $J(H)$ is an $N \times N$ nilpotent matrix, the index of nilpotency must be at most $N$. 

If $H$ has index of nilpotency $m$, then certain energy sums over a type of trees we call \emph{ferns} are equal to zero. A fern of length $m$ is a tree with a designated path of vertices $\{v_0,v_1,\ldots,v_m\}$ starting from the root such that each $v_{i+1}$ is a child of $v_i$, and every vertex not equal to some $v_i$ is a leaf. We call $\{v_0,\ldots,v_m\}$ the spine of the fern.

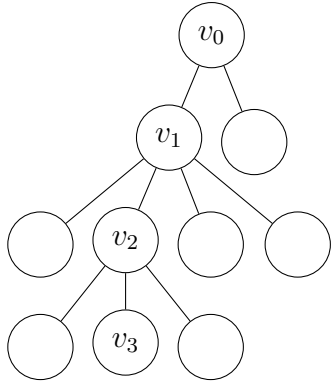
\begin{figure}[ht!]
  \centering   
 \begin{forest}
[$v_0$,
[$v_1$,
[][$v_2$,
[]
[$v_3$]
[]
][][]
]
[]
]
\end{forest}
  \caption{A fern of length three.}\label{fig:fern}
\end{figure}

We define $\mathrm{Fern}_{i,\alpha,j}(m)$ to be the set of quadruplets $\mathcal{T} = (V,E,\tau,\phi)$ such that
\begin{itemize}
\item The underlying graph $(V,E)$ is a fern of length $m$ with spine $\{v_0,\ldots,v_m\}$. 
\item The labelling function $\phi$ is a bijection between the leaves $L - \{v_m\}$ not equal to $v_m$ and the set $[\alpha]$.
\item The typing function $\tau:V \to [N]$ satisfies $\tau(v_0) = i$, $\tau(v_m) = j$, and for all $v \in L - \{v_m\}$, if $\phi(v) = (k,a)$ for some $a$ then $\tau(v) = k$. 
\end{itemize}

The following lemma states the nilpotency of $H$ in terms of energy sums over sets of ferns.

\begin{lemma}[The fern lemma] \label{lem:fern}
Let $H:\mathbb{C}^N \to \mathbb{C}^N$ be a polynomial mapping containing only terms of degree two and higher. The following are equivalent.
\begin{enumerate}
\item The $m^{\text{th}}$ power of the Jacobian matrix of $H$, $J(H)^m$, is equal to the zero matrix on $\mathbb{C}^N$. 
\item For every $i,j,\alpha$,
\begin{align*}
\sum_{ \mathcal{T} \in \mathrm{Fern}_{i,\alpha,j}(m) } \mathcal{E}_H( \mathcal{T} ) = 0.
\end{align*}
\end{enumerate}
\end{lemma}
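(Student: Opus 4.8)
The plan is to establish the single identity that, for each $i,j \in [N]$ and each multi-index $\alpha$, the sum $\sum_{\mathcal{T} \in \mathrm{Fern}_{i,\alpha,j}(m)} \mathcal{E}_H(\mathcal{T})$ is exactly the coefficient of $X^\alpha / \alpha!$ in the $(i,j)$ entry of the polynomial matrix $J(H)^m$. Granting this, the lemma is immediate: the entries of $J(H)^m$ are polynomials and $\mathbb{C}$ is infinite, so $J(H)^m$ is the zero map on $\mathbb{C}^N$ if and only if every one of these coefficients vanishes, which is precisely statement (2).

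To prove the identity I would first unpack both sides. Since $H$ is a polynomial mapping, $J(H)$ is an $N\times N$ matrix of polynomials with
\[
J(H)_{k,k'} = \frac{\partial H_k}{\partial X_{k'}} = \sum_{\beta} \frac{H_{k,\, \beta + \mathbf{e}_{k'}}}{\beta!}\, X^\beta,
\]
so that the $\beta$-th coefficient of $J(H)_{k,k'}$, in the $\frac{1}{\beta!}$-normalisation used throughout, is $H_{k,\, \beta + \mathbf{e}_{k'}}$. Writing $k_0 = i$ and $k_m = j$, this gives
\[
\big( J(H)^m \big)_{i,j} = \sum_{k_1,\dots,k_{m-1} \in [N]} \; \prod_{l=1}^{m} J(H)_{k_{l-1},k_l},
\]
a sum over spine-types of a product of $m$ power series. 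On the combinatorial side, a fern $\mathcal{T} \in \mathrm{Fern}_{i,\alpha,j}(m)$ is nothing more than two pieces of data: the typing $k_l := \tau(v_l)$ of the interior spine vertices $1 \le l \le m-1$ (the values $k_0 = i$, $k_m = j$ being fixed), and --- since the internal vertices of a fern are precisely $v_0,\dots,v_{m-1}$, the terminal spine vertex $v_m$ being a leaf --- an ordered set partition $[\alpha] = S_0 \sqcup \dots \sqcup S_{m-1}$ into possibly-empty blocks, where $S_l$ records which leaves of $[\alpha]$ hang off $v_l$. For such a tree the children of $v_l$ are the spine child $v_{l+1}$ together with the $\#S_l$ leaves in $S_l$, so $\mu(v_l) = \mathbf{e}_{k_{l+1}} + \#S_l$ and $\mathcal{E}_H(\mathcal{T}) = \prod_{l=0}^{m-1} H_{\, k_l,\; \mathbf{e}_{k_{l+1}} + \#S_l}$.

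The bridge between the two descriptions is the power-series form of the higher-dimensional Leibniz rule \eqref{eq:leib2}: for formal power series $g^{(1)},\dots,g^{(m)}$ with coefficients $g^{(l)}_\beta$, the $\alpha$-th coefficient of $g^{(1)}\cdots g^{(m)}$ is $\sum_{T_1 \sqcup \dots \sqcup T_m = [\alpha]} \prod_{l=1}^m g^{(l)}_{\#T_l}$. Applying this with $g^{(l)} = J(H)_{k_{l-1},k_l}$ and then summing over $k_1,\dots,k_{m-1} \in [N]$ yields that the $\alpha$-th coefficient of $(J(H)^m)_{i,j}$ equals $\sum_{\mathcal{T} \in \mathrm{Fern}_{i,\alpha,j}(m)} \mathcal{E}_H(\mathcal{T})$, as required. (Terms coming from a block $S_l = \emptyset$ contribute the factor $H_{k_l, \mathbf{e}_{k_{l+1}}}$, which vanishes since $H$ has no linear part; such ferns therefore drop out automatically and it is harmless whether or not one excludes them from the count.)

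I do not anticipate a genuine obstacle: the entire content is the dictionary ``fern of length $m$ from $i$ to $j$ with leaves $[\alpha]$'' $\leftrightarrow$ ``spine typing $k_1,\dots,k_{m-1}$ together with an ordered set partition of $[\alpha]$ into $m$ blocks''. The only points requiring care are bookkeeping ones: matching the outdegree contribution $\mathbf{e}_{k_{l+1}}$ of the spine child against the index shift $\beta \mapsto \beta + \mathbf{e}_{k_{l+1}}$ produced by differentiating $H_{k_l}$ in the direction $X_{k_{l+1}}$, and checking that the empty-block terms are inert. If one prefers not to invoke \eqref{eq:leib2}, the same identity follows by expanding the product of power series directly and recognising the multinomial coefficient $\alpha!/(\beta_0! \cdots \beta_{m-1}!)$ as the number of ordered set partitions of $[\alpha]$ with blocks of sizes $\beta_0,\dots,\beta_{m-1}$.
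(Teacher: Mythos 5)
Your proposal is correct and follows essentially the same route as the paper: expand the $(i,j)$ entry of $J(H)^m$ as a sum over spine types $k_0=i,k_1,\ldots,k_m=j$, apply the multivariate Leibniz rule \eqref{eq:leib2} to extract the $\alpha$-th coefficient as a sum over ordered decompositions $S_1\sqcup\cdots\sqcup S_m=[\alpha]$, and identify each term $\prod_l H_{k_{l-1},\,\mathbf{e}_{k_l}+\#S_l}$ with the energy of a unique fern. The only cosmetic difference is that you phrase the vanishing criterion via Taylor coefficients of a polynomial over $\mathbb{C}$ while the paper phrases it via vanishing of all derivatives $D^\alpha[G_{i,j}]$, and your remark that empty blocks are inert (since $H$ has no linear term) is a small extra precision consistent with the paper's argument.
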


We remark that an idea similar to Lemma \ref{lem:fern} appears in the work of Singer \cite[Section 3]{singer1}, who studies the special case where $H$ is quadratic. Lemma \ref{lem:fern} is proved in Section \ref{sec:JC proofs}.

The main result of this section is the following reformulation of the Jacobian conjecture in purely combinatorial terms. This reformulation is based on combining Lemma \ref{lem:fern}, Theorem \ref{thm:BCW}, and the definition of the involution $\Phi$ described below Theorem \ref{thm:inversion}.

\begin{conj}[A combinatorial form of the Jacobian conjecture] \label{JC3}
There exists an integer $\delta \geq 3$ with the following property. Suppose $N \geq 1$ and \[H := \left( H_{i,\alpha} : i \in [N] , \alpha \in \mathbb{Z}_{ \geq 0}^N, |\alpha| = \delta \right)\] is a collection of complex numbers with the property that there exists $m \leq N$ such that for every $i,j \in [N]$ and every $\alpha \in \mathbb{Z}_{\geq 0}^N$, 
\begin{align*}
\sum_{ \mathcal{T} \in \mathrm{Fern}_{i,\alpha,j}(m) } \mathcal{E}_H( \mathcal{T} ) = 0.
\end{align*}
Now define a second collection of complex numbers
\begin{align*}
\Phi(H)_{i ,\alpha} := \sum_{ \mathcal{T} \in \mathbb{S}_{i,\alpha} } \mathcal{E}_H(\mathcal{T} ).
\end{align*}
Then only finitely many of the $\Phi( H)_{i,\alpha}$ are non-zero.
\end{conj}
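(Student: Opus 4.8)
The plan is to show that the nilpotency hypothesis --- the vanishing of the length-$m$ fern sums, equivalently $J(H)^m = 0$ by Lemma \ref{lem:fern} --- forces the proper-tree sums $\Phi(H)_{i,\alpha} = \sum_{\mathcal{T} \in \mathbb{S}_{i,\alpha}} \mathcal{E}_H(\mathcal{T})$ to vanish once $|\alpha|$ is large, so that only finitely many are nonzero. I would start from the bookkeeping forced by homogeneity: since $H_{j,\beta} = 0$ unless $|\beta| = \delta$, the energy $\mathcal{E}_H(\mathcal{T}) = \prod_v H_{\tau(v),\mu(v)}$ is nonzero only when every internal vertex of $\mathcal{T}$ has exactly $\delta$ children. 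Counting edges two ways then pins the number of internal vertices: if $\mathcal{T}$ has $p$ internal vertices then $|\alpha| = (\delta-1)p + 1$, so $\Phi(H)_{i,\alpha}$ vanishes automatically unless $|\alpha| \equiv 1 \pmod{\delta-1}$, and otherwise $p = (|\alpha|-1)/(\delta-1)$ is determined. Since the branching is bounded by $\delta$, a tree with $p > (\delta^m - 1)/(\delta - 1)$ internal vertices must have height at least $m$, hence contain a root-to-leaf path of length at least $m$. Thus for $|\alpha|$ large every contributing tree carries a long spine, and the problem reduces to extracting cancellation from these spines.

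The mechanism I would pursue is a collapsing argument indexed by such a spine. Fixing a canonical choice of spine $v_0, v_1, \dots, v_m$ in each tree (say the leftmost path realising the maximal height), I would partition $\mathbb{S}_{i,\alpha}$ into fibres that record all of the data off the spine --- the subtrees hanging from each $v_\ell$ together with their leaf labels --- while letting the spine types and outdegrees vary. The aim is that, after summing each hanging subtree over a complete set of leaf labellings, the residual sum over the spine configurations is exactly a length-$m$ fern sum $\sum_{\mathcal{T} \in \mathrm{Fern}_{i',\alpha',j'}(m)} \mathcal{E}_H(\mathcal{T})$, which is zero by hypothesis. If such a fibration could be arranged so that every fibre contributes a fern-sum factor, then $\Phi(H)_{i,\alpha}$ would collapse to zero for all $|\alpha|$ above the height threshold, delivering the finiteness asserted in the conjecture.

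The hard part --- and it is genuinely hard --- is that this fibration does not factor cleanly. The fern lemma controls a \emph{linear} object, the powers of $J(H)$, realised as energies summed over a single spine with independently varying attached leaves; but $\Phi(H)_{i,\alpha}$ is a fully \emph{nonlinear} sum, in which the subtrees hanging from the spine are themselves arbitrary proper trees whose internal branching is entangled with the spine. Isolating a true fern factor therefore demands summing the hanging subtrees over a complete leaf-labelling set \emph{independently} of the spine, and simultaneously controlling the overlaps created when a tree admits several long spines; neither is available for free. Making this propagation rigorous --- showing that the linear nilpotency of $J(H)$ annihilates the nonlinear tree expansion beyond a bounded degree --- is precisely the step that, through the reduction of Theorem \ref{thm:BCW}, carries the full weight of the Jacobian conjecture, and no existing method accomplishes it unconditionally. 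I would accordingly treat this single propagation step as the real target, and a realistic intermediate goal would be to establish it under the additional structure of the Dru\.zkowski or symmetric reductions, where the branching data is rigid enough that the fibration above might be forced to supply the required fern factors.
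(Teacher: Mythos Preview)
There is nothing to compare against: the paper does not prove Conjecture~\ref{JC3}. It is stated as a conjecture precisely because, via Theorem~\ref{thm:BCW} and Lemma~\ref{lem:fern}, it is equivalent to the Jacobian conjecture itself, which remains open. The paper's contribution here is the \emph{reformulation}, not a proof; the surrounding discussion merely explains why the fern hypothesis encodes nilpotency of $J(H)$ and why the finiteness of the $\Phi(H)_{i,\alpha}$ encodes polynomiality of the formal inverse.

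You clearly understand this: your own final paragraph says that the propagation step ``carries the full weight of the Jacobian conjecture, and no existing method accomplishes it unconditionally.'' That is exactly right, and it means your proposal is not a proof but a sketch of where a proof would have to succeed. The observations in your first paragraph (the degree congruence $|\alpha|\equiv 1\pmod{\delta-1}$, the height bound forcing a long spine once $p$ is large) are correct and standard; the fibration idea in your second paragraph is natural and is close in spirit to the approaches of Wright and Singer cited in the paper. But the obstruction you name --- that the hanging subtrees do not decouple from the spine, so one cannot cleanly extract a fern factor --- is the entire problem, not a technical wrinkle. Any honest assessment of this proposal is that it correctly locates the difficulty but does not overcome it, and neither does the paper.
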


To clarify, if Conjecture \ref{JC3} is true for any $\delta \geq 3$, then the Jacobian conjecture holds. 

We now make a few further remarks about further assumptions we can make in the setting of Conjecture \ref{JC3} based on the various reductions of the Jacobian conjecture:
\begin{itemize}
\item According to Dru\.zkowski's strengthening \cite{druzkowski} of the Bass, Connell and Wright reduction, we may assume further that $H$ is the cube of a linear map. This amounts to the existence of a matrix $\left( L_{i,j} \right)_{1 \leq i,j \leq N}$ such that each $H_{i,\alpha}$ has the form
\begin{align*}
H_{i,\mathbf{e}_a + \mathbf{e}_b + \mathbf{e}_c } = L_{i,a} L_{i,b} L_{i,c}.
\end{align*}
In fact, in the later work \cite{druzkowski2}, Dru\.zkowski showed that we may further assume $L^2 = 0$.
\item If we appeal to the alternative strengthening of the Bass, Connell and Wright result by de Bondt and Van den Essen \cite{DBVDE}, we may assume that $H$ has a symmetric Jacobian. By the Poincar\'e lemma, this amounts to the existence of a second collection of constants $(J_\beta : \beta \in \mathbb{Z}_{ \geq 0}^N : |\beta| = 4)$ such that
\begin{align*}
H_{i, \alpha } = J_{ \alpha + \mathbf{e}_i}.
\end{align*}
See for instance Zhao \cite{zhao}. Wright has studied analogous statements to Conjecture \ref{JC3} in the symmetric case --- though utilising a different power series inversion formula --- finding that the symmetry of the Jacobian matrix allows one to reformulate the problem in terms of unrooted trees \cite{wright3,wright4}.
\end{itemize}

It is worth emphasising as mentioned above that the Jacobian conjecture is true for quadratic maps \cite{lcwang}; equivalently, the $\delta = 2$ version of Conjecture \ref{JC3} is true. Unfortunately, to date there are no known combinatorial proofs of this fact, though Singer \cite{singer1,singer2,singer3} has made some inroads. We hope the formulation we have presented here will yield deeper insight into possible combinatorial approaches to the problem.

\section{The non-commutative case} \label{sec:free}

In this section, we now present the main results of this article concerning power series in $N$ free variables $X_1,\ldots,X_N$ such that in general
\begin{align*}
X_i X_j \neq X_j X_i.
\end{align*}

\subsection{Rings of power series in free variables and a free Fa\`a di Bruno formula}

Suppose again that $\mathbb{K}$ is a commutative ring, and let $\overrightarrow{R} := \mathbb{K} \langle \langle X_1,\ldots, X_N \rangle \rangle$ be the ring of formal power series in $N$ free indeterminates with coefficients in $\mathbb{K}$. Each element $f$ of $\overrightarrow{R}$ has the form
\begin{align*}
f ( X_1,\ldots, X_N) = \sum_{ k \geq 0} \sum_{  \kappa \in [N]^k } f_{  \kappa}  X_ \kappa
\end{align*}
where for $ \kappa = ( \kappa_1,\ldots, \kappa_k) \in [N]^k$, $X_ \kappa := X_{ \kappa_1} \ldots X_{ \kappa_k }$, and we use the convention that $[N]^0 := \{ \varnothing \}$, where $\varnothing$ is the empty tuple and $X_{\varnothing}$ refers to the monic constant polynomial in $\fR$. 

We have the following free analogue of the Leibniz rule, namely that if 
\begin{align*}
f(X_1,\ldots,X_N) g( X_1,\ldots, X_N) := \sum_{ k \geq 0} \sum_{ i \in [N]^k } h_ \kappa X_ \kappa,
\end{align*}
then each $h_ \kappa$ is given by 
\begin{align*}
h_{ \kappa_1,\ldots, \kappa_k } = \sum_{0 \leq j \leq k} f_{ \kappa_1,\ldots, \kappa_j} g_{ \kappa_{j+1}, \ldots,  \kappa_k }.
\end{align*}

\begin{rem} \label{rem:free}
In studying the composition of power series in free variables, the object we had in mind were polynomials in several matrix variables with complex coefficients. Indeed, we would like to take a moment to emphasise a few structural apsects of our set up:
\begin{itemize}
\item The coefficients $f_\kappa$ of our power series take values in a commutative ring.
\item The variables $X_1,\ldots,X_N$ are not assumed to commute with one another.
\item The coefficients commute with the variables.
\end{itemize}
The first point above is where our set up differs from related work by Brouder, Frabetti and Krattenthaler \cite{BFK} and Anshelevich, Effros and Popa \cite{AEP}. In both of these works on non-commutative power series, the coefficients themselves take values in a non-commutative algebra, which gives rise to significantly differences which we discuss at the end of this section.
\end{rem}

In order to give our free Fa\`a di Bruno formula, we require definitions surrounding \emph{planar trees}. A rooted planar tree is a triplet $(V,E,r)$ where $(V,E)$ is a rooted tree, and $r$ is a function ranking the children of each vertex: so that whenever $v$ is a vertex and the set $\{ w : \text{$w$ is a child of $v$} \}$ of children of $v$ has cardinality $k$, $r$ is a bijection between the set of children of $v$ and $\{1,\ldots,k\}$. The function $r$ gives rise to a planar embedding of the tree: by drawing the children of a vertices from left to right according to their rankings. Two planar trees $(V,E,r)$ and $(V',E',r')$ are considered isomorphic if there is a rank-preserving graph isomorphism between the underlying vertex sets.

Suppose a rooted planar tree $(V,E,r)$ has $k$ leaves. The ranking induces an ordering on the leaves of the vertex set: so that we label the leaves $\{v_1,\ldots,v_k \}$ according to their positions clockwise in the plane; see Figure \ref{fig:planar tree}. We refer to $v_j$ as the $j^{\text{th}}$ leaf.

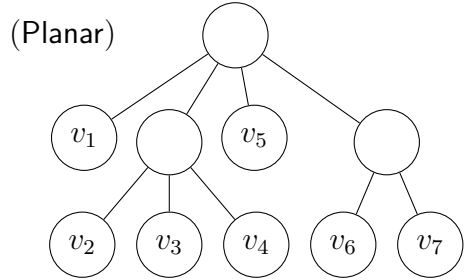
\begin{figure}[ht]
  \centering   
 \begin{forest}
[,name=L1,
[$v_1$, name=A]
[
[$v_2$]
[$v_3$]
[$v_4$]
]
[$v_5$]
[
[$v_6$]
[$v_7$, name=B]
]
]
\node (a) [left=of L1] {($\mathsf{Planar}$)};
\end{forest}
  \caption{A planar tree with seven leaves.}\label{fig:planar tree}
\end{figure}

We will be interested in labelled rooted planar trees. Let $\kappa = (\kappa_1,\ldots,\kappa_k) \in [N]^k$ be a $k$-tuple of elements of $[N]$, and let $\fT_{i,\kappa}$ denote the set of labelled rooted planar trees with root type $i$ and leaf types $\kappa$. In other words, $\fT_{i,\kappa}$ consists of quadruplets $\freeT = (V,E,r,\tau)$ such that
\begin{itemize}
\item The triplet $(V,E,r)$ is a rooted planar tree.
\item The type function $\tau$ is any function $\tau:V \to [N]$ such that if $v_j$ is the $j^{\text{th}}$ leaf of $(V,E,r)$, then $\tau(v_k) = \kappa_j$. 
\end{itemize} 
In analogy to the commutative case, we will consider three subsets of $\fT_{i,\kappa}$:
\begin{itemize}
\item The subset $\fS_{i,\kappa}$ of $\fT_{i,\kappa}$ consisting of the proper trees --- i.e. those trees where every internal vertex has two or more children.
\item The subset $\fF_{i,\kappa}(m)$ of $\fT_{i,\kappa}$ consisting of the final trees of length $m$ --- i.e. those labelled planar trees where every leaf occurs in generation $m$. 
\item The subset $\fA_{i,\kappa}$ of $\fT_{i,\kappa}$ consisting of the alternating trees --- i.e. those trees where every vertex in an even generation has exactly one child, and everyone vertex in an odd generation is either a leaf or has two or more children. 
\end{itemize}

Our free Fa\`a di Bruno's formula in $N$ non-commutative variables is given in terms of final labelled planar trees, and our inversion formula for power series in $N$ non-commutative variables is given in terms of proper labelled planar trees. See for example Figure \ref{fig:quick free tree} above, which depicts an element of $\fT_{5,\kappa}$, where $\kappa = (1,2,1,3)$. 

We now define energies for non-commutative trees. Unsurprisingly perhaps, the energies themselves also depend on the planar structure of the tree. For an internal vertex $v$ of a labelled rooted planar tree with its children listed in ranked order $w_1,\ldots,w_k$, we define the \emph{free outdegree} of $v$ to be the tuple $\fmu(v) := (\tau(w_1),\ldots,\tau(w_k)$. See Figure \ref{fig:v2}.

\begin{figure}[ht!]
  \centering   
 \begin{forest}
[3, fill=col3, name=L1,
[3, fill=col3]
[4, fill=col4]
[1, fill=col1]
]
\node (a) [left=of L1] {($\mathsf{Planar}$)};
\end{forest}
  \caption{An internal vertex $v$ in a labelled planar tree. This vertex $v$ has type $\tau(v) = 3$ and free outdegree $\protect\fmu(v) = (3,4,1)$. }\label{fig:v2}
\end{figure}
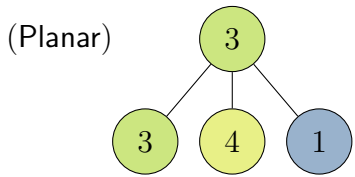

Let $\fR^N$ denote the set of $F = (F_1,\ldots,F_N)$, where each component $F_i$ of $F$ is an element of $\fR$. Suppose $F^* = ( F^{(1)},\ldots,F^{(m)})$ is a sequence of elements of $\fR^N$. We define the non-commutative $F^*$-energy of a labelled planar tree $\freeT = (V,E,r,\tau)$ in $\fF_{i,\kappa}(m)$ by 
\begin{align*}
\mathcal{E}_{F^*} \left( \freeT \right) = \prod_{ l = 1}^m \prod_{ v \in V_{l-1} } F^{(l)}_{ \tau(v), \fmu(v) } .
\end{align*}

We write $\fR^N_0$ for the subset of $\fR^N$ consisting of power series with no constant term, i.e. $F$ such that $F_i(0,0,\ldots,0) = 0$ for each $i \in [N]$. We are now equipped to state our free version of Fa\`a di Bruno's formula. 

\begin{thm} \label{thm:free fdb}
Suppose that $F^{(1)},\ldots,F^{(m)}$ are elements of $\fR^N_0$, so that the $i^{\text{th}}$ component of $F^{(l)}$ is given by 
\begin{align*}
F^{(l)}_i(X_1,\ldots,X_N) := \sum_{ k \geq 1} \sum_{ \kappa \in [N]^k } F_{i,\kappa} X_\kappa.
\end{align*}
Then the coefficient of $X_\kappa$ in the $i^{\text{th}}$ component of the composition $F^{(1)} \circ \ldots \circ F^{(m)}$ is given by 
\begin{align*}
\left( F^{(1)} \circ \ldots \circ F^{(m)} \right)_{ i , \kappa } =  \sum_{ \mathcal{T} \in \fF_{i,\kappa}( m) } \mathcal{E}_{F^*} \left( \freeT \right) .
\end{align*}
\end{thm}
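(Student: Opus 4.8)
The plan is to prove Theorem \ref{thm:free fdb} by induction on the length $m$ of the composition, in direct parallel with the proof strategy for the commutative Theorem \ref{thm:main} (and its power-series consequence \eqref{eq:fdbpower}), the only genuinely new ingredient being bookkeeping of the planar (left-to-right) order of leaves.

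\textbf{Base case.} For $m=1$ there is nothing to prove: $\fF_{i,\kappa}(1)$ consists of a single planar tree, the root (of type $i$) with $k$ leaves of types $\kappa_1,\ldots,\kappa_k$ in left-to-right order, this tree has exactly one internal vertex $v_0$ with $\fmu(v_0)=\kappa$, and its energy is $F^{(1)}_{i,\kappa}$, which is precisely the coefficient of $X_\kappa$ in $F^{(1)}_i$.

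\textbf{Inductive step.} Assume the formula holds for compositions of length $m-1$. Write $G := F^{(2)} \circ \ldots \circ F^{(m)}$, so that $F^{(1)} \circ \ldots \circ F^{(m)} = F^{(1)} \circ G$. First I would establish the two-term free Fa\`a di Bruno identity
\[
(F^{(1)} \circ G)_{i,\kappa} = \sum_{k' \geq 0} \sum_{\lambda \in [N]^{k'}} F^{(1)}_{i,\lambda} \; \big( G \text{-contribution to } X_\kappa \text{ from substituting } G_{\lambda_1}, \ldots, G_{\lambda_{k'}} \big),
\]
which comes from expanding $F^{(1)}_i(G_1,\ldots,G_N) = \sum_{\lambda} F^{(1)}_{i,\lambda} G_{\lambda_1} \cdots G_{\lambda_{k'}}$ and then using the iterated free Leibniz rule stated just before Remark \ref{rem:free}: the coefficient of $X_\kappa = X_{\kappa_1}\cdots X_{\kappa_k}$ in a product $G_{\lambda_1} \cdots G_{\lambda_{k'}}$ is the sum over all ways of cutting the word $\kappa$ into $k'$ consecutive (possibly empty) subwords $\kappa = \kappa^{(1)} \cdots \kappa^{(k')}$ of $\prod_{t=1}^{k'} G_{\lambda_t, \kappa^{(t)}}$. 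This is the step where non-commutativity forces \emph{ordered} set partitions of $[k]$ into intervals rather than arbitrary set partitions — exactly the "free variables means planar trees" heuristic. Then I substitute the inductive hypothesis $G_{\lambda_t, \kappa^{(t)}} = \sum_{\mathcal{T}_t \in \fF_{\lambda_t,\kappa^{(t)}}(m-1)} \mathcal{E}_{(F^{(2)},\ldots,F^{(m)})}(\mathcal{T}_t)$ for each factor.

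\textbf{Reassembly.} The combinatorial heart is a bijection: a choice of $\lambda \in [N]^{k'}$, an ordered interval decomposition $\kappa = \kappa^{(1)}\cdots\kappa^{(k')}$, and a tuple $(\mathcal{T}_1,\ldots,\mathcal{T}_{k'})$ with $\mathcal{T}_t \in \fF_{\lambda_t,\kappa^{(t)}}(m-1)$, corresponds exactly to a single tree $\mathcal{T} \in \fF_{i,\kappa}(m)$: create a root $v_0$ of type $i$ with $k'$ children ranked $1,\ldots,k'$, give child $t$ type $\lambda_t$, and graft $\mathcal{T}_t$ onto child $t$. One checks that (i) the resulting tree is a legitimate labelled planar tree with root type $i$, (ii) every leaf still lies in generation $m$ (since each grafted subtree had depth $m-1$ and now sits below generation $1$), (iii) the left-to-right leaf order of $\mathcal{T}$ reads off $\kappa^{(1)}\cdots\kappa^{(k')} = \kappa$ precisely because the children are ranked $1,\ldots,k'$ and planar leaf order is inherited subtree-by-subtree, so $\mathcal{T} \in \fF_{i,\kappa}(m)$, and (iv) this correspondence is a bijection, with inverse obtained by deleting $v_0$ and recording the types/subtrees hanging from its ranked children (with $\kappa^{(t)}$ being the leaf word of the $t$-th subtree, an interval of $\kappa$, possibly empty if that child is a leaf in generation $1$ — wait, no: in a final tree a generation-$1$ vertex with no children would be a leaf not in generation $m$ unless $m=1$, so for $m \geq 2$ every child of $v_0$ is internal; the empty-interval case corresponds to a child that is internal but all of whose descendants... actually an internal generation-1 vertex can still have leaves only in generation $m$, and $\kappa^{(t)}$ is never empty for such a vertex, so empty intervals occur only when $k'$ exceeds the number of "used" slots — I should double check that the free Leibniz expansion's empty-subword terms match trees where some generation-1 child subtree has zero leaves, which is impossible for $m\geq 2$; hence for $m \geq 2$ the sum over $\lambda$ effectively restricts to $k' \leq k$, consistent). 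Finally, under this bijection the energy factors match: $\mathcal{E}_{F^*}(\mathcal{T}) = F^{(1)}_{i,\fmu(v_0)} \prod_{t=1}^{k'} \mathcal{E}_{(F^{(2)},\ldots,F^{(m)})}(\mathcal{T}_t) = F^{(1)}_{i,\lambda} \prod_t \mathcal{E}_{(F^{(2)},\ldots,F^{(m)})}(\mathcal{T}_t)$, since $\fmu(v_0) = (\tau(w_1),\ldots,\tau(w_{k'})) = \lambda$ and the generation indices in the product defining $\mathcal{E}_{F^*}$ for the grafted part shift by one, matching the energy of $\mathcal{T}_t$ as a tree of length $m-1$.

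\textbf{Main obstacle.} The one subtle point — and the step I expect to need the most care — is the interplay between \emph{empty subwords} in the free Leibniz expansion and the combinatorial structure of \emph{final} trees for general $m$. One must argue cleanly that for $m \geq 2$ a generation-$1$ vertex cannot be a leaf (so it contributes a nonempty interval $\kappa^{(t)}$ and has a well-defined free outdegree from its own children one generation down), while simultaneously ensuring no tree in $\fF_{i,\kappa}(m)$ is missed or double-counted; phrasing the induction in terms of the generic composition length and being scrupulous about the $m=1$ versus $m \geq 2$ distinction (exactly as the commutative proof in Section \ref{sec:fdbproof} presumably does) resolves this. Everything else is a routine transcription of the commutative argument with "ordered interval partition" in place of "set partition."
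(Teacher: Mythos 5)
Your proposal is correct and takes essentially the same route as the paper: the paper first proves the two-term identity \eqref{eq:free1} by reducing, via linearity, to monomial $F_i = X_{\gamma_1}\cdots X_{\gamma_j}$ and identifying the ordered interval decompositions $1 \leq l_1 < \cdots < l_{j-1} < k$ of $\kappa$ with planar trees in $\fF_{i,\kappa}(2)$, and then inducts on $m$ by peeling off $F^{(1)}$, expanding each $G_{\tau(v),\fmu(v)}$ with the inductive hypothesis, and grafting the resulting trees onto the generation-$1$ vertices --- exactly your argument. The ``main obstacle'' you flag dissolves at once: $G = F^{(2)} \circ \cdots \circ F^{(m)}$ again lies in $\fR^N_0$, so any empty subword contributes a factor $G_{\lambda_t,\varnothing} = 0$, and only nonempty interval decompositions survive, which is precisely why the paper's sum is indexed by strictly increasing cut points and matches the final trees bijectively.
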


\begin{example}

Suppose $F$ and $G$ are elements of $\fR^N$, and we would like to find the coefficient $(F \circ G)_{2, (3,1,1)}$ of $X_3 X_1^2$ in the $2^{\text{nd}}$ component of the composition $F \circ G$. Then according to Theorem \ref{thm:free fdb}, $( F \circ G)_{2, (3,1,1) }$ may be written as a sum over trees in $\fF_{2,(3,1,1)}(2)$. Figure \ref{fig:v3} depicts the set of trees in $\fF_{2,(3,1,1)}$ as the types $a,b,c$ of the internal vertices range over $[N]$.

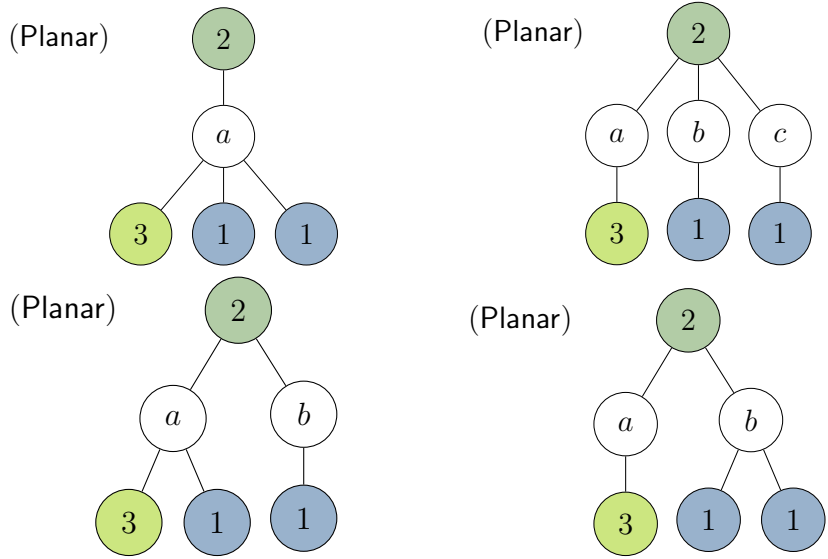
\begin{figure}[htbp]
\begin{tabular}{C{.32\textwidth}C{.32\textwidth}}
{
\resizebox{0.25\textwidth}{!}{%
\begin{forest}
[2, fill=col2, name=L1,
[$a$,
[3, fill=col3]
[1, fill=col1]
[1, fill=col1]
]
]
\node (a) [left=of L1] {($\mathsf{Planar}$)};
\end{forest}
}}&
{
    \resizebox{0.25\textwidth}{!}{%
\begin{forest}
[2, fill=col2, name=L2,
[$a$,
[3, fill=col3]
]
[$b$,
[1, fill=col1]
]
[$c$,
[1, fill=col1]
]
]
\node (a) [left=of L2] {($\mathsf{Planar}$)};
\end{forest}
}}\\ 
{
    \resizebox{0.25\textwidth}{!}{%
\begin{forest}
[2, fill=col2,name=L3,
[$a$,
[3, fill=col3]
[1, fill=col1]
]
[$b$,
[1, fill=col1]
]
]
\node (a) [left=of L3] {($\mathsf{Planar}$)};
\end{forest}
}}& 
{
    \resizebox{0.27\textwidth}{!}{%
\begin{forest}
[2, fill=col2, name=L4,
[$a$,
[3, fill=col3]
]
[$b$,
[1, fill=col1]
[1, fill=col1]
]
]
\node (a) [left=of L4] {($\mathsf{Planar}$)};
\end{forest}
}}
\end{tabular}
\caption{The four possible planar topologies for trees in $\protect\fF_{i,\kappa}(2)$, where $i = 2$ and $\kappa = (3,1,1)$. The types $a,b,c$ may take any value in $\protect[N]$. } \label{fig:v3}
\end{figure}
In particular, if $F,G$ are elements of $\fR^N$, then 
\begin{align*}
(F \circ G)_{ 2, (3,1,1) } = &\sum_{ a \in [N] } F_{2,a} G_{a, (3,1,1)} + \sum_{ a,b,c \in [N] } F_{2,(a,b,c)} G_{a,3} G_{b,1} G_{c,1}\\
& + \sum_{ a,b \in [N] } G_{ 2, (a,b) } F_{a, (3,1)} F_{b,1} + \sum_{ a,b \in [N] } F_{2,(a,b) } G_{a,3} G_{b, (1,1)}.
\end{align*}

\end{example}

\subsection{Inversion of power series in free variables}

We now turn to compositional inversion of elements of $\fR^N$. Consider first of all the compositional identity element of $\fR^N$ --- the element $I$ of $\fR^N$ whose $i^{\text{th}}$ component is given by $X_i$. Namely given an element $F$ of $\fR^N$, we are interested in identifying whether $F$ has a compositional inverse $G$ satisfying $F \circ G = G \circ F = X$, and in identifying the coefficients of the inverse. 

Like in the commutative case, we begin by considering the case where the linear term 
\begin{align*}
J(F)(0) := \left( F_{i,(j)} \right)_{1 \leq i,j \leq N}
\end{align*}
of $F$ is the identity matrix. Indeed, given a free $N$-dimensional power series whose $i^{\text{th}}$ component has the form $H(X_1,\ldots,X_N) := \sum_{ k \geq 2} \sum_{ \kappa \in [N]^k } H_{i, \kappa} X_\kappa$, define the planar $H$-energy of a planar tree $\freeT$ by 
\begin{align*}
\mathcal{E}_H \left( \freeT \right) := \prod_{ v \in I} H_{\tau(v),\fmu(v)}.
\end{align*}

\begin{thm} \label{thm:free inversion}
Let $F$ be an element of $\fR^N$ whose $i^{\text{th}}$ component has the form
\begin{align*}
F_i(X_1,\ldots,X_N) = X_i - \sum_{ k \geq 2} \sum_{ \kappa \in [N]^2 } H_{i,\kappa} X_\kappa.
\end{align*}
Then there exists an element $G$ of $\fR^N$ such that $G$ is a compositional inverse of $F$ in that $F \circ G = G \circ F = X$. Moreover, $G$ has the form
\begin{align*}
G_i(X_1,\ldots,X_N) = X_i + \sum_{ k \geq 2} \sum_{ \kappa \in [N]^k } G_{i,\kappa} X_\kappa,
\end{align*}
where the coefficient $G_{i,\kappa}$ is given in terms of the sum
\begin{align*}
G_{i,\kappa} = \sum_{ \freeT \in \fS_{i,\kappa} }  \mathcal{E}_H \left( \freeT \right).
\end{align*}
\end{thm}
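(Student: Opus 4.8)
The plan is to mirror the strategy used for the commutative Theorem \ref{thm:inversion}, replacing set partitions and rooted trees by their planar analogues, and using the free Fa\`a di Bruno formula (Theorem \ref{thm:free fdb}) in place of its commutative counterpart. First I would establish existence and uniqueness of a \emph{left} inverse $G$ by a degree-by-degree recursion: writing $G_i = X_i + \sum_{k\geq 2}\sum_{\kappa\in[N]^k} G_{i,\kappa}X_\kappa$ with the $G_{i,\kappa}$ as yet undetermined, the condition $(F\circ G)_{i,\kappa} = \delta$ (equal to $1$ if $\kappa=(i)$ and $0$ otherwise) is, by Theorem \ref{thm:free fdb} with $m=2$, a system
\begin{align*}
\sum_{\freeT\in\fF_{i,\kappa}(2)} F_{\tau(v_0),\fmu(v_0)}\prod_{v\in V_1} G_{\tau(v),\fmu(v)} = \delta_{\kappa,(i)}.
\end{align*}
Isolating the term of this sum corresponding to the tree whose root has a single child (which contributes $F_{i,(i)}G_{i,\kappa} = G_{i,\kappa}$, using $F_{i,(i)}=1$ and $F_{i,(j)}=0$ for $j\neq i$), and moving everything else to the other side, expresses $G_{i,\kappa}$ for $|\kappa|\geq 2$ in terms of coefficients $G_{j,\lambda}$ with $|\lambda|<|\kappa|$. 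This determines $G$ uniquely and proves it is a left inverse.

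Next I would identify the closed form. Substituting $F_{i,(i)}=1$, $F_{i,(j)}=0$ for $j\neq i$, and $F_{i,\kappa} = -H_{i,\kappa}$ for $|\kappa|\geq 2$ into the recursion above yields
\begin{align*}
G_{i,\kappa} = \sum_{\substack{\freeT\in\fF_{i,\kappa}(2)\\ \text{root has}\ \geq 2\ \text{children}}} H_{\tau(v_0),\fmu(v_0)} \prod_{v\in V_1} G_{\tau(v),\fmu(v)}.
\end{align*}
Now I would argue by strong induction on $|\kappa|$ that $G_{i,\kappa}=\sum_{\freeT\in\fS_{i,\kappa}}\mathcal{E}_H(\freeT)$. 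Given a proper planar tree $\freeT$ in $\fS_{i,\kappa}$, its root has $k\geq 2$ ranked children of types $\tau(w_1),\dots,\tau(w_k)$, i.e. free outdegree $\fmu(v_0)$; deleting the root splits $\freeT$ into an ordered list of proper planar subtrees $\freeT_1,\dots,\freeT_k$, where $\freeT_r$ lies in $\fS_{\tau(w_r),\kappa^{(r)}}$ and $\kappa = \kappa^{(1)}\cdots\kappa^{(k)}$ is the concatenation induced by the left-to-right order of the leaves (this is where planarity is essential: the decomposition of the leaf word into consecutive blocks is forced by the embedding). Each $|\kappa^{(r)}|<|\kappa|$, so by induction $G_{\tau(w_r),\kappa^{(r)}} = \sum_{\freeT_r\in\fS_{\tau(w_r),\kappa^{(r)}}}\mathcal{E}_H(\freeT_r)$, and since $\mathcal{E}_H(\freeT) = H_{i,\fmu(v_0)}\prod_r \mathcal{E}_H(\freeT_r)$, summing over all topologies reproduces exactly the recursion for $G_{i,\kappa}$. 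Conversely every term on the right of the recursion is $\mathcal{E}_H$ of a unique such assembled tree, so the two sums agree.

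Finally I would show that the left inverse is also a right inverse. The cleanest route is purely algebraic: composition of free power series with zero constant term and invertible linear part forms a monoid under $\circ$ (associativity is the $m=3$ case of Theorem \ref{thm:free fdb}), with identity $I=(X_1,\dots,X_N)$; applying the left-inverse construction to $G$ itself produces some $G'$ with $G'\circ G = I$, and then $G' = G'\circ(G\circ F) = (G'\circ G)\circ F = F$, whence $G\circ F = I$ as well. I expect the main obstacle to be a careful treatment of the planar subtree decomposition in the induction step — specifically verifying that the map $\freeT\mapsto(\freeT_1,\dots,\freeT_k)$ together with the root data $\fmu(v_0)$ is a bijection onto ordered tuples of proper planar trees with concatenated leaf words, and that it is energy-multiplicative. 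Everything else is bookkeeping analogous to the commutative case, with the recurring slogan "free variables means planar trees" doing the conceptual work.
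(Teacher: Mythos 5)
Your route is genuinely different from the paper's. The paper proves this theorem by pure verification: it takes $G$ \emph{defined} by the tree sum, applies the $m=2$ free Fa\`a di Bruno formula to $F\circ G$, splits off the trees whose root has a single child, and shows via the glueing bijection that the remaining sum telescopes to $G_{i,\kappa}$, so $(F\circ G)_{i,\kappa}=0$ for $|\kappa|\geq 2$; the identity $G\circ F=I$ is then stated to follow by a similar computation left to the reader. You instead run the constructive argument (the free analogue of the paper's second proof of Theorem \ref{thm:inversion}, which the paper explicitly omits in the free case): solve $F\circ G=I$ degree by degree, identify the solution by strong induction using the root-deletion decomposition of a proper planar tree into an ordered tuple of proper planar subtrees with concatenated leaf words, and then deduce two-sidedness abstractly from associativity. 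Both are sound; your version yields uniqueness of the inverse for free, while the paper's is shorter because it never needs the well-foundedness of the recursion. Your decomposition $\freeT\mapsto(\fmu(v_0);\freeT_1,\dots,\freeT_k)$ is exactly the paper's glueing map read in the opposite direction, so the combinatorial core is the same bijection.

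One concrete slip in the final step: the chain $G'=G'\circ(G\circ F)=(G'\circ G)\circ F=F$ is circular, since its first equality already uses $G\circ F=I$, which is what you are trying to prove; moreover your construction applied to $G$ produces $G'$ with $G\circ G'=I$, not $G'\circ G=I$. The standard repair is: from $F\circ G=I$ and $G\circ G'=I$ deduce $G'=(F\circ G)\circ G'=F\circ(G\circ G')=F$, hence $G\circ F=G\circ G'=I$. With that correction (and the trivially checked linear case $G_{j,(l)}=\delta_{j,l}$, which is needed in the induction when a child of the root is itself a leaf, i.e.\ when some $\kappa^{(r)}$ has length one), the proposal is complete.
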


Where Theorem \ref{thm:free inversion} was concerned with inverting power series with identical linear terms, we now turn to the case where the linear term is non-identical, which we find parallels the commutative case. Given a matrix $(Q_{i,\mathbf{e}_j})$, and a collection of elements $\left( H_{i,\kappa} : i \in [N], \kappa \in [N]^k, k \geq 2 \right)$, we define the planar $(H,Q)$ energy of an alternating planar tree $\freeT$ by 
\begin{align*}
\mathcal{E}_{H,Q}( \freeT ) := \prod_{ \text{$v$ even} } Q_{\tau(v),\fmu(v)} \prod_{ \text{$v$ odd}} H_{\tau(v), \fmu(v) }.
\end{align*}

\begin{thm} \label{thm:free inversion general}
Let $F$ be an element of $\fR^N$ whose $i^{\text{th}}$ component has the form
\begin{align*}
F_i(X_1,\ldots,X_N) = \sum_{ j= 1}^N P_{i,j} X_j - \sum_{ k \geq 2} \sum_{ \kappa \in [N]^k} H_{i,\kappa} X_\kappa.
\end{align*}
Then if $P$ is an invertible matrix, then $G$ has a composition inverse. Moreover, if $Q$ is the matrix inverse of $P$, then the $i^{\text{th}}$ component of $G$ has the form
\begin{align*}
G_i(X_1,\ldots,X_N) = \sum_{ j  = 1}^N Q_{i,j} X_j + \sum_{ k \geq 2} \sum_{ \kappa \in [N]^k } G_{i,\kappa} X_\kappa
\end{align*}
where for $k \geq 2$ and $\kappa \in [N]^k$, the coefficients of $G$ are given by 
\begin{align*}
G_{i,\kappa} := \sum_{ \freeT \in \fA_{i,\kappa} } \mathcal{E}^{\mathsf{alt}}_{H,Q} ( \freeT ).
\end{align*}
\end{thm}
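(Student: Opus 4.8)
The plan is to follow the proof of the commutative analogue, Theorem~\ref{thm:general inversion}, using the free Fa\`a di Bruno formula (Theorem~\ref{thm:free fdb}) with $m = 2$ as the main engine, under the rule of thumb that set partitions are replaced by ordered decompositions of the leaf word.

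First I would record two routine well-definedness facts: an alternating planar tree with $k$ leaves has at most $k-1$ internal odd-generation vertices (each has at least two children, exactly as for proper trees) while every even-generation vertex has exactly one child, so $\fA_{i,\kappa}$ is a finite set and each $G_{i,\kappa}$ is a genuine element of $\K$; and since the power series involved lie in $\fR^N_0$, every coefficient of a composition is a finite sum. Applying Theorem~\ref{thm:free fdb} with $F^{(1)} = F$, $F^{(2)} = G$ and decomposing a final planar tree of length~$2$ according to the number $r\ge 1$ and ranked types $(a_1,\dots,a_r)$ of the root's children, together with the induced splitting $\kappa = \kappa^{(1)}\cdots\kappa^{(r)}$ of the leaf word into nonempty factors, gives
\[
(F \circ G)_{i,\kappa} = \sum_{r \ge 1} \; \sum_{\kappa = \kappa^{(1)}\cdots\kappa^{(r)}} \; \sum_{a_1,\dots,a_r \in [N]} F_{i,(a_1,\dots,a_r)} \prod_{j=1}^{r} G_{a_j,\kappa^{(j)}}.
\]
Imposing $F \circ G = X$ and separating the $r = 1$ term (where $F_{i,(a)} = P_{i,a}$) from the $r \ge 2$ terms (where $F_{i,(a_1,\dots,a_r)} = -H_{i,(a_1,\dots,a_r)}$) yields at word-length~$1$ the relation $P\,G^{\mathrm{lin}} = I$ (so $G^{\mathrm{lin}} = Q := P^{-1}$, using the hypothesis that $P$ is invertible), and at word-length $\ge 2$, after left-multiplying by $Q$, the recursion
\[
G_{l,\kappa} = \sum_{i \in [N]} Q_{l,i} \sum_{r \ge 2}\; \sum_{\kappa = \kappa^{(1)}\cdots\kappa^{(r)}} \;\sum_{a_1,\dots,a_r} H_{i,(a_1,\dots,a_r)} \prod_{j=1}^{r} G_{a_j,\kappa^{(j)}}.
\]
Since each $\kappa^{(j)}$ is strictly shorter than $\kappa$ when $r \ge 2$, this recursion together with $G^{\mathrm{lin}} = Q$ determines every coefficient $G_{i,\kappa}$ uniquely.

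Next I would show that $\sum_{\freeT \in \fA_{i,\kappa}} \mathcal{E}^{\mathsf{alt}}_{H,Q}(\freeT)$ solves this recursion, by strong induction on $|\kappa|$. The root of a tree $\freeT \in \fA_{i,\kappa}$, being even, has a unique child $u$, contributing the factor $Q_{i,\tau(u)}$. If $|\kappa| = 1$ then $u$ is the unique leaf and $\tau(u) = \kappa_1$, so the sum equals $Q_{i,\kappa_1}$, matching $G^{\mathrm{lin}}$. If $|\kappa| \ge 2$ then $u$ is odd with $r \ge 2$ children $w_1,\dots,w_r$ of types $a_1,\dots,a_r$, contributing $H_{\tau(u),(a_1,\dots,a_r)}$, while the planar subtree hanging below each $w_j$ is itself an alternating tree in $\fA_{a_j,\kappa^{(j)}}$ rooted at $w_j$, where $\kappa^{(1)}\cdots\kappa^{(r)} = \kappa$ is the induced word splitting; the energy factorizes over these subtrees because the even/odd parity of every vertex is unchanged. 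Summing over all $\freeT$ reproduces the displayed recursion with each $G_{a_j,\kappa^{(j)}}$ replaced by the corresponding subtree sum, which equals $G_{a_j,\kappa^{(j)}}$ by the induction hypothesis. Hence $G_{i,\kappa} = \sum_{\freeT \in \fA_{i,\kappa}} \mathcal{E}^{\mathsf{alt}}_{H,Q}(\freeT)$ and the $G$ so defined is a right inverse of $F$.

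Finally I would upgrade ``right inverse'' to ``two-sided inverse''. The construction above shows that \emph{every} element of $\fR^N$ with invertible linear term has a right inverse whose linear term is again invertible; applying this to $G$ (which has linear term $Q = P^{-1}$) produces some $G''$ with $G \circ G'' = X$, and using associativity of free composition (itself a consequence of Theorem~\ref{thm:free fdb}),
\[
F = F \circ X = F \circ (G \circ G'') = (F \circ G) \circ G'' = X \circ G'' = G'',
\]
so $G \circ F = G \circ G'' = X$ and $G$ is a genuine compositional inverse of $F$. I expect the main difficulty to lie in the bookkeeping of the planar decomposition --- correctly matching the left-to-right concatenation of leaf words with the subtree structure of alternating planar trees and checking that the energy factorizes through the single-child ``connector'' vertices in even generations --- rather than in any conceptual point; the short one-sided-to-two-sided argument is nonetheless essential and easy to overlook in the noncommutative setting.
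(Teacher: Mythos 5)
Your proof is correct, but it follows a genuinely different route from the paper. The paper proves Theorem \ref{thm:free inversion general} by reduction to the identity-linear-term case: it sets $\widetilde{F} := F \circ Q$, notes $\widetilde{F} = X - \widetilde{H}$ with $\widetilde{H} = H \circ Q$, invokes Theorem \ref{thm:free inversion} to write the coefficients of $\widetilde{G}$ as sums of $\mathcal{E}_{\widetilde{H}}$-energies over proper planar trees in $\fS_{i,\kappa}$, takes $G = Q \circ \widetilde{G}$, and then converts each proper-tree energy in the composite coefficients $\widetilde{H} = H\circ Q$ into a sum of $(H,Q)$-energies over alternating trees via the collapse map (the planar analogue of the projection $\mathsf{P}:\mathbb{A}_{i,\alpha}\to\cup_j\mathbb{S}_{j,\alpha}$ used for Theorem \ref{thm:general inversion}). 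You instead work directly with the system of equations produced by the $m=2$ case of Theorem \ref{thm:free fdb} applied to $F\circ G = X$, solve the resulting recursion by strong induction on the length of the leaf word, and verify that the alternating-tree sums satisfy it through the root decomposition (even root with its single odd child, then the planar subtrees hanging below the grandchildren); this is in the spirit of the paper's second, constructive proof of Theorem \ref{thm:inversion}, transplanted to the free, non-identity-linear setting without passing through $\widetilde{F}$ and $\widetilde{G}$. What the paper's route buys is economy --- it reuses Theorem \ref{thm:free inversion} and the already-proved commutative tree surgery almost verbatim; what your route buys is self-containedness, an explicit uniqueness statement for the inverse coefficients coming from the recursion, and, notably, a clean treatment of the two-sidedness of the inverse via $G'' = (F\circ G)\circ G'' = F\circ(G\circ G'') = F$, a point the paper dispatches with ``easily verified'' and ``left to the reader.'' Your bookkeeping of the planar decomposition (nonempty consecutive factors $\kappa = \kappa^{(1)}\cdots\kappa^{(r)}$, parity preservation in the subtrees, finiteness of $\fA_{i,\kappa}$) is accurate, so no gaps remain.
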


We conclude this section by discussing further the aforementioned work of Brouder, Frabetti and Krattenthaler \cite{BFK} and Anshelevich, Effros and Popa \cite{AEP}, who study the composition and inversion of non-commutative power series in non-commuting variables through the apparatus of incidence algebras. The former paper \cite{BFK} is concerned with a single-variable and the latter \cite{AEP} with many variables. As we said above, the set ups of these works differ from ours in that they assume the coefficients themselves of the series take values in a non-commutative algebra --- though the coefficients are assumed to commute with the variables. The non-commutativity of the coefficients gives rise to several interesting structural differences. Most notably, the `composition' of power series need not be associative, so that when the coefficients are non-commutative, we have
\begin{align*}
\left( F \circ G \right) \circ H \neq F \circ \left( G \circ H \right)
\end{align*}
in general. (To use the example supplied in \cite{AEP}, suppose $N = 1$ and consider the power series $F(X_1) = X_1^2, G(X_1) = b X_1$ and $H(X_1) = aX_1$, where the coefficients $a$ and $b$ do not commute.) An interesting further consequence of having non-commutative coefficients is that right and left compositional inverses of a power series need not coincide \cite[Corollary 11]{AEP}. In fact, Anshelevich et al. \cite{AEP} are hesitant to use the word composition for these reasons, preferring the term \emph{power series substitution}. In summary, Theorem \ref{thm:free inversion} is connected with \cite[Theorem 8]{AEP}, with various reductions and symmetries arising in our case thanks to the commutativity of the power series coefficients.

More broadly, the study of polynomials and power series in non-commutive variables and with non-commuting coefficients dates back at least to the start of the $20^{\text{th}}$ century (see e.g. Ore \cite{ore}). 
To this day this remains an active research area, with Fa\`a di Bruno's formula appearing in a multitude of non-commutative settings. This in constrast to our set up ---- see Remark \ref{rem:free}. On this front we also mention the recent work of Frabetti and Shestakov \cite{FS}, who among other things obtain a generalisation of the Lagrange-inversion formula for series with non-commutative coefficients.

\subsection{A non-commutative Jacobian conjecture}
We conclude the discussion of power series in free variables by exploring possible non-commutative analogues to the Jacobian conjecture. 

Recall that in the commutative setting, the Jacobian conjecture asserts that in order for polynomial mapping $F \in R^N$ with coefficients in $\mathbb{C}$ to have a polynomial compositional inverse, it is sufficient to check that the determinant of the Jacobian matrix is equal to a non-zero complex constant. When we attempt to develop a parallel statement in the case where the variables $X_1,\ldots,X_N$ do not commute, we find that certain structural aspects of the problem are different.

Indeed, like in the commutative case we say an element $F = \left(F_{i,\kappa} : i \in [N], \kappa \in [N]^k , k \geq 0 \right)$ of $\fR^N$ is a polynomial mapping if only finitely many of the coefficients $(F_{i,\kappa})$ are non-zero.
With a view to defining the Jacobian matrix associated with a polynomial mapping in free variables, we first need a notion of differentiation. To this end, given a multi-index $\kappa= (\kappa_1,\ldots,\kappa_k)$, define
\begin{align*}
\frac{ \partial}{ \partial X_j } X_{\kappa_1} \ldots X_{\kappa_k} := \sum_{ l = 1}^k \ind_{ \kappa_l = j} X_{i_1} \ldots X_{i_{l-1}} X_{i_{l+1}} \ldots X_{i_k}.
\end{align*}
For example,
\begin{align*}
\frac{ \partial}{ \partial X_2 } X_2 X_2  X_1 X_2 X_4   = 2 X_2 X_1 X_2 X_4  + X_2 X_2 X_1 X_4 .
\end{align*}
A similar definition for differentiating non-commutative monomials appears in Rota, Sagan and Stein \cite{RSS}, where they refer the operation as the \emph{Hausdorff derivative}. By linearity the operator $\frac{ \partial}{ \partial X_j}$ extends to the ring of formal power series in $N$ free indeterminates. 

Again, for the sake of concreteness, let the coefficient ring $\mathbb{K} = \mathbb{C}$, and suppose we have a power series $F \in \fR^N$ in $N$ free variables with coefficients in $\mathbb{C}$. We can then define its Jacobian matrix to be the $N \times N$ matrix $\left( \frac{ \partial F_i}{ \partial X_j }  \right)_{ 1 \leq i,j \leq N }$ with coefficients in the (non-commutative) ring $\fR$. Here is the point at which the non-commutative case diverges with the commutative case: the invertibility of this Jacobian matrix at a point $(X_1,\ldots,X_N)$ may not be expressed straightforwardly in terms of a determinant because the ``determinant'' of a matrix with non-commuting entries is ill-defined.

By Theorem \ref{thm:free inversion general}, we know that a polynomial mapping $F$ in $\fR^N$ is guaranteed to have a compositional inverse $G$ in $\fR^N$ if and only if its Jacobian matrix is invertible at the origin. However, it is is by no means clear to the authors under what conditions the inverse $G$ is itself a polynomial. This discussion leads us pose the following question.

\begin{que}
Let $F$ be a polynomial mapping in $\fR^N$ in $N$ non-commuting variables whose Jacobian matrix is invertible at the origin so that $F$ has a power series inverse $G$. When is the inverse free power series $G$ itself a polynomial?
\end{que}

One possible way forward in terms of formulating a statement analogous to the Jacobian conjecture in the free case would be in terms of \emph{quasi-determinants}, which serve as analogues to the classical determinant for matrices with non-commutative entries. In their present form, quasi-determinants were introduced by Gel'fand and Retakh in \cite{GR}. We refer the reader to Gel'fand, Gel'fand, Retakh and Wilson \cite{GGRW} for a survey. 

\section{A few applications} \label{sec:applications}

Before proving our main results in Sections \ref{sec:fdbproof} and \ref{sec:lagrangeproof}, in this section we discuss a few applications of Fa\`a di Bruno's formula and our inversion formula to various areas of mathematics, including the Stirling numbers, the Hermite polynomials, the cumulants of random variables, reciprocation of power series, enumeration of trees, and the genealogy of Galton-Watson trees. 

\subsection{Stirling numbers}
Let $B(k,j)$ denote the number of of set partitions the set $\{1,\ldots,k\}$ into $j$ blocks. (So that in particular, $B(k,j)$ is zero when $j > k$.) The numbers $B(k,j)$ are known as the Stirling numbers of the second kind. We may use the one-dimensional Fa\`a di Bruno to obtain a quick derivation of the joint generating function of the sequence $\left( B(k,j) \right)_{k, j \geq 1 }$. Indeed, consider the function $h(x) := \exp \left( a e^{x} - a \right)$, which we may regard as the composition $h = f \circ g$ of the functions
\begin{align*}
f(x) = e^{ax} \qquad \text{and} \qquad g(x) : = e^{ x } - 1.
\end{align*}
By using the Fa\`a di Bruno formula \label{eq:faa di bruno}, we immediately see that 
\begin{align} \label{bell}
\frac{ d^k }{ dx^k } \exp \left( a e^{x} - 1 \right) \Big|_{x= 0} &= \sum_{ \pi \in \mathcal{P}_k } a^{ \# \pi}.
\end{align}
In particular, by summing \eqref{bell} over $k$, we have
\begin{align*}
\exp \left( a e^{bx} - 1 \right) = \sum_{ k = 0}^\infty \frac{b^k}{k!}  \sum_{ \pi \in \mathcal{P}_k } a^{ \# \pi } = \sum_{ k,j \geq 0} B(k,j) a^j \frac{b^k}{k!},
\end{align*} 
which gives a joint generating function of the Stirling numbers of the second kind $\left( B(k,j) \right)_{k,j \geq 1}$.

\subsection{Hermite polynomials}
The Hermite polynomials $\left( H_n(x) \right)_{ n \geq 0 }$ are a sequence of classical orthogonal polynomials arising in numerous applications across mathematics. The Hermite polynomials satisfy the orthogonality property
\begin{align*}
\int_{ - \infty}^\infty H_i(s) H_j(s) \frac{e^{ - s^2/2} ds }{  \sqrt{2 \pi} } = \delta_{i,j} i!,
\end{align*}
and may be defined through the higher derivatives
\begin{align*}
H_k(x) := (-1)^n e^{ x^2/2 } \frac{ d^k }{ dx^k } e^{ - x^2/2}.
\end{align*}
We now use the one dimensional Fa\`a di Bruno formula to obtain a new representation for the Hermite polynomials in terms of partitions of $\{1,\ldots,k\}$ into sets of size at most $2$. Indeed, setting $f(x) = e^{x}$ and $g(x) = - x^2/2$ in the Fa\`a di Bruno formula we obtain
\begin{align} \label{eq:hermite}
H_k(x) = \sum_{ \pi \in \mathcal{P}_k } (-1)^{n + \# \pi} \prod_{ \Gamma \in \pi } \left( \ind_{ \# \Gamma = 1} x + \ind_{ \# \Gamma = 2} \right).
\end{align}
The formula \eqref{eq:hermite} states that the Hermite polynomials have a representation as a \emph{matching polynomial}; see for instance Godsil \cite{godsil}.

\subsection{Cumulants}
Let $Y = \left( Y_1,\ldots, Y_N \right)$ be an $\mathbb{R}^N$-valued random variable, and consider the moment and cumulant generating functions 
\begin{align*}
M(s_1,\ldots,s_N) := \mathbb{E} [ e^{s_1 Y_1 + \ldots + s_N Y_N } ] \qquad \text{and} \qquad K(s_1,\ldots,s_N) := \log M(s_1,\ldots,s_N).
\end{align*}
For simplicity, suppose that $Y$ has moments of all orders, so that $M$ and $K$ may be written as a power series
\begin{align*}
M(s) = 1 + \sum_{ \alpha \neq 0} \frac{ m_\alpha}{ \alpha!} s^\alpha \qquad  \text{and} \qquad K(s) = \sum_{ \alpha \neq 0 } \frac{ \kappa_\alpha}{\alpha!} s^\alpha.
\end{align*}
Of course, each coefficient $m_\alpha$ is equal to the moment $\mathbb{E} [ Y^\alpha]$ of $Y$. The coefficients $(\kappa_\alpha)$ of $K$ on the other hand are known as the \emph{cumulants} of $X$. Though the moments are more natural than the cumulants, there are many occasions in probability where it is more convenient to work with the cumulants. Take for instance the fact that the standard \text{one-dimensional} Gaussian distribution is characterised by the succinct property that it is the unique random variable with cumulants $\kappa_n := \ind_{ n =2}$. We now address the problem of computing the cumulants in terms of the moments, and vice versa.

By the special case $m=2$ of Theorem \ref{thm:main}, and the following Remark \ref{rem:main}, if $f:\mathbb{R} \to \mathbb{R}$ and $G: \mathbb{R}^N \to \mathbb{R}$, the derivatives of $(f \circ G):\mathbb{R}^N \to \mathbb{R}$ are given by 
\begin{align} \label{eq:special}
D^{\alpha}[ f \circ G ] (s) = \sum_{ \mathcal{T} \in \mathbb{F}_{1,\alpha}(2) }D^{ \mu(v_0) }[ f] \circ G(s)  \prod_{v \in V_1 } D^{ \mu(v) } [G](s).
\end{align}
Now using the simple facts  
\begin{align*}
\left( \frac{d}{dx} \right)^j \log(x) \Big|_{x = 1} = ( -1)^{j-1} (j-1)!  \qquad \text{and} \qquad \left( \frac{d}{dx}\right)^j \exp(x) \Big|_{x = 0}  = 1,
\end{align*}
in conjunction with respectively setting $(f,G) = (\log , M)$ and $(\exp, K )$ and $s = 0$ in \eqref{eq:special}, we obtain the following formulas allowing us to pass between the multivariate moments and cumulants:
\begin{align} \label{mcpass}
\kappa_\alpha :=  \sum_{ \pi \in \mathcal{P}_{[\alpha]} } (-1)^{ \# \pi - 1} \left( \# \pi - 1 \right)! \prod_{ \Gamma \in \pi } m_{ \# \Gamma}  \qquad \text{and} \qquad 
m_\alpha :=  \sum_{ \pi \in \mathcal{P}_{[\alpha]} } \prod_{ \Gamma \in \pi } \kappa_{ \# \Gamma} .
\end{align}
The derivation of \eqref{mcpass} using Fa\`a di Bruno's formula is a slick alternative to the direct combinatorial approach often used in the literature (see e.g. Exercise 4 of Section 1.1 in Mingo and Speicher \cite{MS}).

\subsection{Reciprocals of power series}

Suppose we have a formal power series 
\begin{align*}
f(X) = 1 + \sum_{ |\alpha| \neq 0} \frac{ f_\alpha}{ \alpha!} X^\alpha.
\end{align*}
It is straightforward to show using \eqref{eq:special} that $\frac{1}{f(X)} := 1 + \sum_{ \alpha \neq 0} \frac{ h_\alpha}{\alpha!} X^\alpha$ 
where for $\alpha \neq 0$,
\begin{align*}
h_\alpha = \sum_{ \pi \in \mathcal{P}_{[\alpha]} } (-1)^{ \# \pi } \# \pi! \prod_{ \Gamma \in \pi} f_{\# \Gamma}.
\end{align*}

\subsection{Enumeration of trees}

For positive integers $k \geq 1$, recall the set $\mathbb{S}_k$ of rooted proper trees with leaves in bijection with $[k]$ introduced in Section \ref{sec:introduction}. Theorem \ref{thm:inversion1} supplies us with a quick way to enumerate $\mathbb{S}_k$.

Indeed, in the context of Theorem \ref{thm:inversion1} consider the power series $h$ such that $\mathcal{E}_h(\mathcal{T})  =1$ for every tree $\mathcal{T}$. We see that the unique power series with no constant or linear term giving this property is $h(X) = e^X - 1 - X$. It is then an immediate consequence of Theorem \ref{thm:inversion1} that 
\begin{align*}
\# \mathbb{S}_k = \text{Coefficient of $\frac{X^k}{k!}$ in power series inverse of $2X + 1 - e^X$}.
\end{align*}
In other words, the exponential generating function of $\mathbb{S}_k$ is given by the functional inverse of $f(X) = 2X +1 - e^X$. This result appears in Stanley \cite{stanley2}. 

We may now treat the energy functional $\mathcal{E}_h(\mathcal{T})$ more generally as a partition function counting the number of vertices of certain degrees with trees of $\mathbb{S}_k$. For instance, suppose we want to count the size of the subset $\mathbb{S}_k^{\mathsf{even}}$ of $\mathbb{S}_k$ consisting only of trees in which vertices have an even number of children. Then we look at the power series 
\begin{align*}
h(X ) = \sum_{ k \geq 2} \frac{ \ind_{ \text{ $k$ even } } }{ k!} X^k  = \cosh(X) - 1,
\end{align*} 
As a result, we have 
\begin{align*}
\# \mathbb{S}_k^{\mathsf{even}} = \text{Coefficient of $\frac{X^k}{k!}$ in functional inverse of $2X + 1 - \cosh(X)$}.
\end{align*}

\subsection{The genealogical structure of Galton-Watson trees}

Fa\`a di Bruno's formula is required to understand the genealogical structure of Galton-Watson trees, which are stochastic processes modelling population growth which we now define. Before setting this up, we note that whenever $f,g$ are smooth functions with non-negative derivatives of all orders, and $D^n (f \circ g)(x) > 0$, the function $P:\mathcal{P}_n \to \mathbb{R}$ given by 
\begin{align} \label{fdbpi}
P(\pi) := \frac{  D^{\#\pi}[f] (g (x)  \prod_{ \Gamma \in \pi} D^{ \# \Gamma}[g](x)  }{ D^n[ f \circ g ](x) }
\end{align}
defines a probability measure on the set $\mathcal{P}_n$ of partitions of $\{1,\ldots,n\}$. It turns out that random partitions with probability laws of the form \eqref{fdbpi} occur naturally in the study of Galton-Watson trees.

Let $(p_i)_{i \geq 0}$ be a collection of non-negative reals satisfying $\sum_{ i \geq 0} p_i = 1$. The (continuous-time) Galton-Watson tree with offspring distribution $(p_i)_{i \geq 0}$ is the stochastic process defined as follows. We start with a single particle at time zero who lives for a random period of time of length $\tau$, where $\tau$ is standard exponential so that
\begin{align*}
\mathbb{P} \left( \tau > t \right) = e^{ - t}.
\end{align*}
Upon death this particle is replaced by a random number of children $L$, where
\begin{align*}
\mathbb{P} \left( L = i \right) = p_i.
\end{align*}
Each one of these children then independently repeats the behaviour of their parent: living for a standard exponential amount of time and then being replaced by a random number of children upon death with probabilities $(p_i)_{i \geq 0}$. We make the simple observation that for every pair of times $t < T$, that each individual living at some time $T$ is descended from a unique ancestor living in the time $t$ population. 

Galton-Watson trees are best studied through their generating functions. For $s \in [0,1]$ let $f(s) := \sum_{ i \geq 0} p_i s^i$, and suppose that $N_t$ is the number of particles in the process at time $t$. Then the process generating function $F_t(s) := \mathbb{E} \left[ s^{N_t} \right]$ satisfies the partial differential equation
\begin{align*}
\frac{ \partial F}{ \partial t }  = f ( F) - F, ~~~ F_0(s) = s.
\end{align*} 
(See e.g. Athreya and Ney \cite{AN}.) Moreover, $F_t$ is a semigroup, in that $F_{t_1} \circ F_{t_2} = F_{t_1 + t_2}$. 

Fix $T > 0$, and consider now conditioning on the event that $\{ N_T \geq k \}$ and picking $k$ distinct individuals $u_1,\ldots,u_k$ uniformly from the population at time $T$. We may create a random partition $\pi_t$ of $\{1,\ldots,k\}$ by declaring $i$ and $j$ to be in the same block of $\pi_t$ if individuals $u_i$ and $u_j$ are descended from the same ancestor in the time $t$-population. According to Theorem 3.1 of \cite{johnston}, the law of $\pi_t$ is given by the integral formula
\begin{align} \label{eq:johnston}
\mathbb{P} \left( \pi_t = \pi \right) = \int_0^1 \Lambda^k_T(s) \frac{ F_t^{ \# \pi} (F_{T-t}(s) ) \prod_{ \Gamma \in \pi} F_{T-t}^{ \# \Gamma}(s) }{ F_T^k(s) } ds, 
\end{align}
where $F_t^j(s) = \left( \frac{ \partial}{ \partial s} \right)^j F_t(s)$, and 
\begin{align*}
\Lambda^k_T(s)ds := \frac{ (1-s)^{k-1} F_T^k(s) }{(k-1)!\mathbb{P} \left( N_T \geq k \right) }  ds
\end{align*}
is a probability measure for $s \in [0,1]$. We remark that the fact that the right-hand side of \eqref{eq:johnston} constitutes a probability measure on $\mathcal{P}_n$ is a consequence of the fact that $\Lambda_T^k(s)$ is a probability measure on $[0,1]$, and that the internal quotients are themselves probability measures by virtue of setting $f = F_t, g = F_{T-t}$ in the representation \eqref{fdbpi} and using the semigroup property $F_t \circ F_{T-t} = F_T$. 

We refer the reader to other appearances of  Fa\`a di Bruno's formula on work in the genealogical structure of branching processes: in the setting of Galton-Watson trees by Vatutin and co-authors \cite{LV,VHJ}, as well as continuous-state branching processes \cite{JL}.

That completes the section on applications. In the next section we work towards proving our generalisation of Fa\`a di Bruno's formula in the commutative case, Theorem \ref{thm:main}.

\section{Proof of the multivariate Fa\`a di Bruno formula} \label{sec:fdbproof}

In this section we work towards proving our main result in the commutative case, Theorem \ref{thm:main}, which is a generalisation of Fa\`a di Bruno's formula for a chain composition of functions in $N$ commutative variables. In Section \ref{sec:rules} we begin by making some first observations. In Section \ref{sec:fdb2} we prove Theorem \ref{thm:main} in the special case $m=2$, and in the following Section \ref{sec:fdb3} the result is proved for general $m$.

First, a word on notation: we will use the convention that in any expression, square brackets proceed composition, which proceeds multiplication. For instance
\begin{align*}
D[f] \circ g ~ h \text{ refers to }  \left( \left( D[f]  \right) \circ g \right)~ h.
\end{align*}

Our proof is based on a doubly inductive argument, first we prove the special case $m=2$ via an induction argument on $\alpha$. Then we prove the general case by induction on $m$. In the next section we investigate how the differentiation and projection operators interact with multiplication and composition of functions.

\subsection{Differentiation and combining functions}\label{sec:rules}
Let $\mathcal{A}$ be the set of smooth scalar-valued functions $f: \mathbb{R}^{\mathbb{N}} \to \mathbb{R}$, and $\mathcal{A}^N$ be the set of smooth vector-valued functions $F:\mathbb{R}^N \to \mathbb{R}^N$. Finally, let $\pi_j : \mathcal{A}^N \to \mathcal{A}$ be the projection of a function $F = (F_1,\ldots,F_N)$ onto its $j^{\text{th}}$ component $F_j$. 

We have several ways of combining functions in $\mathcal{A}$ or $\mathcal{A}^N$ to form a new function in one of these sets. First of all, we have addition. Namely given any pair of functions $f$ and $g$ in $\mathcal{A}$ (resp. $\mathcal{A}^N$), we may define a new function $f + g$ in $\mathcal{A}$ (resp. $\mathcal{A}^N$) by setting $(f + g) (s) := f(s) + g(s)$. Secondly, we have multiplication. Given any pair of functions $f$ and $g$ both in $\mathcal{A}$, we may define a function $fg$ in $\mathcal{A}$ defined by setting $f g(s) := f(s) g(s)$. Finally, we have composition. Namely, whenever $f$ is an element of $\mathcal{A}$ (resp. $\mathcal{A}^N$) and $g$ is an element of $\mathcal{A}^N$, we may define a function $f \circ g$ in $\mathcal{A}$ (resp. $\mathcal{A}^N)$ by setting $f \circ g(x) := f( g( x))$.

We now have a look at how first order differentiation $D^{\mathbf{e}_i} := \frac{ \partial}{ \partial x_i}$ interacts with each of these ways of combining functions. First of all, we have linearity. Namely, for $f_1,\ldots,f_p$ in $\mathcal{A}$ we have 
\begin{align} \label{linearity2}
D^{\mathbf{e}_i} \left[ \sum_{r=1}^p f_r \right] = \sum_{r=1}^p  D^{\mathbf{e}_i}[f_r].
\end{align}
As for multiplication, we have the product rule. For any $f,g$ in $\mathcal{A}$, we have $D^{\mathbf{e}^i} [fg] = D^{\mathbf{e}^i} [f] g + f D^{\mathbf{e}^i}  [ g]$ and more generally, it follows by induction that for $f_1,\ldots,f_p$ in $\mathcal{A}$,
\begin{align} \label{product2}
D^{\mathbf{e}_i}  \left[ \prod_{r = 1}^p f_r \right] = \sum_{ r = 1}^p D^{\mathbf{e}_i} [ f_r  ] \prod_{ s \neq r } f_s.
\end{align}
Finally, differentiation interacts with composition according to the chain rule. Namely whenever $f \in \mathcal{A}$ and $G$ is an element of $\mathcal{A}^N$, the first order derivatives of their composition satisfy
\begin{align} \label{chainrule2}
D^{\mathbf{e}_i} [ f \circ G ] = \sum_{ j = 1}^N \left( D^{\mathbf{e}_j } [f ] \circ G  \right) D_j^{\mathbf{e}_i}[G].
\end{align}
We will see that these three rules \eqref{linearity2}, \eqref{product2} and \eqref{chainrule2} are the building blocks in a proof of Theorem \ref{thm:main}. In the next section we start by proving the case $m = 2$ of Theorem \ref{thm:main}.

\subsection{The $m=2$ case: derivatives of $F \circ G$} \label{sec:fdb2}

On our way to proving Theorem \ref{thm:main}, first we prove the special case $m=2$, which we state as a lemma.

Recall that $\mathbb{F}_{i,\alpha}(2)$ is the set of labelled trees whose root $v_0$ has type $i$, the leaves are in bijection with $[\alpha]$, and every leaf of the tree lies in the second generation. 
\begin{lemma} \label{thm:main2}
For smooth functions $F,G:\mathbb{R}^N \to \mathbb{R}^N$, we have
\begin{align} \label{eq:scriabin}
D^{\alpha}_i [F \circ G ] = \sum_{ \mathcal{T} \in \mathbb{F}_{i,\alpha} ( 2) } \mathcal{E}_{F,G}(\mathcal{T} ) ,
\end{align}
where
\begin{align*}
\mathcal{E}_{F,G}(\mathcal{T} ) :=\left(  D_i^{\mu(v_0) } [F] \circ G \right) \prod_{ v \in V_1} D_{\tau(v)}^{\mu(v)} [G].
\end{align*}
\end{lemma}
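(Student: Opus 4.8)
The plan is to prove \eqref{eq:scriabin} by induction on the degree $|\alpha|$ of the multi-index, using the three basic rules \eqref{linearity2}, \eqref{product2} and \eqref{chainrule2} from Section \ref{sec:rules}. For the base case $|\alpha| = 1$, say $\alpha = \mathbf{e}_r$, the chain rule \eqref{chainrule2} gives $D_i^{\mathbf{e}_r}[F \circ G] = \sum_{j=1}^N (D_i^{\mathbf{e}_j}[F] \circ G) D_j^{\mathbf{e}_r}[G]$, and each summand is exactly $\mathcal{E}_{F,G}(\mathcal{T})$ for the unique tree $\mathcal{T} \in \mathbb{F}_{i,\mathbf{e}_r}(2)$ with a single middle vertex of type $j$ carrying the single leaf labelled $(r,1)$; summing over $j$ runs over all of $\mathbb{F}_{i,\mathbf{e}_r}(2)$.

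For the inductive step, suppose \eqref{eq:scriabin} holds for $\alpha$ and consider $D_i^{\alpha + \mathbf{e}_r}[F \circ G] = D^{\mathbf{e}_r}\big[ D_i^{\alpha}[F\circ G] \big] = \sum_{\mathcal{T} \in \mathbb{F}_{i,\alpha}(2)} D^{\mathbf{e}_r}\big[ \mathcal{E}_{F,G}(\mathcal{T}) \big]$, using linearity \eqref{linearity2}. Each energy $\mathcal{E}_{F,G}(\mathcal{T}) = \big(D_i^{\mu(v_0)}[F]\circ G\big)\prod_{v \in V_1} D_{\tau(v)}^{\mu(v)}[G]$ is a product of $1 + \#V_1$ factors, so the product rule \eqref{product2} expresses $D^{\mathbf{e}_r}[\mathcal{E}_{F,G}(\mathcal{T})]$ as a sum of $1+\#V_1$ terms, one for each factor being differentiated. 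I would interpret each such term combinatorially as attaching a new leaf labelled $(r, \alpha_r + 1)$ to one of the existing vertices in generations $0$ or $1$ of $\mathcal{T}$, or — in the case where the root factor $D_i^{\mu(v_0)}[F]\circ G$ is differentiated — as creating a new middle vertex. Concretely, differentiating the root factor: $D^{\mathbf{e}_r}\big[ D_i^{\mu(v_0)}[F] \circ G \big] = \sum_{j=1}^N \big( D_i^{\mu(v_0)+\mathbf{e}_j}[F] \circ G\big) D_j^{\mathbf{e}_r}[G]$ by \eqref{chainrule2}, which corresponds to adding a new child of the root of type $j$, itself carrying a single new leaf $(r,\alpha_r+1)$; differentiating a middle factor $D_{\tau(v)}^{\mu(v)}[G]$ simply increments it to $D_{\tau(v)}^{\mu(v)+\mathbf{e}_r}[G]$, which corresponds to attaching the new leaf $(r,\alpha_r+1)$ directly as a child of $v$.

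The crux of the argument — and the step I expect to be the main obstacle — is verifying that this map from (tree $\mathcal{T} \in \mathbb{F}_{i,\alpha}(2)$, choice of where to attach the new leaf $(r, \alpha_r+1)$) to $\mathbb{F}_{i,\alpha+\mathbf{e}_r}(2)$ is a \emph{bijection}, so that the resulting sum is exactly $\sum_{\mathcal{T}' \in \mathbb{F}_{i,\alpha+\mathbf{e}_r}(2)} \mathcal{E}_{F,G}(\mathcal{T}')$. The inverse map is clear: given $\mathcal{T}' \in \mathbb{F}_{i,\alpha+\mathbf{e}_r}(2)$, locate the leaf labelled $(r,\alpha_r+1)$; if its parent in $V_1$ has other children, delete that leaf to recover a tree in $\mathbb{F}_{i,\alpha}(2)$ (this is the "differentiate a middle factor" case); if its parent in $V_1$ has only this one child, delete both the leaf and its parent (this is the "differentiate the root factor" case). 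One must check these operations are well-defined on isomorphism classes, that leaf labels are correctly relabelled so the result genuinely lands in $\mathbb{F}_{i,\alpha}(2)$ with leaf set $[\alpha]$, and that the two deletion cases partition $\mathbb{F}_{i,\alpha+\mathbf{e}_r}(2)$ according to whether the distinguished leaf is an only child. Care is also needed because in $\mathbb{F}_{i,\alpha}(2)$ every leaf must sit in generation $2$, so when we add a brand-new middle vertex we must simultaneously give it its leaf — there is no "empty" middle vertex — which is why the root-factor differentiation via the chain rule produces precisely a middle vertex together with exactly one leaf. Once the bijection is established, matching of the energy weights is immediate from the identities above, and the induction closes.
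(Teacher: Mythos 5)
Your proposal is correct and follows essentially the same route as the paper's proof: induction on the multi-index, with the chain rule handling the base case, linearity plus the product and chain rules expanding $D^{\mathbf{e}_r}[\mathcal{E}_{F,G}(\mathcal{T})]$ into the two kinds of terms (new middle vertex with the new leaf, versus new leaf attached to an existing vertex of $V_1$), and the concluding bijection onto $\mathbb{F}_{i,\alpha+\mathbf{e}_r}(2)$. Your explicit description of the inverse map (delete the distinguished leaf, and also its parent when it is an only child) is exactly the uniqueness observation the paper states without elaboration.
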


\begin{proof}
We proceed by induction on the multi-index $\alpha$. 

The special case where $\alpha = \mathbf{e}_j$ follows immediately from the chain rule. Indeed, a tree with two generations and one leaf is simply a line with three vertices. The root has type $i$ and the sole leaf has type $j$, leaving $N$ choices of labelling for the internal vertex.

We now fix $\beta \in \mathbb{Z}_{ \geq 0}^N$, and show that if \eqref{eq:scriabin} holds for $\alpha = \beta$, then it holds for $\alpha = \beta + \mathbf{e}_j$. Indeed, using the inductive hypothesis in the second equality below, and the linearity of differentiation in the third, we have 
\begin{align} \label{symphony1}
D^{ \beta + \mathbf{e}_j }_i [ F \circ G ] &= D^{ \mathbf{e_j} } \left[ D^{ \beta}_i [F \circ G ] \right] \nonumber \\
&= D^{ \mathbf{e_j} } \left[     \sum_{ \mathcal{T} \in \mathbb{F}_{i,\alpha} ( 2) } \mathcal{E}_{F,G}(\mathcal{T} )  \right] \nonumber \\
&=  \sum_{ \mathcal{T} \in \mathbb{F}^{[\beta]}(2) }   D^{ \mathbf{e_j} } \left[         \mathcal{E}_{F,G}(\mathcal{T} )  \right].
\end{align}
We now expand the summands using the product rule and the chain rule \eqref{chainrule2}. Indeed, using the product rule to obtain the second equality below, and the chain rule to obtain the third, we have 
\begin{align} \label{eq:motivator}
 D^{ \mathbf{e_j} } \left[         \mathcal{E}_{F,G}(\mathcal{T} )  \right]  &:=
 D^{ \mathbf{e_j} } \left[     \left(  D_i^{\mu(v_0) } [F] \circ G \right) \prod_{ v \in V_1} D_{\tau(v)}^{\mu(v)} [G]  \right] \nonumber  \\ 
&=  D^{ \mathbf{e_j} } \left[   \left(  D_i^{\mu(v_0) } [F] \circ G \right)   \right] \prod_{v \in V_1} D_{\tau(v)}^{\mu(v)} [G] \nonumber \\  
&+    \sum_{v \in V_1}   \left(  D_i^{\mu(v_0) } [F] \circ G \right)   \prod_{w \in V_1} G_{\tau(v),\mu(v) + \ind_{v=w} \mathbf{e}_j } .
\end{align} 
Now note that by the chain rule, $D^{ \mathbf{e_j} } \left[   \left(  D_i^{\mu(v_0) } [F] \circ G \right)   \right]  = \sum_{ k = 1}^N \left(  D_i^{\mu(v_0) + \mathbf{e}_k } [F] \circ G \right) D_k^{\mathbf{e}_j} [G]$, so that 
\begin{align}  \label{symphony2}
 D^{ \mathbf{e_j} } \left[     \mathcal{E}_{F,G}(\mathcal{T} )  \right]  = \sum_{ k = 1}^N A_k(\mathcal{T}) + \sum_{ v \in V_1} B_v(\mathcal{T}),
\end{align}
where 
\begin{align*}
A_k(\mathcal{T}) := \left(  D_i^{\mu(v_0) + \mathbf{e}_k } [F] \circ G \right) D_k^{\mathbf{e}_j} [G] \prod_{v \in V_1} D_{\tau(v)}^{\mu(v)} [G]   
\end{align*}
and 
\begin{align*}
B_v(\mathcal{T})  :=  \left(  D_i^{\mu(v_0) } [F] \circ G \right)   \prod_{w \in V_1} G_{\tau(v),\mu(v) + \ind_{v=w} \mathbf{e}_j } .
\end{align*}
First, we observe that for each $k \in [N]$, $\mathcal{A}_k (\mathcal{T}) = \mathcal{E}_{F,G} \left( \mathcal{T}_k \right) $, where $\mathcal{T}_k$ is the tree in $\mathbb{F}_{i, \alpha + \mathbf{e}_j}(2)$ obtained from $\mathcal{T}$ by adding a vertex of type $k$ to $V_1$, and letting this vertex have the sole leaf $(j,\beta_j + 1)$ as a child. (Note that $\{ (j,\beta_j +1 ) \} = [ \beta + \mathbf{e}_j ] - [ \beta]$.)

Next, we note that $B_v (\mathcal{T}) = \mathcal{E}_{F,G} \left( \mathcal{T}_v \right)$ for each $v \in V_1$, where $\mathcal{T}_v$ is the tree in $\mathbb{F}_{i, \alpha + \mathbf{e}_j}(2)$ obtained from $\mathcal{T}$ by making the new leaf $(j,\beta_j+1)$ a child of $v$. 

We now note that every tree $\mathcal{T}'$ in $\mathbb{F}_{i,\beta+\mathbf{e}_j}(2)$ is obtained in this way by exactly one tree in $\mathbb{F}_{i,\beta}(2)$, that is, for each $\mathcal{T}' \in \mathbb{F}_{i,\beta + \mathbf{e}_j}(2)$ there is a unique $\mathcal{T}$ in $\mathbb{F}_{i,\beta}(2)$ such that either $\mathcal{T}' = \mathcal{T}_k$ or $\mathcal{T}' = \mathcal{T}_v$. 

Using \eqref{symphony1}, \eqref{symphony2} and this observation in the second equality below we obtain
\begin{align*}
D^{ \beta + \mathbf{e}_j }_i [ F \circ G ] &= \sum_{ \mathcal{T} \in \mathbb{F}_{i,\beta}(2) }  \left\{   \sum_{ k = 1}^N A_k(\mathcal{T}) + \sum_{ v \in V_1} B_v(\mathcal{T}) \right\}\\
&= \sum_{ \mathcal{T}' \in \mathbb{F}_{i,\beta + \mathbf{e}_j } (2) } \mathcal{E}_{F,G}(\mathcal{T}'),
\end{align*}
which proves the lemma.
 
\end{proof}

\subsection{The general case} \label{sec:fdb3}

\begin{proof}[Proof of Theorem \ref{thm:main}]
We now prove Theorem \ref{thm:main} by induction on $m$.

Now suppose the result holds for any $m$-fold composition of functions. We now show it holds for $(m+1)$-fold compositions. Indeed, let $F^{(1)} \circ \ldots \circ F^{(m+1)}$ be the composition of $m+1$ smooth functions from $\mathbb{R}^N \to \mathbb{R}^N$. By the special case $m =2$, with $F = F^{(1)}$ and $G = F^{(2)} \circ \ldots \circ F^{(m+1)}$, we have
\begin{align} \label{eq:orange}
D_i^\alpha [ F^{(1)} \circ \ldots \circ F^{(m+1) } ] (s) = \sum_{ \mathcal{T} \in \mathbb{F}_{i,\alpha} ( 2 ) } \left( D_i^{ \mu ( v_0 ) } [F] \circ G \right) \prod_{ v \in V_1} D_{ \tau(v)}^{\mu(v) } [G] .
\end{align}
We may now use the inductive hypothesis to compute $D_{\tau(v)}^{\mu(v)}[G]$. For each $v \in V_1$, by the inductive hypothesis we have 
\begin{align*}
D_{ \tau(v)}^{\mu(v) } [G] = \sum_{ \mathcal{T} \in \mathbb{F}_{\tau(v),\mu(v)} (2) } \mathcal{E}_{ G^*} \left( \mathcal{T} \right),
\end{align*}
where $G^*$ is the $m$-tuple $(F^{(2)},\ldots,F^{(m+1)}$. In particular, expanding \eqref{eq:orange} we have
\begin{align*}
D_i^\alpha [ F^{(1)} \circ \ldots \circ F^{(m+1) } ] (s) &= \sum_{ \mathcal{T} \in \mathbb{F}_{i,\alpha} ( 2 ) } \left( D_i^{ \mu ( v_0 ) } [F] \circ G \right) \prod_{ v \in V_1} \sum_{ \mathcal{T}_v \in \mathbb{F}_{ \mu(v),\tau(v) } (m) } \mathcal{E}_{ G^*} (\mathcal{T}_ v)\\ 
&= \sum_{ \mathcal{T} \in \mathbb{F}_{i,\alpha} ( 2 ) } \sum_{ \mathcal{T}_v \in \mathbb{F}_{ \mu(v), \tau(v) } (m)  : v \in V_1} \left( D_i^{ \mu ( v_0 ) } [F] \circ G \right) \prod_{ v \in V_1}  \mathcal{E}_{ G^*} (\mathcal{T}_ v).
\end{align*}
We now note that each term in the sum --- that is, each combination of $\mathcal{T} \in \mathbb{F}_{i,\alpha}(2)$ and a set of $m$-trees $(\mathcal{T}_v : v \in V_1)$ --- gives rise to a tree $\mathcal{T}'$ in $\mathbb{F}_{i,\alpha}(m+1)$ by `glueing' the tree $\mathcal{T}_v$ to the vertex $v$ in $V_1$, and that
\begin{align*}
\mathcal{E}_{ F^*} \left( \mathcal{T} \right) = \left( D_i^{ \mu ( v_0 ) } [F] \circ G \right) \prod_{ v \in V_1}  \mathcal{E}_{ G^*} (\mathcal{T}_ v) .
\end{align*}
We note that each $\mathcal{T}'$ in $\mathbb{F}_{i,\alpha}(m+1)$ is obtained uniquely in this way. It follows that  
\begin{align*}
D_i^\alpha [ F^{(1)} \circ \ldots \circ F^{(m+1) } ] (s) = \sum_{ \mathcal{T}' \in \mathbb{F}_{i,\alpha}(m+1) } \mathcal{E}_{F^*} ( \mathcal{T} ' ),
\end{align*}
proving the result.

\end{proof}

\section{Proof of the power series inversion formulas} \label{sec:lagrangeproof}

This section is dedicated towards proving the power series inversion formulas given in Theorem \ref{thm:inversion} and Theorem \ref{thm:general inversion}, as well as to proving Lemma \ref{lem:fern}, which was stated in our discussion of the Jacobian conjecture in Section \ref{sec:JC}.

We begin in the next section by making some first remarks about the ring of power series in $N$ commuting variables.
 
\subsection{The ring of formal power series}
Let $\mathbb{K}$ be a commutative ring, and consider the ring
\begin{align*}
R := \mathbb{K}[[X_1,\ldots,X_N]]
\end{align*}
of formal power series in $N$ commutative indeterminates with coefficients in $\mathbb{K}$. Namely, each element $f$ of $R$ is simply a collection $\left(f_{\alpha} :  \alpha \in \mathbb{Z}_{ \geq 0}^{N} \right)$ of elements of the underlying ring $\mathbb{K}$, though we think of $f$ as the formal expression
\begin{align*}
f(X) := \sum_{ \alpha \in \mathbb{Z}_{ \geq 0}^{N} } \frac{ f_{\alpha}}{\alpha!} X^\alpha.
\end{align*}
There are naturally defined notions of addition and multiplication of elements of $R$. Namely, $(f + g)_{\alpha} := f_\alpha + g_\alpha$, and by the Leibniz rule \eqref{eq:leib2}, the $\alpha^{\text{th}}$ coefficient of the product $fg$ is given by 
\begin{align*}
(fg)_\alpha =  \sum_{ S \sqcup T = [\alpha] } f_{ \# S} g_{ \# T}.
\end{align*}

Now define the set $R^N$ of objects of the form $F = (F_i)_{ i = 1,\ldots,N}$, where each $F_i$ is an element of $R$. In other words, $R^N$ is simply the set of tuples $F = \left( F_{i,\alpha} : i \in [N], \alpha \in \mathbb{Z}_{ \geq 0}^N \right)$. 

For each pair of elements $F,G$ in $R^N$, we would like to define the composition $F \circ G$ in $R^N$. To this end recall the special case $m=2$ of the Fa\`a di Bruno formula  Lemma \ref{thm:main2},  which states that 
\begin{align} \label{eq:scriabin2}
D^{\alpha}_i [F \circ G ] = \sum_{ \mathcal{T} \in \mathbb{F}_{i,\alpha} ( 2) } \mathcal{E}_{F,G}(\mathcal{T} )  \qquad \text{where    }\qquad \mathcal{E}_{F,G}(\mathcal{T} ) :=\left(  D_i^{\mu(v_0) } [F] \circ G \right) \prod_{ v \in V_1} D_{\tau(v)}^{\mu(v)} [G].
\end{align}
With \eqref{eq:scriabin2} in mind, we define the composition $F \circ G$ to be the element of $R^N$ whose coefficients are given by 
\begin{align} \label{eq:scriabin3}
(F \circ G)_{i, \alpha}  = \sum_{ \mathcal{T} \in \mathbb{F}_{i,\alpha} ( 2) } F_{i, \mu(v_0)} \prod_{ v \in V_1} D_{ \tau(v), \mu(v)}. 
\end{align}
Composition is clearly associative in that $(F \circ G) \circ H = F \circ (G \circ H)$, with the coefficients of multiple compositions afforded by the power series variant \eqref{eq:fdbpower} of Theorem \ref{thm:main}.

Define the element $I$ of $R^N$ by 
\begin{align*}
I_{i,\alpha} = 
\begin{cases}
1 \qquad &\text{if $\alpha = \mathbf{e}_i$},\\
0 \qquad &\text{otherwise}.
\end{cases}
\end{align*}
We note that $I$ is both a left and right identity for composition of functions in $R^N$, in that for every $F$ in $R^N$ we have $I \circ F = F \circ I = F$. 

We say an element $F$ of $R^N$ is invertible if there exists a $G$ in $R^N$ such that $F \circ G = G \circ F = I$. In Section \ref{sec:inversion proof} we prove Theorem \ref{thm:inversion}, which states that all elements $F$ of $R^N$ whose linear term is the identity matrix are invertible, and gives an explicit expression for the coefficients of the inverse. In the following section, Section \ref{sec:general inversion proof}, we prove Theorem \ref{thm:general inversion}, which states that an element $F$ of $R^N$ is invertible if and only if its linear term is invertible, and again gives an expression for the inverse.

\subsection{Inverses of power series with identity linear terms} \label{sec:inversion proof}

In this section we now give two proofs of Theorem \ref{thm:inversion} concerning the inverse of power series whose linear term is the identity. The first proof is short, and is simply a matter of verifying that the formula works. The second proof provides more insight, giving an inductive construction of the trees that helps motivate the formula.
\begin{proof}[Proof 1 of Theorem \ref{thm:inversion}. A verification proof]
Let $G$ be the power series whose coefficients are given by \eqref{eq:GDEF}. We now show directly that $F \circ G = I$. Indeed, separating \eqref{eq:scriabin3} into trees $\mathcal{T} \in \mathbb{F}_{i,\alpha}(2)$ in which the root has one child or more than one child, and using the fact that $F_{i,\mathbf{e}_j } = \delta_{i,j}$, we have 
\begin{align*}
(F \circ G)_{i,\alpha} = G_{i,\alpha } - \sum_{ \mathcal{T} \in \mathbb{F}_{i,\alpha} (2) : |\mu(v_0) | \geq 2 }H_{i,\mu(v_0) } \prod_{ v \in V_1 } G_{ \tau(v) , \mu (v) }.
\end{align*} 
We now claim that
\begin{align*}
\sum_{ \mathcal{T} \in \mathbb{F}_{i,\alpha} : |\mu(v_0) | \geq 2 }H_{i,\mu(v_0) } \prod_{ v \in V_1 } G_{ \tau(v) , \mu (v) } = G_{i,\alpha} := \sum_{ \mathcal{U} \in \mathbb{S}_{i,\alpha}} \mathcal{E}_H(\mathcal{U}).
\end{align*}
Indeed, let for each $\mathcal{T} \in \mathbb{F}_{i,\alpha}(2)$, and each $v$ in the first generation $V_1(\mathcal{T})$ of this tree, expand the term $G_{\tau(v),\mu(v)}$ in terms of a sum over trees $(\mathcal{T}_v)$. Each possible combination of of $\mathcal{T} \in \mathbb{F}_{i,\alpha}(2)$ and $\left(\mathcal{T}_v : v \in V_1(\mathcal{T})\right)$ creates a tree $\mathcal{U}$ in $\mathbb{S}_{i,\alpha}$ in such a way that $\mathcal{E}_H(\mathcal{U}) = H_{i,\mu(v_0)} \prod_{v \in V_1(\mathcal{T})} \mathcal{E}_H(\mathcal{T}_v )$. Moreover, every tree $\mathcal{U}$ in $\mathbb{S}_{i,\alpha}$ is created in this way by exactly one possible combination. 
\end{proof}

\begin{proof}[Proof 2 of Theorem \ref{thm:inversion}. A constructive proof]
The proof follows by an induction argument on the degree $|\alpha|$ of $\alpha$. By definition, if $F$ and $G$ are formal inverses of one another, we must have 
\begin{align} \label{cases 1}
(F \circ G)_{i ,\alpha } = 
\begin{cases}
1 \qquad &\text{if $\alpha = \mathbf{e}_i$},\\
0 \qquad &\text{otherwise}.
\end{cases}
\end{align}
On the other hand, by \eqref{eq:scriabin3} and the definition $(F \circ G)_{i , \alpha} := (F_i \circ G)_\alpha$, we have
\begin{align} \label{m2}
(F \circ G)_{i, \alpha}  = \sum_{ \mathcal{T} \in \mathbb{F}_{i,\alpha} ( 2) } F_{i, \mu(v_0)} \prod_{ v \in V_1} D_{ \tau(v), \mu(v)}. 
\end{align}
In particular, combining \eqref{cases 1} and \eqref{m2}, we obtain the following system of equations in $i$ and $\alpha$:
\begin{align} \label{m3}\sum_{ \mathcal{T} \in \mathbb{F}_{i,\alpha} ( 2) } F_{i, \mu(v_0)} \prod_{ v \in V_1} D_{ \tau(v), \mu(v)} =   
\begin{cases}
1 \qquad &\text{if $\alpha = \mathbf{e}_i$},\\
0 \qquad &\text{otherwise}.
\end{cases}
\end{align}
First we show that the linear term $J(G)(0)$ of the inverse $G$ is the identity map. This follows from noting that for $|\alpha| = 1$, the system of equations \eqref{m3} reads as saying 
\begin{align*}
\sum_{ k = 1}^N F_{i, \mathbf{e}_k } G_{ k, \mathbf{e}_j } = \delta_{i,j},
\end{align*}
or in other words, the matrix composition of $J(F)(0)$ and $J(G)(0)$ is the identity matrix. Since $J(F)(0)$ is the identity, so is $J(G)(0)$. 

Now when $|\alpha| = 2$, since every tree in $\mathbb{F}_{i,\alpha}(2)$ has two leaves, the generation $V_1$ either contains one or contains two vertices. In particular, setting $|\alpha| = 2$ in \eqref{m3} and using the fact that $F_{i,\mathbf{e}_j} = G_{i,\mathbf{e}_j } = \delta_{i,j}$, we obtain
\begin{align*}
F_{i,\alpha} + G_{i,\alpha} = 0 \qquad \text{for $|\alpha| = 2$},
\end{align*} 
which establishes \eqref{eq:GDEF} for $|\alpha| = 2$, since there is exactly one tree in $\mathbb{S}_{i,\alpha}$. The case $|\alpha| = 2$ will form a base case in our inductive proof.

Suppose now that $\beta \in \mathbb{Z}_{ \geq 0}^d$, and that the formula \eqref{eq:GDEF} holds for every $j$ in $[N]$ and every $\alpha \in \mathbb{Z}_{ \geq 0}^d$ satisfying $|\alpha| < |\beta|$. We now show it holds for $\beta$. Indeed, again using \eqref{m3}, and separating $F_i^\alpha(2)$ into those trees in which the root has one child or more children, and then using the fact that $F_{i,\mathbf{e}_j} = \delta_{i,j}$, we obtain
\begin{align*}
0 = G_{i,\alpha} + \sum_{ \mathcal{T} \in \mathbb{F}_{i,\alpha} (2) : |\mu(v_0)| \geq 2} F_{i, \mu(v_0)} \prod_{ v \in V_1} G_{ \tau(v), \mu(v)}.
\end{align*} 
Equivalently, by using the definition of $H$ we have 
\begin{align*}
 G_{i,\alpha} =  \sum_{ \mathcal{T} \in \mathbb{F}_{i,\alpha} (2) : |\mu(v_0)| \geq 2} H_{i, \mu(v_0)} \prod_{ v \in V_1} G_{ \tau(v), \mu(v)}.
\end{align*} 
Since $G_{i,\mathbf{e}_j} = \delta_{i,j}$, every vertex in $V_1$ either has more than one child, or they have a single child of the same type as themselves. With this in mind, define $\widetilde{\mathbb{F}}_{i,\alpha}(2)$ to be the subset of trees in $\mathbb{F}_{i,\alpha}(2)$ such that the root has two or more children, and such that every $v \in V_1$ with only one child, we have $\mu(v) = \mathbf{e}_{ \tau(v)}$. 
\begin{align} \label{eq:monkey}
 G_{i,\alpha} =  \sum_{ \mathcal{T} \in \widetilde{\mathbb{F}}_{i,\alpha} (2) } H_{i, \mu(v_0)} \prod_{ v \in V_1 : |\mu(v) | \geq 2 } G_{ \tau(v), \mu(v)}.
\end{align} 

Now take a tree $\mathcal{T}$ in $\mathbb{F}_{i,\alpha}(2)$. Since $|\mu(v_0)| \geq 2$, we have $|\mu(v)| < |\alpha|$ for every $v \in V_1$. In particular we may use the inductive hypothesis to expand  each $ G_{ \tau(v), \mu(v)} $ in terms of a tree. In particular, by using the inductive hypothesis in \eqref{eq:monkey} we have 
\begin{align*}
 G_{i,\alpha} &=  \sum_{ \mathcal{T} \in \widetilde{\mathbb{F}}_{i,\alpha} (2) } H_{i, \mu(v_0)} \prod_{ v \in V_1 : |\mu(v) | \geq 2 } \sum_{ \mathcal{T}_v  \in \mathbb{S}_{\tau(v), \mu(v)} }  \mathcal{E}_{H} (  \mathcal{T}_v) \\ 
&=  \sum_{ \mathcal{T} \in \widetilde{\mathbb{F}}_{i,\alpha} (2) } \sum_{ \left\{ \left( \mathcal{T}_v \right)_{v \in V_1} : \mathcal{T}_v  \in \mathbb{S}_{\tau(v), \mu(v)} : v \in V_1 \right\} }  H_{i, \mu(v_0)} \prod_{ v \in V_1 : |\mu(v) | \geq 2 }  \mathcal{E}_{H} (  \mathcal{T}_v).
\end{align*} 
Let $\mathcal{T} \in \widetilde{\mathbb{F}}_{i,\alpha}(2)$, and for each $v \in V_1$ let $\mathcal{T}_v  \in \mathbb{S}_{\tau(v), \mu(v)}$. Then it is easily seen that 
\begin{align*}
 H_{i, \mu(v_0)} \prod_{ v \in V_1 : |\mu(v) | \geq 2 }  \mathcal{E}_{H} (  \mathcal{T}_v) = \mathcal{E} \left( \mathcal{T}' \right),
\end{align*} 
where $\mathcal{T}'$ is the element of $\mathbb{S}_{i,\alpha}$ such that the subtree of $\mathcal{T}'$ of descendants of each vertex $v \in V_1$ is given by $\mathcal{T}_v$. Noting that every tree $\mathcal{T}'$ in $\mathbb{S}_{i,\alpha}$ is obtained uniquely in this way, we see that
\begin{align*}
G_{i,\alpha} = \sum_{ \mathcal{T}' \in \mathbb{S}_{i,\alpha} } \mathcal{E}_H \left( \mathcal{T}' \right),
\end{align*}
which proves the result.

\end{proof}

\subsection{Inverses of power series with non-identical linear terms}
\label{sec:general inversion proof}

In this section we prove the more general result, Theorem \ref{thm:general inversion}, concerning inversion of power series with non-identical linear terms. We will see that this result may be obtained as a corollary of Theorem \ref{thm:inversion}, where the linear term is assumed to be the identity matrix.

\begin{proof}[Proof of Theorem \ref{thm:general inversion}]

 Namely, suppose we have a general element $F = \left( F_{i,\alpha} : i \in [N], \alpha \in \mathbb{Z}_{ \geq 0}^N \right) $ of $R^N$ whose $i^{\text{th}}$ component is given by 
\begin{align*}
F_i(X) = \sum_{ j = 1}^N P_{ i, \mathbf{e}_j } X_j + \sum_{ |\alpha| \geq 2} \frac{ F_{i,\alpha}}{ \alpha!} X^\alpha,
\end{align*}
and consider the formal inverse $G$ of $F$. We now note that if the matrix $\left( P_{i, \mathbf{e}_j } \right)$ is not invertible, then neither is $F$, since the linear term of a functional inverse $G$ is necessarily a matrix inverse of $F$. 

Now suppose that $\left( Q_{i, \mathbf{e}_j } \right)$ is a matrix inverse of $\left( P_{i, \mathbf{e}_j } \right)$, so that $Q$ may be thought of as an element of $R^N$ with $i^{\text{th}}$ component $Q_i(s) = \sum_{ j = 1}^N Q_{i, \mathbf{e}_j } X_j$. Let $F = P - H$, and define the power series
\begin{align*}
\widetilde{F} := F \circ Q.
\end{align*}  
It is easily verified that the $i^{\text{th}}$ component of $\widetilde{F}$ has the form 
\begin{align*}
\widetilde{F}_i(s) = X_i - \widetilde{H}_i(s),
\end{align*}
where $\widetilde{H} = H \circ Q$ contains only terms of degree two or higher. By Theorem \ref{thm:inversion} we have an expression for the coefficients of the inverse $\widetilde{G}$ of $\widetilde{F}$. Indeed, 
\begin{align*}
\widetilde{G}_i(X) = X_i + \sum_{ |\alpha| \geq 2} \frac{ \widetilde{G}_{i,\alpha}}{\alpha!  } X^\alpha,
\end{align*}
where
\begin{align} \label{Gguy}
\widetilde{G}_{i,\alpha} = \sum_{ \mathcal{T} \in \mathbb{S}_{i,\alpha} } \mathcal{E}_{ \widetilde{H} }( \mathcal{T} ).
\end{align}
Now it is easily verified that the element $G$ of $R^N$ given by
\begin{align*}
G = Q \circ \widetilde{G}
\end{align*} 
is the inverse of $F$, and we may use Fa\`a di Bruno's formula to compute the coefficients of $G$. Indeed, using \eqref{eq:scriabin2} and the fact that $Q_{i,\alpha}$ is zero whenever $\alpha$ has degree two or higher to obtain the first equality below, and \eqref{Gguy} to obtain the second, we have
\begin{align} \label{Sumguy}
G_{i,\alpha} &= \sum_{ j \in [N] } Q_{i, \mathbf{e}_j }\widetilde{G}_{j, \alpha} \nonumber \\
&= \sum_{ j \in [N] } Q_{i, \mathbf{e}_j } \sum_{ \mathcal{T} \in \mathbb{S}_{j,\alpha}} \mathcal{E}_{ \widetilde{H} } ( \mathcal{T} ) \nonumber \\
&= \sum_{ j \in [N] , \mathcal{T} \in \mathbb{S}_{j,\alpha}} Q_{i, \mathbf{e}_j }  \mathcal{E}_{ \widetilde{H} } ( \mathcal{T} ).
\end{align}

Now recall that $\mathbb{A}_{i,\alpha}$ is the set of alternating trees: those vertices with an even distance from the root have exactly one child, and vertices lying an odd distance from the root have two or more children.

Note that there is a natural projection $\mathsf{P} : \mathbb{A}_{i,\alpha} \to \cup_{ j \in [N] } \mathbb{S}_{j,\alpha}$ defined as follows. Let $\mathcal{A} $ be an alternating tree in $\mathbb{A}_{i,\alpha}$, and define an element $\mathcal{T}' = (V',E',\tau',\phi') = \mathsf{P} (\mathcal{A})$ of $\mathbb{S}_{i,\alpha}$ as follows:
\begin{itemize}
\item The vertex set of $\mathcal{T}'$ is given by 
\begin{align*}
V' := \bigcup_{k \geq 0} V_{2k+1}(\mathcal{A}),
\end{align*}
\item The edge set of $\mathcal{T}'$ is given by 
\begin{align*}
E' := \{ (v_0, v_1) \} \cup \{ (v,w) \in V' : \text{$v$ is a grandchild of $w$ in $\mathcal{A}$ or vice versa} \},
\end{align*}
where $(v_0,v_1)$ is the edge between the root and the unique vertex in $V_1$.
\item Finally, we let $\tau'$ and $\phi'$ be the restrictions of $\tau$ and $\phi$ to $V'$. (We note that every leaf of the alternating tree $\mathcal{T}$ is in an odd generation, and hence also an element of $V'$.) 
\end{itemize}

Given $\mathcal{T}^*$ in $\mathbb{S}_{i,\alpha}$, define $\mathsf{P}^{-1} (\mathcal{T}^* )$ to be the subset of $\mathbb{A}_{i,\alpha}$ consisting of alternating trees $\mathcal{A}$ such that $\mathsf{P} (\mathcal{A} ) = \mathcal{T}^*$. The result now follows from the simple observation that  
\begin{align} \label{eq:rachmaninov}
 Q_{i, \mathbf{e}_j }  \mathcal{E}_{ H \circ Q } ( \mathcal{T} ) = \sum_{ \mathcal{A} \in \mathsf{P}^{-1}(\mathcal{T} ) } \mathcal{E}^{\mathsf{alt}}_{H,Q} (\mathcal{A}).
\end{align}
Indeed, plugging \eqref{eq:rachmaninov} into \eqref{Sumguy} we obtain
\begin{align*}
G_{i, \alpha} = \sum_{ j \in [N] , \mathcal{T} \in \mathbb{S}_{j,\alpha}}   \sum_{ \mathcal{A} \in \mathsf{P}^{-1}(\mathcal{T} ) } \mathcal{E}^{\mathsf{alt}}_{H,Q} (\mathcal{A}).
\end{align*}
The result follows by noting that we may write $\mathbb{A}_{i,\alpha}$ as a disjoint union
\begin{align*}
\mathbb{A}_{i,\alpha} = \bigcup_{ j \in [N] } \bigcup_{ \mathcal{T} \in \mathbb{S}_{j,\alpha}} \mathsf{P}^{-1}( \mathcal{T}).
\end{align*}

\end{proof}

\subsection{Proof of Lemma \ref{lem:fern}} \label{sec:JC proofs}

In this section we prove the \emph{fern lemma}, Lemma \ref{lem:fern}, which was used in our combinatorial restatement of the Jacobian conjecture, Conjecture \ref{JC3}. 

\begin{proof}[Proof of Lemma \ref{lem:fern}]
Let $H$ be a polynomial mapping containing only terms of degree two or higher. The $(i,j)^{\text{th}}$ entry of the matrix $J(H)^m$ may be written 
\begin{align*}
G_{i,j} ( X) := \sum_{ k_0, \ldots, k_{m} \in [N] } \ind \{ k_0 = i, k_m = j \} \prod_{ l =1 }^m D_{ k_{l-1} }^{ \mathbf{e}_{k_l} } [H] (X).
\end{align*}
Clearly, since $H$ contains only degree two terms of higher, $G_{i,j}(X)$ contains only terms of degree $m$ or higher, and hence for $G_{i,j}(0) = 0$ whenever $m \geq 1$. 

The property that $J(H)^m$ is equal to the zero matrix on $\mathbb{C}^N$ is equivalent to $G_{i,j}(X) = 0$ for all $1 \leq i,j \leq N$ and all $X \in \mathbb{C}^N$. Equivalently, all derivatives of $G_{i,j}$ are also equal to zero, so that for every $\alpha \in \mathbb{Z}_{ \geq 0}^d$,
\begin{align} \label{debussy1}
0 = D^\alpha [ G_{i,j} ](X) =  \sum_{ k_0, \ldots, k_{m} \in [N] } \ind \{ k_0 = i, k_m = j \} D^\alpha \left[ \prod_{ l =1 }^m D_{ k_{l-1} }^{ \mathbf{e}_{k_l} } [H] \right] (X) .
\end{align}
Using the multivariate Leibniz rule \eqref{eq:leib2}, we see that
\begin{align} \label{debussy2}
D^\alpha \left[ \prod_{ l =1 }^m D_{ k_{l-1} }^{k_l } [H] \right] = \sum_{ S_1 \sqcup \ldots \sqcup S_m = [\alpha] } \prod_{ l =1 }^m D_{k_{l-1}}^{\mathbf{e}_{k_l} + \# S_l } [ H]. 
\end{align}
We now note that every tuple of numbers $(k_0,\ldots,k_m) \in [N]$ with $k_0 = i$ and $k_m = j$, together with any sequence of sets $(S_1,\ldots,S_m)$, gives rise to a fern $\mathcal{T}$ in $\mathrm{Fern}_{i,\alpha,j}(m)$ by letting the leaves with labels in $S_i$ be children of $v_{i-1}$, and letting the type of $v_i$ be $k_i$. Moreover, the energy of such a fern is given by 
\begin{align} \label{debussy3}
\mathcal{E}_H \left( \mathcal{T} \right) =  \prod_{ l =1 }^m D_{k_{l-1}}^{\mathbf{e}_{k_l} + \# S_l } [ H].
\end{align}
In particular, by plugging \eqref{debussy2} into \eqref{debussy1}, and using \eqref{debussy3}, we see that \eqref{debussy1} reads as saying 
\begin{align*}
\sum_{ \mathcal{T} \in \mathrm{Fern}_{i,\alpha,j}( m ) } \mathcal{E}_H \left( \mathcal{T} \right) = 0. 
\end{align*}
\end{proof}

\section{The non-commutative case} \label{sec:free proofs}
In this section, we shall present the non-commutative/free versions of our main results and the reader should recall the notation introduced in Section \ref{sec:free}. In Section \ref{sec:free fdb proof} we prove the non-commutative generalisation of Fa\`a di Bruno's formula, Theorem \ref{thm:free fdb}. In Section \ref{sec:free inversion proof} we prove the non-commutative inversion formulas, Theorems \ref{thm:free inversion} and \ref{thm:free inversion general}. 

\subsection{Proof of the free Fa\`a di Bruno formula}
 \label{sec:free fdb proof}

In analogy with proving the commutative version of Fa\`a di Bruno's formula, in working towards a proof of Theorem \ref{thm:free fdb}, we begin by considering the case $m =2$. 

The case $m=2$ of Theorem \ref{thm:free fdb} reads as saying that whenever $F,G$ are power series in $\fR^N_0$ with zero constant term, then the coefficient of $X_{\kappa_1} \ldots X_{\kappa_k}$ is given by 
\begin{align} \label{eq:free1}
(F \circ G)_{ i , \kappa } = \sum_{ \freeT \in \fF_{i,\kappa}(2) } F_{ i, \fmu (v_0 )} \prod_{ v \in V_1} G_{ \tau(v), \fmu(v) }.
\end{align}
We now prove this equation. 

\begin{proof}[Proof of equation \eqref{eq:free1}]
Note that if $F$ and $F'$ are elements of $\fR^N$, then
\begin{align*}
(F + F') \circ G = F \circ G + F' \circ G. 
\end{align*}
In particular, $(F \circ G)_{i, \kappa}$ depends linearly on $F$. Since the right-hand side of \eqref{eq:free1} also clearly depends linearly on the coefficients of $F$, it is sufficient to prove \eqref{eq:free1} for $F_i$ of the form
\begin{align} \label{eq:Fform}
F_i (X_1,\ldots,X_N ) = X_{\gamma_1} \ldots X_{\gamma_j}.
\end{align}
For $F$ of the form \eqref{eq:Fform}, $F_i \circ G$ is given by 
\begin{align*}
F_i \circ G(X_1,\ldots,X_N) = G_{\gamma_1}( X_1,\ldots,X_N) \ldots G_{ \gamma_j} (X_1,\ldots,X_N) .
\end{align*}
Clearly then the coefficient of $X_{\kappa_1} \ldots X_{ \kappa_k }$ in $(F_i \circ G)$ is given by 
\begin{align} \label{eq:kappasum}
(F \circ G)_{i,\kappa} = \sum_{ 1 \leq l_1 < l_2 < \ldots < l_{j-1} <  k } G_{\gamma_1, (\kappa_1,\ldots,\kappa_{l_1}) } G_{\gamma_2, (\kappa_{l_1 + 1}, \ldots, \kappa_{l_2} )} \ldots G_{ \gamma_j, (\kappa_{l_{j-1} + 1} , \ldots, \kappa_k ) }. 
\end{align}

\begin{figure}[ht!]
  \centering   
 \begin{forest}
[$i$, name=L1,
[$\gamma_1$,
[$\kappa_1$, name=A]
[$\ldots$]
[$\kappa_{l_1}$]
]
[$\gamma_2$,
[$\kappa_{l_1 + 1}$]
[$\ldots$]
[$\ldots$]
[$\ldots$]
[$\kappa_{l_2}$]
]
[$\ldots$
[$\ldots$]
[$\ldots$]
]
[$\gamma_j$
[$\small{\kappa_{l_{j-1}+1}}$]
[$\ldots$]
[$\ldots$]
[$\kappa_k$]
]
]
]
]
\node (a) [left=of L1] {($\mathsf{Planar}$)};
\end{forest}
  \caption{The coefficient of $X_{\kappa_1} \ldots X_{\kappa_k}$ in the $i^{\text{th}}$ component of $(F \circ G)$ may be given in terms of a sum over planar trees with two generations.}\label{fig:planar partitions}
\end{figure}
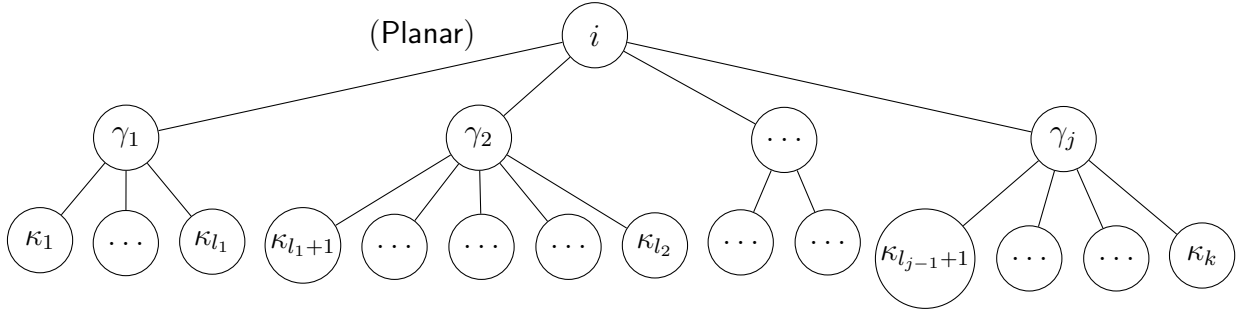

Now there is a bijection between every tuple $1 \leq l_1 < l_2 < \ldots < l_{j-1} < k$ the subset of $\fF_{i,\kappa}(2)$ consisting of the planar trees with $j$ vertices $\{v_1,\ldots,v_j\}$ in generation $1$ (listed in planar order) by letting the leaves with labels  $\{ l_{j-1}+1,\ldots,l_j \}$ be descended from $v_j$. Write $\mathcal{T} ( l_1,\ldots,l_{j-1})$ for the unique tree with this property. It follows that \eqref{eq:kappasum} may be written
\begin{align} \label{eq:star}
(F_i \circ G)_\kappa = \sum_{ \freeT \in \fF_{i,\kappa}(2)_\gamma } \prod_{ v \in V_1} G_{ \tau(v), \fmu(v) },
\end{align} 
where $\fF_{i,\kappa}(2)_\gamma$ is the subset of $\fF_{i,\kappa}(2)$ consisting of those trees with vertices $\{v_1,\ldots,v_j\}$ in generation $1$ with types $\gamma_1,\ldots,\gamma_j$.

In other words, if $F_i$ has the form \eqref{eq:Fform}, then $F_{i,\gamma} = 1$ and $F_{i,\gamma'}  = 0$ for all tuples $\gamma' \neq \gamma$, so that we may write \eqref{eq:star} as 
\begin{align*}
(F \circ G)_{i, \kappa} = \sum_{ \freeT \in \fF_{i,\kappa}(2) } F_{i,  \fmu(v_0) } \prod_{ v \in V_1} G_{\tau(v), \fmu(v)},
\end{align*}
which amounts to \eqref{eq:free1} for $F_i(X_1,\ldots,X_k) = X_\gamma$, which by linearity, implies $\eqref{eq:free1}$ for general $F$. 

\end{proof}

We now prove Theorem \ref{thm:free fdb} by induction on $m$. Since the proof bears a great deal of similarity with our proof in the analogous commutative case, we will provide less detail here than we did in the commutative case.

\begin{proof}[Proof of Theorem \ref{thm:free fdb}]
We proceed by induction on $m$ by using the $m=2$ case proved above to prove the inductive step. 
Suppose we have a sequence $(F^{(1)},\ldots,F^{(m+1)}$) of elements of $\fR^N_0$. Then by letting 
$F = F^{(1)}$ and $G = (F^{(2)} \circ \ldots \circ F^{(m+1)})$ in \eqref{eq:free1}, we obtain
\begin{align} \label{eq:orange1}
(F^{(1)} \circ \ldots \circ F^{(m+1)})_{i ,\kappa}  = \sum_{ \mathcal{T} \in \fF_{i,\kappa} (2) } F^{(1)}_{ i, \fmu(v_0) } \prod_{ v \in V_1 } \left( F^{(2)} \circ \ldots \circ F^{(m+1)} \right)_{ \tau(v) , \fmu(v) } .
\end{align}
By the inductive hypothesis, we may expand each term
\begin{align} \label{eq:orange2}
 \left( F^{(2)} \circ \ldots \circ F^{(m+1)} \right)_{ \tau(v) , \fmu(v) } = \sum_{ \mathcal{T}_v \in \mathbb{F}_{\tau(v)}^{ \fmu(v) } (m) } \mathcal{E}_{ (F^{(2)},\ldots,F^{(m+1)} ) } (\freeT_v) .
\end{align}
Suppose now that $\freeT$ is a tree in $\fF_{i,\kappa}(2)$, such that the vertices in generation $1$ have types and free outdegrees $\{ (\tau(v) , \fmu(v) ) : v \in V_1 \}$. Given a collection of trees $\left( \freeT_v : v \in V_1 \right)$ such that each $\freeT_ v$ is an element of $\fF_{\tau(v),\fmu(v)} (m)$, we may create a tree $\mathcal{S} = \mathrm{Tree} \left(\freeT; \freeT_v : v \in V_1(\freeT ) \right)$ in $\fF_{i,\kappa}(m+1)$ by ``glueing" each $\mathcal{T}_v$ to the vertex $v$ in $\freeT$. Moreover, the $F^* = (F^{(1)},\ldots,F^{(m+1)})$-energy of the resulting tree is given by 
\begin{align*}
\mathcal{E}_{F^*}( \mathcal{S} ) = \prod_{ l=1}^{m+1} \prod_{v \in V_{l-1} } F^{(l)}_{\tau(v), \fmu(v)} = F_{i, \fmu(v_0) } \prod_{ v \in V_1 } \mathcal{E}_{ (F^{(2)},\ldots,F^{(m+1)} )} (\freeT_v).
\end{align*}
In particular, since every tree $\mathcal{S}$ has a unique decomposition into trees $\left( \freeT_v : v \in V_1(\mathcal{S}) \right)$ it follows that by plugging \eqref{eq:orange2} into \eqref{eq:orange1} that 
\begin{align*}
(F^{(1)} \circ \ldots \circ F^{(m+1)})_{i ,\kappa}  &= \sum_{ \mathcal{T} \in \fF_{i,\kappa} (2) } F^{(1)}_{ i, \fmu(v_0) } \prod_{ v \in V_1 } \left( F^{(2)} \circ \ldots \circ F^{(m+1)} \right)_{ \tau(v) , \fmu(v) } \\
&= \sum_{ \freeT \in \fF_{i,\kappa} (2) } F^{(1)}_{ i, \fmu(v_0) } \prod_{ v \in V_1 }\sum_{ \freeT_v \in \fF_{\tau(v),\fmu(v)} (m) } \mathcal{E}_{ (F^{(2)},\ldots,F^{(m+1)} )} ( \freeT_v )\\
&= \sum_{ \freeU \in \fF_{i,\kappa}(m+1) } \mathcal{E}_{ (F^{(1)},\ldots,F^{(m+1)} )} (\freeU ).
\end{align*}
This shows that the formula also holds for $m + 1$. 
\end{proof}

\subsection{Proof of the free inversion theorem}
 \label{sec:free inversion proof}

In this section we prove Theorem \ref{thm:free inversion}, which gives a formula for the compositional inverse of a formal power series in non-commutative indeterminates. Recall that in Section \ref{sec:inversion proof} we gave two proofs of the commutative analogue Theorem \ref{thm:inversion} of Theorem \ref{thm:free inversion} --- the first constructive and the latter a simple verification. 

In terms of proving Theorem \ref{thm:free inversion} here, we only give a verification proof, since a constructive argument may be intimated from paralleling the constructive argument in the commutative case.

\begin{proof}[Proof of Theorem \ref{thm:free inversion}]
Suppose we are in the context of Theorem \ref{thm:free inversion}. Namely, suppose $F$ and $G$ are elements of $\fR^N_0$ such that each component $F_i$ of $F$ has the form
\begin{align*}
F_{ i } (X_1,\ldots,X_N) = X_i - \sum_{ k \geq 2} \sum_{ \kappa \in [N]^k} H_{i, \kappa} X_\kappa,
\end{align*}
and each component of $G$ has the form
\begin{align*}
G_i ( X_1,\ldots,X_N)  = X_ i + \sum_{ k \geq 2} \sum_{ \kappa \in [N]^k } G_{i,\kappa} X_\kappa,
\end{align*}
where
\begin{align*}
G_{ i, \kappa} = \sum_{ \freeT \in \fS_{i,\kappa}} \mathcal{E}_H \left( \freeT\right).
\end{align*}
We now use the special case $m=2$ of the free Fa\`a di Bruno formula \eqref{eq:free1} to verify that both $F \circ G$ and $G \circ F$ are equal to the identity power series $I$. 

We begin with $F \circ G$. For $j \in [N]$, by letting $\kappa$ be the $1$-tuple $\kappa = (j)$ in \eqref{eq:free1}, we see that the coefficient of $X_j$ in the $i^{\text{th}}$ component of $(F \circ G)$ is given by 
\begin{align*}
(F \circ G)_{ i , j } = \sum_{ k \in [N] } F_{i , k } G_{k , j } = \ind_{ i =j}.
\end{align*}
We now show that whenever $k \geq 2$, for any $\kappa \in [N]^k$, the coefficient of $X_\kappa$ in the $i^{\text{th}}$ component of $(F \circ G)$ is zero. Indeed, by \eqref{eq:free1}
\begin{align} \label{giraffe}
(F \circ G)_{ i , \kappa} = \sum_{ \freeT \in \fF_{i,\kappa}(2) } F_{ i , \fmu(v_0) } \prod_{ v \in V_1 } G_{ \tau(v) , \fmu(v) }  = G_{i,\kappa } -  \sum_{ \freeT \in \fF_{i,\kappa}(2) : |V_1| > 2 } H_{i, \fmu(v_0)} \prod_{ v \in V_1 }  G_{ \tau(v) , \fmu(v) }.
\end{align}
We now note that each $G_{ \tau(v), \fmu(v) }$ may be expended in terms of a sum over trees in $\fS_{ \tau(v), \fmu(v) }$. In particular, 
\begin{align} \label{elephant}
\sum_{ \freeT \in \fF_{i,\kappa}(2) : |V_1| > 2 } H_{i, \fmu(v_0)} \prod_{ v \in V_1 }  G_{ \tau(v) , \fmu(v) } &= \sum_{ \freeT \in \fF_{i,\kappa}(2) : |V_1| > 2 }  \sum_{ \left( \freeT_v \right)_{v \in V_1} ,\freeT_v \in \fS_{ \tau(v), \fmu(v) } } H_{i, \fmu(v_0)} \prod_{ v \in V_1 }  \mathcal{E}_H \left(\freeT_v \right).
\end{align}
However, for each term in the summand on the right-hand side, we may write 
\begin{align*}
\mathcal{E} \left( \freeT^* \right) = H_{i, \fmu(v_0)} \prod_{ v \in V_1 }  \mathcal{E}_H \left( \freeT_v \right),
\end{align*}
where $\freeT^*$ is the tree in $\fS_{i, \kappa}$  obtained by ``glueing" each $\freeT_v$ to $v$. Moreover, each tree $\freeT^*$ in $\fS_{i,\kappa}$ is obtained uniquely in this from a tree $\freeT \in \fF_{i,\kappa}(2)$, and a collection $(\freeT_v)$ of trees such that each $\freeT_v$ is contained in $\fS_{\tau(v), \fmu(v)}$. 

In particular, \eqref{elephant} reads as saying
\begin{align}
\sum_{ \freeT \in \fF_{i,\kappa}(2) : |V_1| > 2 } H_{i, \fmu(v_0)} \prod_{ v \in V_1 }  G_{ \tau(v) , \fmu(v) } = \sum_{ \freeT^* \in \fS_{i,\kappa} } \mathcal{E}_H \left( \freeT^*\right),
\end{align}
so that by \eqref{giraffe}, $(F\circ G)_{i, \kappa} = 0$. The proof that $G \circ F$ is equal to the identity is similar, and left to the reader. 
\end{proof}

We now turn to proving Theorem \ref{thm:free inversion general}, concerning inverses of power series with non-identical linear terms in $N$ free indeterminates. Since the proof of Theorem \ref{thm:free inversion general} is extremely similar to our proof of its commutative counterpart, Theorem \ref{thm:general inversion}, we shall only sketch the main details. 

\begin{proof}[Proof of Theorem \ref{thm:free inversion general}]
Let $F$ be a free power series in $\fR^N$ with components of the form
\begin{align*}
F_i(X_1,\ldots,X_N) = \sum_{ j \in [N] } P_{i,j} X_j  - \sum_{ k \geq 2} \sum_{ \kappa \in [N]^k } H_{i, \kappa} X_\kappa, \qquad i \in [N].
\end{align*}
Now consider the element $\widetilde{F}$ of $\fR^N$ defined by setting $\widetilde{F} = F \circ Q$. It is easily verified using \eqref{eq:free1} that $\widetilde{F}$ has components of the form 
\begin{align*}
\widetilde{F}_i(X_1,\ldots,X_N) = X_i  - \sum_{ k \geq 2} \sum_{ \kappa \in [N]^k } \widetilde{H}_{i, \kappa} X_\kappa, \qquad i \in [N].
\end{align*}
where $\widetilde{H} = H \circ Q$. In particular, by Theorem \ref{thm:free inversion} the inverse $\widetilde{G}$ exists and has components of the form
\begin{align*}
\widetilde{G}_i(X_1,\ldots,X_N )= X_i + \sum_{ k \geq 2} \sum_{ \kappa \in [N]^k } G_{i,\kappa } X_\kappa,
\end{align*}
where
\begin{align*}
\widetilde{G}_{i, \kappa} = \sum_{ \freeT \in \fS_{i,\kappa}} \mathcal{E}_{\widetilde{H}}(\freeT ).
\end{align*}
It is easily verified now that $G := Q \circ \widetilde{F}$ is inverse to $F$. Moreover, in a planar expansion identical to its non-planar counterpart in the proof of Theorem \ref{thm:general inversion}, we see that the coefficients of $G$ are given by 
\begin{align*}
G_{i,\kappa} = \sum_{ \freeT \in \fA_{i,\kappa} } \mathcal{E}^{\mathsf{alt}}_{H,Q}( \freeT ),
\end{align*}
as required.
\end{proof}

\subsection*{Acknowledgments}  
The authors are extremely grateful to an anonymous referee whose suggestions have greatly improved this article. We would also like to thank Michael Anshelevich for directing us towards several useful references.

SJ and JP have been supported by the Austrian Science Fund (FWF) Project P32405 ``Asymptotic Geometric Analysis and Applications'' of which JP is principal investigator. JP has also been supported by a \textit{Visiting International Professor Fellowship} from the Ruhr University Bochum and its Research School PLUS.

Finally, we gratefully acknowledge the support of the Oberwolfach Research Institute for Mathematics, where several discussion were held during the workshop ``New Perspectives and Computational Challenges in High Dimensions'' (Workshop ID 2006b).

\end{document}